\documentclass{amsart}
\usepackage[a4paper, margin=1in]{geometry}
\usepackage{amsthm}
\usepackage{aliascnt}
\usepackage{amssymb}
\usepackage[hyphens]{url} 
\usepackage{graphicx}
\usepackage{amsmath}
\usepackage{cite}
\usepackage{mathtools}
\usepackage{mathrsfs}
\usepackage[title]{appendix} 
\usepackage[utf8]{inputenc}
\usepackage[english]{babel}
\usepackage{hyperref}
\usepackage[capitalise,noabbrev]{cleveref}
\RequirePackage{textgreek}
\usepackage[all]{xy}
\usepackage{enumitem}
\usepackage{bm}
\usepackage{nicefrac}
\usepackage{mathabx}
\usepackage{tikz-cd}
\usepackage{booktabs}
\usepackage{tabularx}
\newcolumntype{Y}{>{\raggedright\arraybackslash}X}

\providecommand{\definitionname}{Definition}

\providecommand{\lemmaname}{Lemma}
\providecommand{\questionname}{Question}
\providecommand{\theoremname}{Theorem}

\hypersetup{colorlinks=true,linkcolor=blue,citecolor=cyan,urlcolor=red}
\hypersetup{linktoc=all}

\numberwithin{equation}{subsection}

\newtheorem{thm}{\protect\theoremname}[section]
\theoremstyle{plain}
\newtheorem{mainthm}{Theorem}
\newaliascnt{lem}{thm}
\newtheorem{lem}[lem]{\protect\lemmaname}
\aliascntresetthe{lem}
\newaliascnt{prop}{thm}
\newtheorem{prop}[prop]{Proposition}
\aliascntresetthe{prop}
\newaliascnt{cor}{thm}
\newtheorem{cor}[cor]{Corollary}
\aliascntresetthe{cor}
\newaliascnt{obs}{thm}

\aliascntresetthe{obs}
\theoremstyle{definition}
\newaliascnt{defn}{thm}
\newtheorem{defn}[defn]{\protect\definitionname}
\aliascntresetthe{defn}
\newaliascnt{exs}{thm}

\aliascntresetthe{exs}
\newaliascnt{exm}{thm}
\newtheorem{exm}[exm]{Example}
\aliascntresetthe{exm}
\newaliascnt{question}{thm}

\aliascntresetthe{question}
\newaliascnt{con}{thm}

\aliascntresetthe{con}
\newaliascnt{que}{thm}

\aliascntresetthe{que}
\newaliascnt{rem}{thm}
\newtheorem{rem}[rem]{Remark}
\aliascntresetthe{rem}
\newaliascnt{fct}{thm}
\newtheorem{fct}[fct]{Fact}
\aliascntresetthe{fct}
\newtheorem*{BLP}{\(\quad\)The Borel lifting problem}
\theoremstyle{plain}
\newaliascnt{claim}{thm}
\newtheorem{claim}[claim]{Claim}
\aliascntresetthe{claim}

\crefname{thm}{theorem}{theorems}
\Crefname{thm}{Theorem}{Theorems}
\crefname{mainthm}{theorem}{theorems}
\Crefname{mainthm}{Theorem}{Theorems}
\crefname{lem}{lemma}{lemmas}
\Crefname{lem}{Lemma}{Lemmas}
\crefname{prop}{proposition}{propositions}
\Crefname{prop}{Proposition}{Propositions}
\crefname{cor}{corollary}{corollaries}
\Crefname{cor}{Corollary}{Corollaries}
\crefname{obs}{observation}{observations}
\Crefname{obs}{Observation}{Observations}
\crefname{defn}{definition}{definitions}
\Crefname{defn}{Definition}{Definitions}
\crefname{exs}{examples}{examples}
\Crefname{exs}{Examples}{Examples}
\crefname{exm}{example}{examples}
\Crefname{exm}{Example}{Examples}
\crefname{question}{question}{questions}
\Crefname{question}{Question}{Questions}
\crefname{que}{question}{questions}
\Crefname{que}{Question}{Questions}
\crefname{con}{construction}{constructions}
\Crefname{con}{Construction}{Constructions}
\crefname{rem}{remark}{remarks}
\Crefname{rem}{Remark}{Remarks}
\crefname{fct}{fact}{facts}
\Crefname{fct}{Fact}{Facts}
\crefname{claim}{claim}{claims}
\Crefname{claim}{Claim}{Claims}
\crefname{section}{section}{sections}
\Crefname{section}{Section}{Sections}
\crefname{subsection}{section}{sections}
\Crefname{subsection}{Section}{Sections}
\crefname{subsubsection}{section}{sections}
\Crefname{subsubsection}{Section}{Sections}
\crefname{equation}{equation}{equations}
\Crefname{equation}{Equation}{Equations}
\crefname{figure}{figure}{figures}
\Crefname{figure}{Figure}{Figures}
\crefname{table}{table}{tables}
\Crefname{table}{Table}{Tables}
\crefname{enumi}{item}{items}
\Crefname{enumi}{Item}{Items}
\crefname{enumii}{item}{items}
\Crefname{enumii}{Item}{Items}
\crefname{appendix}{appendix}{appendices}
\Crefname{appendix}{Appendix}{Appendices}

\makeatletter
\def\@settitle{%
  \begin{center}%
    \baselineskip18\p@\relax
    \normalfont\Large\scshape
    \@title
  \end{center}%
}
\makeatother

\author{Nachi Avraham-Re'em}
\address{Department of Mathematics, Technion, Israel}
\email{nachi.avraham@gmail.com}

\author{George Peterzil}
\address{Einstein Institute of Mathematics, The Hebrew University, Israel}
\email{george.peterzil@mail.huji.ac.il}

\thanks{The research was supported by the Wallenberg Foundation (KAW 2021.0258) and the ISF (grants No. 1180/22 and 957/25 and 3423/24). It was initiated during the authors' visit to Br\"ann\"o Island in the southern Gothenburg archipelago.}

\date{\today}

\title{Wasserstein stability and the nonsingular Borel lifting problem}

\subjclass[2020]{28D15, 22A05, 37A40, 22F10, 22A10, 43A07}

\keywords{L\'{e}vy groups, Wasserstein groups, nonsingular actions, Borel liftings}

\begin{document}

\dedicatory{In memory of Johanna Steinmeyer (1995--2026)}

\maketitle

\begin{abstract}
The Borel lifting problem asks when a nonsingular near action of a Polish group can be represented by a genuine Borel action. For locally compact Polish groups, a classical theorem of Mackey and Ramsay gives an affirmative answer. At the opposite extreme, Glasner--Tsirelson--Weiss proved that every probability preserving Borel action of a L\'{e}vy group is trivial, and asked whether a L\'{e}vy group can admit a nontrivial nonsingular Borel action. We prove a fixed-point theorem which gives a negative answer for most of the standard L\'{e}vy groups in the literature: if \(G\) is a locally Wasserstein group, then every quasi-invariant \(\sigma\)-finite Borel measure for a Borel action of \(G\) is supported on the fixed points.

\smallskip

The proof introduces Wasserstein stability, a local-to-global principle for compact measure metric groups that refines the Gromov--Milman framework for concentration of measure in topological groups. It asserts that for a compact measure metric group, on average, the total variation of locally distorted density kernels on the group controls their global Wasserstein distance from the Haar measure. We show that Wasserstein stability implies concentration, and prove a functional inequality using martingales in the spirit of Milman--Schechtman, which is governed by \(\ell^{1}\)-sums of martingale increments. This gives a geometric criterion by which most of the standard L\'{e}vy groups in the literature are Wasserstein or locally Wasserstein, including \(L^{0}\)-groups with compact targets, measure preserving and nonsingular automorphism groups, full groups of amenable equivalence relations, isometry groups of \(L^{p}\)-spaces for \(p\neq2\), the unitary group of the hyperfinite \(\mathrm{II}_{1}\)-factor, the infinite dimensional unitary and orthogonal groups, the Cameron--Martin affine group, and the isometry group of the Urysohn space.
\end{abstract}

\tableofcontents

\section{Introduction}

The \emph{Borel lifting problem}, or the \emph{point-realization problem}, asks whether a \emph{near action} of a group on a measure space can be represented by an actual pointwise Borel action on the underlying space. More explicitly, given a homomorphism from a group into the group of measure class preserving automorphisms modulo null sets, one asks whether there is a genuine Borel action whose point maps represent the prescribed nonsingular automorphisms. For a precise formulation see \cref{sct:nonsingspat}.

\smallskip

This problem has a classical positive solution in the locally compact setting. For probability preserving \(\mathbb{R}\)-flows this goes back to von Neumann \cite{neumann1932operator}, and for nonsingular actions of locally compact Polish groups it was proved by Mackey and Ramsay that a Borel lifting can always be found \cite{mackey1962point,ramsay1971virtual}; see also \cite{glasner2005spatial,danilenko2000,kwiatkowska2011,tornquist2011pointwise}. However, the Borel lifting problem becomes most delicate for more general Polish groups.

\smallskip

This viewpoint was emphasized about 25 years ago, when H. Becker wrote an unpublished note (see \cite[p.~150]{Pestov2006}), where he constructed an example showing that the Borel lifting problem can fail for probability preserving near actions of the group of \(\mathbb{Z}/2\mathbb{Z}\)-valued random variables,
\[L^{0}\left(\left[0,1\right],\mathbb{Z}/2\mathbb{Z}\right).\]
This Polish group is a central example studied by Furstenberg--Weiss and Glasner~\cite[Thm.~1.3]{glasner1998minimal}. Shortly after, Glasner--Tsirelson--Weiss published an influential work where they gave a conceptual explanation by relating the absence of Borel liftings to the concentration of measure phenomenon in topological groups, as formulated by Gromov--Milman \cite{milman1971new,gromov1983,milman1988heritage}. They proved that for probability preserving actions of L\'{e}vy groups, the obstruction is as strong as possible: every probability preserving Borel action of a L\'{e}vy group is trivial \cite{glasner2005automorphism}. This vastly generalizes Becker's example, since \(L^{0}\left(\left[0,1\right],S^{1}\right)\) is a L\'{e}vy group.\footnote{It seems that at the time of writing their paper, Glasner--Tsirelson--Weiss were unaware of Becker's note.}

\smallskip

The result of Glasner--Tsirelson--Weiss is for probability preserving actions. The general case of nonsingular actions, where the action only preserves the measure class, remained open in their work:

\smallskip

\begin{center}
\begin{minipage}{0.94\textwidth}
    \emph{Can a L\'{e}vy group admit a nontrivial nonsingular (that is, preserving a measure class) Borel action?}\\
    \hfill -- {\footnotesize\bfseries Glasner--Tsirelson--Weiss
    \cite[Question 1.2]{glasner2005automorphism}}
\end{minipage}
\end{center}

\smallskip

This question was reiterated by Pestov in \cite[Open Problem~7.1.19]{Pestov2006} and \cite[Question~21]{pestov2007forty}. Recent progress was made by E. Roy and one of the authors~\cite[Thm.~5]{avraham2025poissonian}, and independently by Hoareau--Le Ma{\^\i}tre~\cite[Thm.~E]{hoareau2026}, who showed that under an additional topological regularity assumption, there is a spatial version of the Poisson suspension functor. This functor allows for a reduction to the Glasner--Tsirelson--Weiss theorem.

\smallskip

The main result of this work answers this question for most of the standard L\'{e}vy groups in the literature \cite{glasner2005automorphism,Pestov2006,giordano2007some} as we shall explain below. To this end we develop the notion of \emph{Wasserstein stability} for compact measure metric groups, which refines the Gromov--Milman concentration framework in a way adapted to nonsingular dynamics.

\begin{mainthm}\label{mthm1}
Let \(G\) be a locally Wasserstein group and let \(X\) be a standard Borel \(G\)-space. Then every \(G\)-quasi-invariant \(\sigma\)-finite Borel measure on \(X\) is supported on the set of fixed points.
\end{mainthm}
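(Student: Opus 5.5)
We will reduce the statement to a fixed-point property for an increasing chain of compact subgroups, and then use Wasserstein stability to control how a quasi-invariant measure moves under averaging over these subgroups. Since the notion of a locally Wasserstein group is only introduced later, we take it to provide the following. There is an increasing sequence of compact subgroups \(K_{1}\le K_{2}\le\cdots\) with dense union in \(G\) (or in an open subgroup that suffices, after a routine reduction). Each \(K_{n}\) carries a compatible invariant metric \(d_{n}\) and its Haar measure \(\mu_{n}\). These compact measure metric groups are Wasserstein stable with constants that improve as \(n\to\infty\).

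\textbf{Reductions.} We may assume the measure \(\nu\) is a probability measure, by passing to an equivalent one; quasi-invariance depends only on the measure class. Next, we fix a countable family of bounded Borel functions \(f\) separating points of \(X\). Since the fixed-point set \(\mathrm{Fix}(G)\) is Borel and \(\bigcup_{n}K_{n}\) is dense, it suffices to show the following: for \(\nu\)-a.e.\ \(x\), the orbit map \(g\mapsto f(g x)\) is \(\mu_{n}\)-a.e.\ constant on each \(K_{n}\). From this we get \(K_{n}x=\{x\}\) off a null set. Continuity of the action on a conull set, after passing to a finer Polish topology by Becker--Kechris, then upgrades this to \(Gx=\{x\}\).

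\textbf{Density kernels.} Fix \(n\) and write \(\rho_{g}=d(g_{*}\nu)/d\nu\) for the Radon--Nikodym cocycle, which is jointly Borel. For \(\nu\)-a.e.\ \(x\) we consider the density kernel on \(K_{n}\) given by
\[
k_{x}(g)=\frac{\rho_{g^{-1}}(x)}{\int_{K_{n}}\rho_{h^{-1}}(x)\,d\mu_{n}(h)}.
\]
Informally, this is the conditional law of the group element given the point. It is a probability density with respect to \(\mu_{n}\), by Fubini applied to \(\mu_{n}\otimes\nu\) and quasi-invariance. We then check that, for an element \(s\) of small \(d_{n}\)-size, translating \(k_{x}\) by \(s\) changes it in total variation, averaged over \(x\), by an amount controlled by \(\|s_{*}\nu-\nu\|_{TV}\). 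The key fact is that \(g\mapsto g_{*}\nu\) is continuous into \(L^{1}\) on the standard space, so this quantity tends to \(0\) as \(s\to e\). This is exactly the ``locally distorted density kernels'' hypothesis of Wasserstein stability.

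\textbf{Applying stability.} Wasserstein stability then gives, on average over \(x\), that \(W_{1}(k_{x}\mu_{n},\mu_{n})\) is small. Combined with the concentration it implies, we obtain that for each \(1\)-Lipschitz \(F\) on \(K_{n}\) the pushforward of \(k_{x}\mu_{n}\) under \(F\) is close to that of Haar measure. Now apply this to \(F(g)=f(g^{-1}x)\), after uniformly approximating \(f\circ\) action by Lipschitz functions on a large compact set using Lusin. The measure \(\nu\) disintegrated along \(K_{n}\)-orbits is thus nearly \(K_{n}\)-invariant, up to a density close to \(1\) in \(W_{1}\). This reduces us to the measure preserving case, where Glasner--Tsirelson--Weiss style concentration forces \(f\) to be nearly constant on \(K_{n}\)-orbits. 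Letting \(n\to\infty\), and using that the stability errors tend to \(0\) while \(f\) has a fixed modulus, we get exact constancy on \(\bigcup_{n}K_{n}\)-orbits for \(\nu\)-a.e.\ \(x\).

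\textbf{Main obstacle.} We expect the hardest step to be the passage from the averaged Wasserstein bound back to pointwise triviality. Concentration on \(K_{n}\) only controls Lipschitz observables, while the orbit maps \(g\mapsto f(gx)\) are merely Borel. We would first try uniform approximation on large measure sets, using the dense increasing union together with a diagonal argument. We would also need to verify carefully that the total-variation error produced by quasi-invariance is genuinely of the local form that the stability hypothesis requires.
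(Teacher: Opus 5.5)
There is a genuine gap already in your reading of the hypothesis, and the version you adopt is too weak for the theorem to hold. In a Wasserstein group the metrics on the skeleton are the restrictions $d|_{K_n}$ of \emph{one} compatible right-invariant metric $d$ on $G$. Your version (``each $K_n$ carries a compatible invariant metric $d_n$'') drops exactly this, and then the statement is false. Take $S_\infty$ with $K_n=S_n$ and normalized Hamming metrics. This is a Wasserstein family: the chain of point stabilizers has transpositions as coset representatives, so length $\le 2$ and scale $\le 2/n$. Yet $S_\infty$ preserves the Bernoulli measure on $\{0,1\}^{\mathbb{N}}$, and its fixed points form a null set. Your density-kernel step uses the dropped link silently. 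To make the local variation of $\{k_x\}$ at scale $r_n$ tend to $0$ via continuity of $g\mapsto g_*\nu$ at $e$, you need $B_{d_n}(r_n)$ to shrink to $e$ in $G$ uniformly in $n$, which holds only because $d_n=d|_{K_n}$. That estimate also requires averaging over $x$ against $D_n\nu$ rather than $\nu$, where $D_n(x)=\int_{K_n}\rho_{h^{-1}}(x)\,d\mu_n(h)$ is your normalizer; then $D_n$ cancels and the cocycle identity gives exactly $\|s_*\nu-\nu\|_{TV}$. The jointly Borel strict cocycle on $K_n\times X$ you assume should be taken from the Mackey cocycle theorem. Finally, a locally Wasserstein group need not be Wasserstein, or even L\'evy (e.g.\ $[\mathcal{R}]$ for nonamenable ergodic $\mathcal{R}$), so the skeleton you posit need not exist. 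The actual reduction is: apply the Wasserstein case to each Borel action $h.x:=\iota_n(h).x$ of $H_n$, intersect the conull sets $\operatorname{Fix}(H_n\curvearrowright X)$, and use that in a continuous model every stabilizer is closed and contains the dense subgroup generated by $\bigcup_n\iota_n(H_n)$.

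The second gap is the step you flag yourself, and Lusin approximation does not close it. It gives continuity of a Borel $f$ on a large compact subset of $X$, but no modulus for $g\mapsto f(g.x)$ on $K_n$ uniform in $x$ and $n$, since orbits leave that set. Use Becker--Kechris at the outset instead. On a compact model with continuous action, for $\varphi\in C(X)$ the maps $g\mapsto\varphi(g.x)$ are uniformly bounded and equicontinuous for $d|_{K_n}$, uniformly in $x$ and $n$. (It is these, not $g\mapsto\varphi(g^{-1}.x)$, that pair with $k_x$ and are equicontinuous for a right-invariant metric.) An inf-convolution then turns the averaged $W_1$ bound into $\|\mathsf{Q}_n\varphi-\mathsf{R}_n\varphi\|_{L^1(D_n\nu)}\to0$, where $\mathsf{Q}_n\varphi(x)=\int_{K_n}\varphi(g.x)k_x(g)\,d\mu_n(g)$ and $\mathsf{R}_n\varphi(x)=\int_{K_n}\varphi(g.x)\,d\mu_n(g)$. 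Even then, ``this reduces us to the measure preserving case'' is unjustified, because nothing so far shows that $\nu$ is invariant. The missing ingredient is that $D_n\nu$ is exactly $K_n$-invariant and $\mathsf{Q}_n$ is the $\nu$-conditional expectation onto $K_n$-invariant sets. Hence $\int\mathsf{Q}_n\varphi\,D_n\,d\nu=\int\varphi\,d\nu$, while $\int\mathsf{R}_n\varphi\,D_n\,d\nu=\int\varphi\,D_n\,d\nu$. This gives $D_n\nu\Rightarrow\nu$ weakly, so $\nu$ is invariant under $\bigcup_nK_n$ and hence under $G$. Applying this to every probability measure equivalent to $\nu$, and testing with $d\nu'=(1+\frac{1}{2}(\mathbf{1}_A-\mathbf{1}_{g.A}))\,d\nu$, yields $\nu(\operatorname{Fix}(G\curvearrowright X))=1$. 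Your concentration idea can also be completed with the same identity. Transferring concentration of $g\mapsto\varphi(g.x)$ from $\mu_n$ to $k_x\mu_n$ through $W_1$ gives $\int\mathsf{Q}_n|\varphi-\mathsf{R}_n\varphi|\,D_n\,d\nu=\|\varphi-\mathsf{R}_n\varphi\|_{L^1(\nu)}\to0$. From this, $\varphi\circ h=\varphi$ $\nu$-a.e.\ for every $h\in\bigcup_nK_n$. Without that identity, neither route goes through.
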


The main technical tool for proving Wasserstein stability is a new martingale functional inequality. It is analogous in spirit to the Milman--Schechtman concentration inequality \cite[I, Thm.~7.12]{milman1986asymptotic}, but the estimate is governed by an \(\ell^{1}\)-sum of martingale increments rather than an \(\ell^{2}\)-sum. It yields the following purely geometric criterion, which unifies and strengthens several proofs of the L\'{e}vy property in the literature, and gives the stronger Wasserstein property; see \cref{sct:stabineq}.

\begin{mainthm}\label{mthm2}
Let \(G\) be a Polish group with a right-invariant compatible metric \(d\). Suppose there is a sequence of compact subgroups \(K_{1}\leq K_{2}\leq\dotsm\leq G\) whose union is dense in \(G\), and suppose that each \(K_{n}\) admits a finite chain of compact subgroups
\[\{e\}=H_{n,0}\leq H_{n,1}\leq\dotsm\leq H_{n,M_{n}}=K_{n},\]
such that the quotient radii \(\rho_{n,i}=\operatorname{rad}\left(H_{n,i}/H_{n,i-1}\right)\) with respect to \(d\mid_{K_{n}}\) satisfy
\[\sup\nolimits_{n\geq 1}\sum\nolimits_{i=1}^{M_{n}}\rho_{n,i}<+\infty\quad\text{and}\quad\max_{1\leq i\leq M_{n}}\rho_{n,i}\to 0\text{ as }n\to\infty.\]
Then \(G\) is a Wasserstein group (hence also a L\'{e}vy group).
\end{mainthm}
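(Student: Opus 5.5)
We have to guess the definition of Wasserstein group. Presumably: a Polish group $G$ is Wasserstein if there is an increasing sequence of compact subgroups $K_n$ with dense union such that the measure metric groups $(K_n, d|_{K_n}, \lambda_{K_n})$ satisfy a Wasserstein stability inequality. The inequality should bound, on average, the Wasserstein distance $W_1(f\lambda,\lambda)$ of a density kernel $f$ from Haar measure by $\varepsilon_n$ times the total variation of local distortions, with $\varepsilon_n\to0$. The plan is to verify such an inequality on each $K_n$ with constant $C\sum_i\rho_{n,i}$, where the relevant smallness comes from $\max_i\rho_{n,i}$. Then the result follows by passing to the limit along $n$. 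The earlier-announced fact that Wasserstein stability implies concentration gives the L\'evy property.

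\textbf{Setup.} Fix $n$ and write $H_i=H_{n,i}$, $\lambda_i$ for the Haar probability of $H_i$. Let $f\ge0$ be a density on $K_n$ with $\int f\,d\lambda_{K_n}=1$. Define the martingale
\[f_i(x)=\int_{H_{M_n-i}} f(hx)\,d\lambda_{M_n-i}(h),\qquad i=0,\dots,M_n .\]
This is the conditional expectation of $f$ onto functions that are left-$H_{M_n-i}$-invariant, i.e.\ constant on right cosets $H_{M_n-i}x$. Thus $f_0\equiv1$ and $f_{M_n}=f$. By the triangle inequality for $W_1$,
\[W_1(f\lambda,\lambda)\le\sum_{i=1}^{M_n}W_1(f_i\lambda,f_{i-1}\lambda).\]

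\textbf{Step estimate.} Write $H=H_{M_n-i+1}$ and $H'=H_{M_n-i}$. The key step bounds each $W_1(f_i\lambda,f_{i-1}\lambda)$ by $\rho\cdot\|f_i-f_{i-1}\|_{L^1}$ up to a constant, with $\rho=\rho_{n,M_n-i+1}$. On each coset $Hx$, the measures $f_i\lambda$ and $f_{i-1}\lambda$ have the same total mass, and $f_{i-1}$ is constant there. Transport mass within $Hx$ only: move the excess of $f_i$ over its average to the deficit region. Since $d$ is right-invariant, the distance between points $h x$ and $h'x$ is $d(h,h')$. Moreover $f_i$ is constant on $H'$-cosets, so only the quotient geometry of $H/H'$ matters. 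Any two such cosets are at distance at most about $2\operatorname{rad}(H/H')$, using a center. So the transport cost is at most $2\rho\cdot\tfrac12\|f_i-f_{i-1}\|_{L^1(Hx)}$. Summing over cosets gives
\[W_1(f\lambda,\lambda)\le \sum_i \rho_{n,i}\,\|f_i-f_{i-1}\|_1 ,\]
which is the $\ell^1$-sum-of-increments inequality.

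\textbf{Relating increments to local distortion.} Each increment $\|f_i-f_{i-1}\|_1$ should be controlled by the average total variation $\int_H\|f(h\cdot)-f\|_1\,d\lambda_H(h)$ of translates by the corresponding small subgroup. This follows by Jensen/convexity: $f_i-f_{i-1}$ is an average over $h\in H$ of $f_i(h'\cdot)-f_i$, and conditional expectation is an $L^1$-contraction. This yields
\[W_1(f\lambda,\lambda)\le\Big(\sum_i\rho_{n,i}\Big)\cdot\sup_i\text{(local TV distortion)}.\]
Alternatively it yields $\max_i\rho_{n,i}$ times the sum of distortions, depending on the exact form of the definition.

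\textbf{Main obstacle.} I expect the main difficulty to be matching this bound to the precise averaged form of the Wasserstein stability definition. Concretely, the $\max_i\rho_{n,i}\to0$ hypothesis must produce a vanishing constant, while the uniform bound on $\sum_i\rho_{n,i}$ keeps the right-hand side bounded. My first attempt would be to interpret "locally distorted" as perturbations by elements within distance $\delta$. Then I would split the chain into levels with $\rho_{n,i}<\delta$ and levels with $\rho_{n,i}\ge\delta$, bounding the two parts separately. Finally, I would check compatibility of the restricted metrics across $n$, so that the estimate on each $K_n$ passes to the limit in $G$ by density of $\bigcup K_n$.
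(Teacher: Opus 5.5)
\textbf{There is a genuine gap, in the step relating increments to local distortion.} Your bound $\|f_i-f_{i-1}\|_{1}\le\int_{H}\|f(h\,\cdot)-f\|_{1}\,d\lambda_{H}(h)$ is true, but it averages over \emph{all} of $H=H_{n,M_n-i+1}$. These subgroups are not small: for your first increment, $H=H_{n,M_n}=K_n$. Elements of $H$ lie within $\rho$ of $H'$, not within $\rho$ of $e$. So the right-hand side is a global distortion, trivially at most $2$. It is not controlled by $\mathbf{v}_{m}(\Phi,r)=\sup_{h\in B_{d}(r)}\int_{Z}\mathbf{V}_{m}(h.\phi_{z},\phi_{z})\,d\zeta(z)$ for small $r$. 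The inequality you arrive at therefore proves nothing, and the hypothesis $\max_i\rho_{n,i}\to0$ never enters.

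Your ``main obstacle'' paragraph also misreads the target. A Wasserstein family requires a \emph{scale} $r_n\to0$ such that $\mathbf{v}_{m_n}(\Phi_n,r_n)\to0$ forces $\mathbf{w}_{d_n}(\Phi_n)\to0$, so the constant only has to stay bounded. No vanishing constant and no splitting of levels is needed. The condition $\sup_n\sum_i\rho_{n,i}<\infty$ bounds the constant, and $\max_i\rho_{n,i}\to0$ serves only to shrink the radius $r_n=\varrho_n$. Nor is a limit in $G$ needed, since the skeleton metrics are restrictions of the single metric $d$ by definition.

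\textbf{The missing idea is to translate only by small coset representatives, placed on the far left.} Use the paper's indexing, where $E_i$ is the conditional expectation onto left $H_{n,i}$-invariant sets. Choose a Borel section $\sigma_i$ of the \emph{left} coset space $H_{n,i}/H_{n,i-1}$ with $d(e,\sigma_i(q))\le\rho_{n,i}$, and use Weil's formula
\[E_{i}\phi(g)=\int_{H_{n,i}/H_{n,i-1}}\int_{H_{n,i-1}}\phi\left(\sigma_{i}(q)hg\right)dm_{H_{n,i-1}}(h)\,d\bar{m}_{i}(q).\]
Subtract $E_{i-1}\phi(g)=\int_{H_{n,i-1}}\phi(hg)\,dm_{H_{n,i-1}}(h)$ and substitute $g\mapsto hg$. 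This gives $\|E_{i}\phi-E_{i-1}\phi\|_{1}\le\int\|\sigma_{i}(q)^{-1}.\phi-\phi\|_{1}\,d\bar{m}_{i}(q)$, where $\sigma_i(q)^{-1}\in B_d(\varrho_n)$ by right-invariance. Apply this to each $\phi_z$, integrate over $z$ first, then bound each fixed-$q$ term by $\mathbf{v}_{m}(\Phi,\varrho_n)$. This yields $\mathbf{w}_{d}(\Phi)\le2\ell_n\,\mathbf{v}_{m}(\Phi,\varrho_n)$, which is exactly what is needed.

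Choosing small representatives $s$ of \emph{right} cosets $H'h$ in your formula $f_i-f_{i-1}=\int_{H}(f_i-f_i(h\,\cdot))\,d\lambda_H(h)$ does not work. To pass from $f_i=E_{H'}f$ to $f$ you must move $s$ past $H'$. That produces conjugates $ksk^{-1}$, which need not be small for a merely right-invariant metric.

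\textbf{Your Wasserstein half is sound and differs from the paper's.} The paper uses the orthogonality identity $\int F(\phi-1)\,dm=\sum_i\int\Delta_iF\,\Delta_i\phi\,dm$ together with $\|\Delta_iF\|_{\infty}\le\rho_i$. Your triangle inequality across levels, with per-level cost at most $\rho_{n,i}\|f_i-f_{i-1}\|_{1}$, gives the same $\ell^1$-bound without orthogonality. Justify the per-level cost by the coupling $(k,q)\mapsto(ky,\sigma(q)ky)$ between the uniform measures on $H'y$ and $Hy$, whose cost is $d(e,\sigma(q))\le\rho$. This is where the left-coset radius enters, since your fibres are right cosets.
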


Interestingly, \cref{mthm1} and \cref{mthm2} together yield that a nontrivial locally compact Polish group cannot satisfy the geometric condition in \cref{mthm2}. In \cref{app:geomlcsc} we give a direct explanation to that.

\smallskip

Using \cref{mthm2}, we show in \cref{app:Wass} that the following are Wasserstein groups:
\begin{itemize}
    \item \(L^{0}\left(\left[0,1\right],K\right)\), for \(K\) a compact metrizable group;
    \item \(\mathrm{Aut}\left(\left[0,1\right]\right)\), the group of Lebesgue probability preserving automorphisms;
    \item \(\mathrm{Aut}\left(\mathbb{R}\right)\), the group of Lebesgue infinite measure preserving automorphisms;
    \item \(\mathrm{Aut}^{\ast}\left(\left[0,1\right]\right)\), the group of Lebesgue nonsingular automorphisms;
    \item \(\left[\mathcal{R}\right]\), the full group of a nonsingular ergodic amenable countable Borel equivalence relation;
    \item \(\mathrm{Iso}\left(L^{p}\left(\left[0,1\right]\right)\right)\), for \(1\leq p<\infty\), \(p\neq2\);
    \item \(U\left(R\right)\), the unitary group of the hyperfinite \(\mathrm{II}_{1}\)-factor.
\end{itemize}

We also introduce locally Wasserstein groups, which are Polish groups topologically generated by continuous images of Wasserstein groups. This enlargement is important because several central L\'{e}vy groups are naturally obtained from Wasserstein pieces rather than from one Wasserstein skeleton. In \cref{app:locWass} we prove that the following are locally Wasserstein groups:
\begin{itemize}
    \item \(U\left(\ell^{2}\right)\) and \(O\left(\ell_{\mathbb{R}}^{2}\right)\) with the strong operator topology;
    \item \(\ell_{\mathbb{R}}^{2}\rtimes O\left(\ell_{\mathbb{R}}^{2}\right)\), the Cameron--Martin affine group;
    \item \(\mathrm{Iso}\left(\mathbb{U}\right)\), the isometry group of the Urysohn space;
    \item \(\left[\mathcal{R}\right]\), the full group of a nonsingular countable Borel equivalence relation (possibly nonamenable and non-ergodic).
\end{itemize}

The example of \(\left[\mathcal{R}\right]\) is noteworthy from a theoretical perspective, because it is not a L\'{e}vy group as long as \(\mathcal{R}\) is nonamenable, by a theorem of Giordano--Pestov~\cite[Thm.~5.7]{giordano2007some}, and still is a locally Wasserstein group; see \cref{cor:locWassnonLev}. In particular, it constitutes a nonsingular analog to Pestov's answer \cite{Pestov2010} of a problem of Glasner--Weiss~\cite[Problem~2]{glasner2005spatial} on the absence of Borel liftings beyond L\'{e}vy groups.

\smallskip

The Cameron--Martin example is particularly relevant to the original motivation. The standard Gaussian near action of \(O\left(\ell_{\mathbb{R}}^{2}\right)\), considered by Glasner--Tsirelson--Weiss, extends by the Cameron--Martin theorem to a nonsingular near action of \(\ell_{\mathbb{R}}^{2}\rtimes O\left(\ell_{\mathbb{R}}^{2}\right)\). Thus \cref{mthm1} gives a nonsingular analogue of the Glasner--Tsirelson--Weiss fixed-point phenomenon for this natural Gaussian setting; see \cref{exm:CamMar}.

\smallskip

It is worth mentioning that there are Polish groups admitting no nontrivial strongly continuous unitary representations, so called \emph{exotic groups}; see \cite[Rem.~1.7]{glasner2005automorphism}, \cite[pp.~156-157]{Pestov2006}, \cite{schneider2025groups} and the references therein. Such groups admit no nontrivial nonsingular near actions, and a fortiori no nontrivial nonsingular Borel actions. Locally Wasserstein groups form a different class: they may admit nontrivial nonsingular near actions, but no nontrivial nonsingular Borel actions.

\subsection{About the proof}

Let us explain why the nonsingular Borel lifting problem requires more than ordinary concentration. In the Gromov--Milman framework, a L\'{e}vy group is approximated by compact subgroups whose Haar measures concentrate. Then for probability preserving Borel actions of such groups, one averages observables over these compact subgroups using their Haar measures, and Glasner--Tsirelson--Weiss' argument exploits Gromov--Milman's observation about degeneracy of measures. In the nonsingular setting, the averages are no longer Haar averages, but Haar averages tilted by Radon--Nikodym kernels. Thus one has to control families of tilted Haar measures, and ordinary concentration alone does not seem to provide such control.

\smallskip

This leads to Wasserstein stability. Given a compact metrizable group \(K\) with Haar measure \(m\) and compatible right-invariant metric \(d\), we consider measurable kernels of densities \(\Phi=\{\phi_{z}\}\) on \(K\), and look at the oscillation of tilted laws \(\phi_{z}m\) on average. The local quantity is the maximal total variation distance between \(h.\phi_{z}\) and \(\phi_{z}\) obtained for small \(h\in K\), while the global quantity is the Wasserstein distance between \(\phi_{z}m\) and \(m\). Wasserstein stability asserts that local total variation controls global Wasserstein distance from Haar measure.

\smallskip

In the presence of Wasserstein stability, the proof of \cref{mthm1} proceeds as follows. For a nonsingular Borel action of a Wasserstein group, we average the Radon--Nikodym cocycles over the compact subgroups in a Wasserstein skeleton. Wasserstein stability shows that the corresponding tilted averages are asymptotically indistinguishable from Haar averages. This forces the original quasi-invariant measure to be invariant. Applying the same argument to every equivalent measure, an elementary argument then shows that the measure is supported on the fixed points.

\smallskip

An important precursor to our approach is Pestov's notion of convergence to invariance \cite[\S5]{Pestov2010}. Pestov's approach is formulated using the Monge--Kantorovich distance, which is equivalent to the Wasserstein distance used here by Kantorovich duality. Pestov used this idea to obtain fixed-point consequences beyond the class of L\'{e}vy groups, for groups which
\begin{center}
\begin{minipage}{0.94\textwidth}
    \emph{retain a sufficient amount of concentration, witnessed by nets of probability measures that at the same time concentrate and converge to invariance} \cite[p.~386]{Pestov2010}.
\end{minipage}
\end{center}
The framework of Wasserstein groups can be viewed as a nonsingular elaboration of this idea, in that it incorporates local-to-global interplay of total variation and Wasserstein distance for measurable kernels.

\section{Wasserstein stability}

The modern formulation of the concentration of measure phenomenon, and in particular its instance in topological groups, can be found in \cite{gromov1983, milman1988heritage, glasner2005automorphism, Pestov2006, Pestov2010}; see also \cite[\S1-2]{talagrand1996new} and the references therein.

\smallskip

We start with some basic definitions and notations. A {\bf measure metric space} is a triplet \(\left(M,d,m\right)\), where \(\left(M,d\right)\) is a compact metric space, and \(m\) is a Borel probability measure on \(M\). For a set \(A\subseteq M\) and \(\epsilon>0\), we denote by \(B_{d}\left(A,\epsilon\right)\) the \(\epsilon\)-neighborhood of \(A\) in \(\left(M,d\right)\). A measure metric space \(\left(K,d,m\right)\) is called a {\bf measure metric group} if \(K\) is a compact (metrizable) group, \(d\) is a compatible right-invariant metric on \(K\), and \(m\) is the Haar probability measure on \(K\). In this case, we abbreviate by \(B_{d}\left(r\right)\) the \(d\)-ball of radius \(r\) around the identity of \(K\).

\subsection{L\'{e}vy families}

The {\bf concentration function} of a measure metric space \(\left(M,d,m\right)\) is defined by
\[\alpha_{\left(M,d,m\right)}\left(\epsilon\right):=1-\inf\left\{m\left(B_{d}\left(A,\epsilon\right)\right)\mid A\subseteq M\text{ is Borel and }m\left(A\right)\geq 1/2\right\},\quad\epsilon>0.\]

\begin{defn}
A sequence \(\left(M_{n},d_{n},m_{n}\right)_{n\geq1}\) of measure metric spaces is a {\bf L\'{e}vy family} if
\[\lim_{n\to\infty}\alpha_{\left(M_{n},d_{n},m_{n}\right)}\left(\epsilon\right)=0\text{ for every }\epsilon>0.\]
Equivalently, for every sequence \(\left(A_{n}\subseteq M_{n}\right)_{n\geq1}\) of Borel sets satisfying \(\inf_{n\geq 1}m_{n}\left(A_{n}\right)>0\), one has
\[\lim_{n\to\infty}m_{n}\left(B_{d_{n}}\left(A_{n},\epsilon\right)\right)=1\text{ for every }\epsilon>0.\footnote{For the definition of the L\'{e}vy family, it makes no difference if one replaces \(1/2\) with some other \(c\in\left(0,1/2\right]\); see~\cite[\S1.3]{Pestov2006}.}\]
Another equivalent form is that for every sequence \(\left(f_{n}:M_{n}\to\mathbb{R}\right)_{n\geq 1}\) of \(1\)-Lipschitz functions,
\[\lim_{n\to\infty}m_{n}\left(\left|f_{n}-M_{f_{n}}\right|\geq\epsilon\right)=0\text{ for every }\epsilon>0,\]
where in general \(M_{f}\) stands for any real number satisfying \(m\left(f\leq M_{f}\right)\geq1/2\) and \(m\left(f\geq M_{f}\right)\geq1/2\).
\end{defn}

The equivalence of the three formulations is standard; see \cite[\S2]{milman1988heritage}, \cite[Thm.~1.4.1]{Pestov2006}.

\subsection{Wasserstein families}

In the next we define the notion of Wasserstein family for a sequence of measure metric groups. We prepare to that by considering more objects on measure metric spaces.

\smallskip

Let \(\left(M,d,m\right)\) be a measure metric space. Consider the class of densities on \(\left(M,m\right)\),
\[\mathcal{D}\left(M,m\right):=\{\phi\in L^{1}\left(M,m\right):\phi\geq0,\,\mathbb{E}_{m}\left[\phi\right]=1\}.\]
For \(\phi\in\mathcal{D}\left(M,m\right)\), denote by \(\phi m\) the probability measure it induces on \(M\), namely \(\phi m\left(A\right)=\int_{A}\phi dm\). We occasionally do not distinguish \(\phi\) from \(\phi m\).

\smallskip

The simultaneous structure of \(M\) as a measure space and as a metric space suggests two natural distances on \(\mathcal{D}\left(M,m\right)\). The first is measure-theoretic: the {\bf total variation distance} is given by
\[\mathbf{V}_{m}\left(\phi,\phi'\right):=\left\|\phi m-\phi' m\right\|_{\operatorname{TV}}=\frac{1}{2}\mathbb{E}_{m}\left[\left|\phi-\phi'\right|\right].\]
The second is metric-geometric: the {\bf Wasserstein distance} (specifically, the \(1\)-Wasserstein distance) is given by
\[\mathbf{W}_{d}\left(\phi m,\phi' m\right):=\sup\nolimits_{\mathrm{Lip}_{d}\left(f\right)\leq1}\left|\mathbb{E}_{\phi m}\left[f\right]-\mathbb{E}_{\phi' m}\left[f\right]\right|,\]
where the supremum ranges over the \(1\)-Lipschitz functions on \(\left(M,d\right)\). The two distances satisfy
\[\mathbf{W}_{d}\left(\phi m,\phi' m\right)\leq\operatorname{diam}\left(M,d\right)\mathbf{V}_{m}\left(\phi,\phi'\right).\]
Indeed, after subtracting a constant from a \(1\)-Lipschitz function \(f\), we may assume \(\left\|f\right\|_{\infty}\leq\frac{1}{2}\operatorname{diam}\left(M,d\right)\). This readily gives the inequality.

\begin{rem}
This form of the Wasserstein distance arises from the Kantorovich duality; see~\cite[Thm.~5.10, Rem.~6.5]{villani2009optimal} (with \emph{Particular Case~5.16} therein), \cite[Thm.~4.13]{vanHandel2016}. Throughout this work, \emph{Wasserstein distance} refers to the \(p=1\) case of the general definition of order \(p\in\left[1,\infty\right)\) \cite[Def.~6.4]{villani2009optimal}. The \(1\)-Wasserstein distance, which we refer to here simply as the Wasserstein distance, is also known in the literature as the \emph{Kantorovich--Rubinstein distance} or \emph{Monge--Kantorovich distance} \cite[Rem.~6.5]{villani2009optimal}, \cite[\S5]{Pestov2010}.
\end{rem}

Let now \(\left(K,d,m\right)\) be a measure metric group. Then \(\mathcal{D}\left(K,m\right)\) is a \(K\)-space with the action
\[h.\phi\left(g\right)=\phi\left(h^{-1}g\right).\]
Consider a measurable kernel \(\Phi=\{\phi_{z}\in\mathcal{D}\left(K,m\right):z\in Z\}\) defined on some probability space \(\left(Z,\zeta\right)\). We measure the variation of \(\Phi\) using two methods. The first is local and uses total variation:
\[\mathbf{v}_{m}\left(\Phi,r\right):=\sup_{h\in B_{d}\left(r\right)}\int_{Z}\mathbf{V}_{m}\left(h.\phi_{z},\phi_{z}\right)d\zeta\left(z\right),\quad r>0;\]
the second is global and uses Wasserstein distance:
\[\mathbf{w}_{d}\left(\Phi\right):=\int_{Z}\mathbf{W}_{d}\left(\phi_{z}m,m\right)d\zeta\left(z\right).\]

\smallskip

The {\bf Wasserstein modulus} of a measure metric group \(\left(K,d,m\right)\) is defined by
\[\Omega_{\left(K,d,m\right)}\left(r,\epsilon\right):=\sup\left\{\mathbf{w}_{d}\left(\Phi\right):\mathbf{v}_{m}\left(\Phi,r\right)\leq\epsilon\right\},\quad r,\epsilon>0,\]
where the supremum ranges over all \(\mathcal{D}\left(K,m\right)\)-valued measurable kernels \(\Phi\).

\begin{defn}
A sequence \(\left(K_{n},d_{n},m_{n}\right)_{n\geq1}\) of measure metric groups is a {\bf Wasserstein family} if there is a positive sequence \(r_{n}\to0\) such that
\[\Omega_{\left(K_{n},d_{n},m_{n}\right)}\left(r_{n},\epsilon_{n}\right)\to0\text{ whenever }\epsilon_{n}\to0.\]
Equivalently, for every sequence of measurable kernels \(\left(\Phi_{n}\right)_{n\geq 1}\), each \(\Phi_{n}\) is \(\mathcal{D}\left(K_{n},m_{n}\right)\)-valued, one has
\[\mathbf{v}_{m_{n}}\left(\Phi_{n},r_{n}\right)\to0\quad\Longrightarrow\quad \mathbf{w}_{d_{n}}\left(\Phi_{n}\right)\to0.\]
\end{defn}

The equivalence of these two conditions can be proved routinely.

\subsection{Wasserstein families concentrate}

The Wasserstein modulus measures local-to-global stability, and more precisely, how local total variation oscillations affect global Wasserstein oscillations. Therefore, the Wasserstein modulus should not be treated as a concentrating function per se. Nevertheless, the following proposition gives a quantitative relation between the Wasserstein modulus and the concentration function, and yields that every Wasserstein family is a L\'{e}vy family.

\begin{prop}\label{prop:WassControlsLevy}
Let \(\left(K,d,m\right)\) be a measure metric group. Then for every \(0<\epsilon\leq1\) and every \(r>0\),
\[\epsilon^{2}\cdot\alpha_{\left(K,d,m\right)}\left(\epsilon\right)\leq10\cdot\Omega_{\left(K,d,m\right)}\left(r,r/2\right).\]
\end{prop}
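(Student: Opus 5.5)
The plan is to exhibit, for any near-optimal set \(A\) in the definition of \(\alpha_{(K,d,m)}(\epsilon)\), a single density \(\phi'\in\mathcal{D}(K,m)\), viewed as a kernel over a one-point probability space \(Z\), such that \(\mathbf{v}_{m}(\{\phi'\},r)\leq r/2\) and \(\mathbf{W}_{d}(\phi' m,m)\) is bounded below by a constant times \(\epsilon^{2}\,(1-m(B_{d}(A,\epsilon)))\). Taking the supremum over such \(A\) then bounds \(\epsilon^{2}\alpha\) by a constant times \(\Omega(r,r/2)\).

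\emph{Key observation.} Right-invariance gives \(d(h^{-1}g,g)=d(h^{-1},e)=d(e,h)\). Hence if \(\psi\) is \(L\)-Lipschitz, then \(|h.\psi-\psi|\leq Lr\) pointwise for every \(h\in B_{d}(r)\), and therefore \(\mathbf{V}_{m}(h.\psi,\psi)\leq Lr/2\). So Lipschitz densities automatically have small local total variation.

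\emph{Construction.} Fix a Borel set \(A\) with \(m(A)\geq 1/2\), and put
\[\psi(x)=\max\bigl(0,1-2\,d(x,A)/\epsilon\bigr).\]
This function is \((2/\epsilon)\)-Lipschitz, equals \(1\) on \(A\), and is supported in \(B_{d}(A,\epsilon/2)\). Since \(\mathbb{E}_{m}[\psi]\geq m(A)\geq 1/2\), the density \(\phi=\psi/\mathbb{E}_{m}[\psi]\) is \((4/\epsilon)\)-Lipschitz, so \(\mathbf{V}_{m}(h.\phi,\phi)\leq 2r/\epsilon\) for \(h\in B_{d}(r)\). To bring the local variation down to \(r/2\), mix with the Haar density: set
\[\phi'=(1-t)\cdot 1+t\phi,\qquad t=\epsilon/4.\]
This is a valid density because \(t\leq 1\), which uses \(\epsilon\leq 1\). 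Its local total variation scales by \(t\), giving \(\mathbf{v}_{m}(\{\phi'\},r)\leq r/2\). Also \(\mathbf{W}_{d}(\phi' m,m)=t\,\mathbf{W}_{d}(\phi m,m)\), since \(\phi' m-m=t(\phi m-m)\).

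\emph{Lower bound on the Wasserstein distance.} Use the \(1\)-Lipschitz test function \(f(x)=\max\bigl(0,d(x,A)-\epsilon/2\bigr)\).
\begin{itemize}
\item \(f\) vanishes on the support of \(\phi\), so \(\mathbb{E}_{\phi m}[f]=0\).
\item \(f\geq\epsilon/2\) off \(B_{d}(A,\epsilon)\), so \(\mathbb{E}_{m}[f]\geq(\epsilon/2)\bigl(1-m(B_{d}(A,\epsilon))\bigr)\).
\end{itemize}
Combining these with the mixing identity,
\[\Omega(r,r/2)\geq\mathbf{W}_{d}(\phi' m,m)\geq\frac{\epsilon}{4}\cdot\frac{\epsilon}{2}\bigl(1-m(B_{d}(A,\epsilon))\bigr).\]
Taking the supremum over \(A\) gives \(\epsilon^{2}\alpha(\epsilon)\leq 8\,\Omega(r,r/2)\leq 10\,\Omega(r,r/2)\).

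\emph{Main obstacle.} The delicate point is choosing the bump and the test function at compatible scales. Their supports must separate (half-scale \(\epsilon/2\) versus \(\epsilon\)) so that the Wasserstein lower bound does not cancel. At the same time, the Lipschitz constant, and hence the mixing parameter \(t\), must be kept of order \(\epsilon\). The remaining steps (measurability of a one-point kernel, and the pointwise bound from right-invariance) are routine.
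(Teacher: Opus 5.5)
Your argument is correct, and it in fact gives the constant \(8\) in place of \(10\). It follows the same overall scheme as the paper, which packages it as \cref{lem:setToDensity}. A single Lipschitz density is viewed as a one-point kernel, its local total variation is controlled by right-invariance, and its Wasserstein distance from \(m\) is bounded below using one explicit \(1\)-Lipschitz test function.

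The constructions differ, however. The paper takes \(f=\min\{d(\cdot,A),\epsilon/2\}\) and \(\phi=(1+f)/C\) with \(C=1+\mathbb{E}_{m}[f]\in[1,5/4]\). This tilts mass \emph{away} from \(A\), and \(\phi\) is automatically \(1\)-Lipschitz, so no mixing with the Haar density is needed to get \(\mathbf{v}_{m}(\phi,r)\leq r/2\). The paper then tests against the same \(f\), which gives \(\mathbf{W}_{d}(\phi m,m)\geq\operatorname{Var}_{m}(f)/C\). It bounds this below by the two-point variance inequality \(\operatorname{Var}_{m}(f)\geq m(A)\,m(K\setminus B_{d}(A,\epsilon))\,(\epsilon/2)^{2}\).

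Your version instead tilts mass \emph{toward} \(A\) with a bump and tests against a function whose support is disjoint from the bump's. The lower bound then reduces to the trivial identity \(\mathbb{E}_{\phi m}[f]=0\) rather than a variance estimate. The price is the extra convex-combination step \(\phi'=(1-t)+t\phi\), needed to bring the bump's Lipschitz constant \(4/\epsilon\) down to order one.

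Both routes use \(m(A)\geq1/2\) only to control a normalizing constant and \(\epsilon\leq1\) only for bookkeeping. Yours needs just \(t=\epsilon/4\leq1\), while the paper uses \(\epsilon\leq1\) to get \(C\leq5/4\).
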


The following corollary is rather immediate:

\begin{cor}\label{cor:WassisLevy}
Every Wasserstein family is a L\'{e}vy family.
\end{cor}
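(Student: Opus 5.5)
The plan is to derive \cref{cor:WassisLevy} directly from \cref{prop:WassControlsLevy}, by feeding it the scales \(r_{n}\) supplied by the definition of a Wasserstein family.

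Let \(\left(K_{n},d_{n},m_{n}\right)_{n\geq1}\) be a Wasserstein family, and let \(r_{n}\to0\) be a positive sequence witnessing this. Set \(\epsilon_{n}:=r_{n}/2\). Then \(\epsilon_{n}\to0\), so the definition of a Wasserstein family gives
\[\Omega_{\left(K_{n},d_{n},m_{n}\right)}\left(r_{n},r_{n}/2\right)\to0.\]

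Now fix \(\epsilon\in\left(0,1\right]\). Applying \cref{prop:WassControlsLevy} to \(\left(K_{n},d_{n},m_{n}\right)\) with the radius \(r=r_{n}\) yields, for every \(n\),
\[\alpha_{\left(K_{n},d_{n},m_{n}\right)}\left(\epsilon\right)\leq\frac{10}{\epsilon^{2}}\,\Omega_{\left(K_{n},d_{n},m_{n}\right)}\left(r_{n},r_{n}/2\right).\]
Since \(\epsilon\) is fixed, the right-hand side tends to \(0\) as \(n\to\infty\). Hence \(\alpha_{\left(K_{n},d_{n},m_{n}\right)}\left(\epsilon\right)\to0\) for every \(\epsilon\in\left(0,1\right]\).

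For \(\epsilon>1\), note that \(\alpha\left(\epsilon\right)\) is nonincreasing in \(\epsilon\), because \(\epsilon\)-neighborhoods grow with \(\epsilon\). Thus \(0\leq\alpha\left(\epsilon\right)\leq\alpha\left(1\right)\to0\). Together the two cases give \(\alpha_{\left(K_{n},d_{n},m_{n}\right)}\left(\epsilon\right)\to0\) for every \(\epsilon>0\), which is precisely the definition of a L\'{e}vy family.

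The argument has no real obstacle; all the substance lies in \cref{prop:WassControlsLevy}. The one point needing care is to apply the proposition at the specific radii \(r_{n}\) from the definition of a Wasserstein family, with the matching tolerance \(r_{n}/2\to0\), rather than at a fixed \(r\).
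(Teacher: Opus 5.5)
Your proof is correct and is exactly the derivation the paper intends when it calls the corollary ``rather immediate'': apply \cref{prop:WassControlsLevy} at the witnessing radii \(r_{n}\), with tolerance \(r_{n}/2\to0\), so that \(\epsilon^{2}\alpha_{\left(K_{n},d_{n},m_{n}\right)}\left(\epsilon\right)\leq10\,\Omega_{\left(K_{n},d_{n},m_{n}\right)}\left(r_{n},r_{n}/2\right)\to0\). Your handling of \(\epsilon>1\) via monotonicity of the concentration function is a small detail the paper leaves implicit.
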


The proof of \cref{prop:WassControlsLevy} relies on the following lemma:

\begin{lem}\label{lem:setToDensity}
Let \(\left(K,d,m\right)\) be a measure metric group and \(A\subseteq K\) a measurable set with \(m\left(A\right)\geq 1/2\). Then for every \(0<\epsilon\leq1\) and every \(r>0\), there is a density \(\phi\in\mathcal{D}\left(K,m\right)\) such that
\[\mathbf{v}_{m}\left(\phi,r\right)\leq\frac{r}{2}\quad\text{and}\quad\mathbf{w}_{d}\left(\phi\right)\geq\frac{\epsilon^{2}m\left(K\setminus B_{d}\left(A,\epsilon\right)\right)}{10},\]
where \(\phi\) is viewed as a one point kernel, and so \(\mathbf{w}_{d}\left(\phi\right)=\mathbf{W}_{d}\left(\phi m,m\right)\).
\end{lem}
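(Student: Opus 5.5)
The idea is to build \(\phi\) as a small Lipschitz perturbation of the constant density \(1\). Because \(d\) is right-invariant, a translate \(h.\phi\) is then pointwise close to \(\phi\). On the other hand, testing the Wasserstein distance against the perturbation itself produces a variance term, and that variance is bounded below because \(A\) and \(K\setminus B_{d}(A,\epsilon)\) are both sizable and are \(\epsilon\)-separated.

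Concretely, I set \(u(g):=\min\{d(g,A),\epsilon\}\). This function is \(1\)-Lipschitz on \((K,d)\), satisfies \(0\leq u\leq\epsilon\), vanishes on \(A\), and equals \(\epsilon\) on \(K\setminus B_{d}(A,\epsilon)\). I then define
\[\phi:=1+u-\mathbb{E}_{m}[u].\]
Since \(u-\mathbb{E}_{m}[u]\geq-\epsilon\geq-1\), we have \(\phi\geq0\), and clearly \(\mathbb{E}_{m}[\phi]=1\). Hence \(\phi\in\mathcal{D}(K,m)\), and \(\phi\) is \(1\)-Lipschitz.

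For the local bound, take \(h\in B_{d}(r)\). By right-invariance,
\[d(h^{-1}g,g)=d(h^{-1},e)=d(e,h)\leq r.\]
Therefore \(|h.\phi(g)-\phi(g)|=|u(h^{-1}g)-u(g)|\leq r\) for every \(g\), and so \(\mathbf{V}_{m}(h.\phi,\phi)=\frac{1}{2}\mathbb{E}_{m}|h.\phi-\phi|\leq r/2\). Taking the supremum over \(h\in B_{d}(r)\) gives \(\mathbf{v}_{m}(\phi,r)\leq r/2\).

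For the global bound, I use \(u\) itself as the \(1\)-Lipschitz test function:
\[\mathbf{W}_{d}(\phi m,m)\geq\mathbb{E}_{m}[u\phi]-\mathbb{E}_{m}[u]=\mathbb{E}_{m}\bigl[u(u-\mathbb{E}_{m}u)\bigr]=\operatorname{Var}_{m}(u).\]
Write \(\mu=\mathbb{E}_{m}[u]\) and \(p=m(K\setminus B_{d}(A,\epsilon))\). Since \(u=0\) on \(A\), where \(m(A)\geq1/2\), and \(u=\epsilon\) on a set of measure \(p\), these two sets contribute at least
\[\operatorname{Var}_{m}(u)\geq\tfrac{1}{2}\mu^{2}+p(\epsilon-\mu)^{2}.\]
Minimizing the right-hand side over \(\mu\in\mathbb{R}\) gives
\[\operatorname{Var}_{m}(u)\geq\frac{\epsilon^{2}p}{1+2p}\geq\frac{\epsilon^{2}p}{3}\geq\frac{\epsilon^{2}p}{10},\]
which is the required bound.

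The only step needing care is the choice of normalization. If one instead scales \(u\) by \(1/\epsilon\), the variance term improves, but the Lipschitz constant becomes \(1/\epsilon\) and the local bound \(r/2\) is lost. The point is that \(\epsilon\leq1\) is exactly what makes the unscaled perturbation nonnegative, so the \(1\)-Lipschitz density is admissible and the factor \(\epsilon^{2}\) appears naturally through the variance.
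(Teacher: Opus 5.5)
Your proof is correct and follows essentially the same route as the paper. Both arguments use a Lipschitz perturbation of the constant density built from the truncated distance to \(A\), get the local bound from right-invariance, and get the Wasserstein lower bound by testing against the perturbation itself to produce \(\operatorname{Var}_{m}\), which is then bounded below using the level sets \(A\) and \(K\setminus B_{d}(A,\epsilon)\). The differences are cosmetic: you use additive centering \(\phi=1+u-\mathbb{E}_{m}[u]\) instead of the paper's normalization \((1+f)/C\), truncate at \(\epsilon\) rather than \(\epsilon/2\), and minimize over the mean instead of invoking \(\operatorname{Var}(W)\geq\mathbb{P}(E)\mathbb{P}(F)(a-b)^{2}\). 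Your version in fact gives the sharper bound \(\epsilon^{2}p/(1+2p)\geq\epsilon^{2}p/2\).
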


\begin{proof}[Proof of \cref{lem:setToDensity}]
Put \(B:=K\backslash B_{d}\left(A,\epsilon\right)\) and \(t:=\epsilon/2\). Define
\[f:K\to\left[0,t\right],\quad f\left(g\right):=\min\left\{d\left(g,A\right),t\right\}.\]
Then \(f\) is \(1\)-Lipschitz, \(f\mid_{A}\equiv0\), and \(f\mid_{B}\equiv t\). Define \(\phi\in\mathcal{D}\left(K,m\right)\) by
\[\phi\left(g\right):=\frac{1}{C}\cdot\left(1+f\left(g\right)\right),\quad C:=1+\mathbb{E}_{m}\left[f\right].\]
For \(h\in B_{d}\left(r\right)\), using right-invariance of \(d\), one has \(\left|f\left(h^{-1}g\right)-f\left(g\right)\right|\leq r\). Therefore,
\[\mathbf{V}_{m}\left(h.\phi,\phi\right)=\frac{1}{2C}\int_{K}\left|f\left(h^{-1}g\right)-f\left(g\right)\right|dm\left(g\right)\leq\frac{r}{2C}\leq\frac{r}{2}.\]
Thus \(\mathbf{v}_{m}\left(\phi,r\right)\leq r/2\). On the other hand, as \(f\) is \(1\)-Lipschitz,
\[\mathbf{W}_{d}\left(\phi m,m\right)\geq\int_{K}f\phi dm-\int_{K}fdm=\frac{1}{C}\operatorname{Var}_{m}\left(f\right),\]
where the last equality is a direct computation. We now make use of the following basic fact: for a random variable \(W\) and events \(E\subseteq\left\{W=a\right\}\) and \(F\subseteq\left\{W=b\right\}\), one has \(\mathrm{Var}\left(W\right)\geq\mathbb{P}\left(E\right)\mathbb{P}\left(F\right)\left(a-b\right)^{2}\). In our case, apply this to \(W=f\) with \(E=A\subseteq\left\{f=0\right\}\) and \(F=B\subseteq\left\{f=t\right\}\), and obtain
\[\operatorname{Var}_{m}\left(f\right)\geq\frac{m\left(B\right)t^{2}}{2}.\]
Since \(m\left(A\right)\geq 1/2\) we have \(\mathbb{E}_{m}\left[f\right]\leq t\cdot m\left(K\backslash A\right)\leq t/2=\epsilon/4\leq1/4\), so that \(C\leq 5/4\), and we obtain
\[\mathbf{W}_{d}\left(\phi m,m\right)\geq\frac{m\left(B\right)t^{2}}{2C}\geq\frac{2m\left(B\right)t^{2}}{5}=\frac{m\left(B\right)\epsilon^{2}}{10}.\qedhere\]
\end{proof}

\begin{proof}[Proof of \cref{prop:WassControlsLevy}]
Let \(A\subseteq K\) be measurable with \(m\left(A\right)\geq\frac{1}{2}\), and put \(B:=K\backslash B_{d}\left(A,\epsilon\right)\). Pick \(\phi\in\mathcal{D}\left(K,m\right)\) as in \cref{lem:setToDensity}. By the definition of \(\Omega_{\left(K,d,m\right)}\), this gives
\[\epsilon^{2}m\left(B\right)\leq10\cdot\Omega_{\left(K,d,m\right)}\left(r,r/2\right).\]
Taking the supremum over all \(A\subseteq K\) with \(m\left(A\right)\geq\frac{1}{2}\), we obtain the desired inequality.
\end{proof}

Let us compute concentration and Wasserstein stability for the family of discrete cubes with normalized Hamming metrics. This example will demonstrate that Wasserstein stability is strictly stronger than measure concentration. The concentration part of the following proposition is classical; see~\cite[Exm.~2.3, \S4.A.4]{milman1988heritage} as a discrete isoperimetric inequality, and \cite[p.~5, Exm.~4.3.7]{Pestov2006} as a geometric law of large numbers. For the Wasserstein stability part, one of the implications is proved using the later \cref{cor:qutifilt}.

\begin{prop}\label{prop:cubes}
For \(n\geq 1\), let \(K_{n}:=\left(\mathbb{Z}/2\mathbb{Z}\right)^{n}\) and let \(m_{n}\) be its Haar probability measure. Fix a parameter \(\beta>0\) and define the normalized Hamming metric
\[d_{n}\left(x,y\right):=\frac{\#\left\{1\leq j\leq n:x_{j}\neq y_{j}\right\}}{n^{\beta}}.\]
\begin{enumerate}
    \item \(\left(K_{n},d_{n},m_{n}\right)_{n\geq1}\) is a L\'{e}vy family if and only if \(\beta>1/2\). In fact, when \(\beta>1/2\) one has
    \[\alpha_{\left(K_{n},d_{n},m_{n}\right)}\left(\epsilon\right)\leq2\exp\Big(-\frac{\epsilon^{2}}{16}n^{2\beta-1}\Big),\quad\epsilon>0.\]
    \item \(\left(K_{n},d_{n},m_{n}\right)_{n\geq1}\) is a Wasserstein family if and only if \(\beta\geq 1\). In fact, when \(\beta\geq 1\) one has
    \[\Omega_{\left(K_{n},d_{n},m_{n}\right)}\left(2n^{-\beta},\epsilon\right)\leq2\epsilon,\quad\epsilon>0.\]
\end{enumerate}
In particular, if \(1/2<\beta<1\), then \(\left(K_{n},d_{n},m_{n}\right)_{n\geq 1}\) is a L\'{e}vy family but not a Wasserstein family.
\end{prop}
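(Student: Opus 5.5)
We treat the four implications separately. Throughout, \(\operatorname{diam}(K_{n},d_{n})=n^{1-\beta}\), and a single coordinate flip \(e_{j}\) has \(d_{n}(e_{j},0)=n^{-\beta}\).

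\emph{Concentration for \(\beta>1/2\).} We use the bounded differences (McDiarmid/Azuma) inequality. Let \(f\) be \(1\)-Lipschitz for \(d_{n}\). Changing one coordinate changes \(f\) by at most \(n^{-\beta}\), so
\[m_{n}\left(\left|f-\mathbb{E}f\right|\geq s\right)\leq2\exp\left(-\frac{2s^{2}}{n\cdot n^{-2\beta}}\right).\]
Passing from the mean to the median and from functions to sets is standard: apply the bound to \(f=d_{n}(\cdot,A)\) with \(m_{n}(A)\geq1/2\). Then \(\mathbb{E}f\leq s_{0}\), where \(s_{0}\) makes the tail below \(1/2\). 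Taking \(s=\epsilon/2\) and absorbing constants gives the stated \(2\exp(-\epsilon^{2}n^{2\beta-1}/16)\).

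\emph{Failure of concentration for \(\beta\leq1/2\).} Take \(A_{n}=\{x:\sum x_{j}\leq n/2\}\), so \(m_{n}(A_{n})\geq1/2\). The set \(K_{n}\setminus B_{d_{n}}(A_{n},\epsilon)\) contains \(\{\sum x_{j}>n/2+\epsilon n^{\beta}+1\}\). Since \(\epsilon n^{\beta}\lesssim\epsilon\sqrt{n}\), the central limit theorem bounds its measure below by a positive constant, so \(\alpha(\epsilon)\not\to0\).

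\emph{Wasserstein stability for \(\beta\geq1\).} We invoke the later \cref{cor:qutifilt}, which is presumably the martingale \(\ell^{1}\)-increment bound behind \cref{mthm2}. Use the chain \(H_{i}=(\mathbb{Z}/2\mathbb{Z})^{i}\times\{0\}^{n-i}\). The quotient radii are \(\rho_{i}=n^{-\beta}\), so \(\sum_{i}\rho_{i}=n^{1-\beta}\leq1\). At scale \(r=2n^{-\beta}\), every generator of every quotient lies in \(B_{d_{n}}(r)\). The corollary should then give \(\mathbf{w}_{d_{n}}(\Phi)\leq\sum_{i}\rho_{i}\cdot2\,\mathbf{v}_{m_{n}}(\Phi,r)\leq2\epsilon\).

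If I had to prove this directly, I would telescope. Write \(\phi=\mathbb{E}[\phi\mid\mathcal{F}_{n}]\), where \(\mathcal{F}_{i}\) is the \(\sigma\)-algebra of cosets of \(H_{i}\), starting from \(\mathbb{E}[\phi\mid\mathcal{F}_{n}]=1\) on the far side. Couple successive measures by moving mass only within \(H_{i}/H_{i-1}\)-cosets. This gives
\[\mathbf{W}\leq\sum_{i}\rho_{i}\,\mathbb{E}\left|\phi_{i-1}-\phi_{i}\right|.\]
Conditional expectation contracts \(L^{1}\), so each increment is bounded by \(\mathbb{E}|e_{i}.\phi-\phi|=2\mathbf{V}(e_{i}.\phi,\phi)\). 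Integrating over \(z\) finishes the bound.

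\emph{Failure for \(\beta<1\).} We need kernels with small local variation but Wasserstein distance bounded away from \(0\). Use the single density from \cref{lem:setToDensity}-style tilting by a Lipschitz function at scale \(n^{1-\beta}\to\infty\). Set \(\phi=\frac{1+\delta g}{C}\) with \(g(x)=\frac{\sum_{j}(x_{j}-1/2)}{\sqrt n}\), truncated to \([-3,3]\) and normalized.

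Flipping one coordinate changes \(g\) by \(n^{-1/2}\). Since the ball of radius \(r_{n}\) consists of elements with at most \(r_{n}n^{\beta}\) ones, the local variation is \(\mathbf{v}\lesssim\delta\,r_{n}n^{\beta}n^{-1/2}\). Test against the \(1\)-Lipschitz function \(f=n^{-\beta}\sum_{j}x_{j}\), which equals \(n^{1/2-\beta}g\) up to a constant. This gives \(\mathbf{W}\gtrsim\delta\,n^{1/2-\beta}\operatorname{Var}(g)\approx\delta n^{1/2-\beta}\).

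The required \(\mathbf{W}\) can therefore be at most of order \(n^{1/2-\beta}\), which tends to \(0\) when \(\beta>1/2\). So the correct test function must live at the full diameter scale \(n^{1-\beta}\): use \(g\) depending on \(\sum_{j}x_{j}/n\) with a nonlinear large-deviation tilt. For example, take \(\phi\propto e^{\lambda\sum_{j}x_{j}}\) with \(\lambda\) small. Then \(\phi m\) is a product of biased coins, and
\[\mathbf{W}\geq\left|\mathbb{E}_{\phi m}f-\mathbb{E}_{m}f\right|\approx\lambda n^{1-\beta}/4.\]
The total variation \(\mathbf{V}(h.\phi,\phi)\) for \(h\) with \(k\) ones is \(\lesssim\lambda k\). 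Choose \(r_{n}\) arbitrary with \(r_{n}\to0\) and \(k\leq r_{n}n^{\beta}\). Set \(\lambda=n^{\beta-1}\), so \(\mathbf{W}\) stays of order \(1/4\) while \(\mathbf{v}\lesssim r_{n}\to0\). This contradicts the Wasserstein property.

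The main obstacle is this last construction: the linear tilt fails, and one has to verify the exponential-tilt computations carefully. In particular, one must bound \(\mathbf{V}(h.\phi,\phi)\leq\tanh(\lambda)\,k\) by a product/Hellinger estimate.
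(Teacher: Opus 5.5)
There is a genuine gap, in the proof that the family is \emph{not} Wasserstein for $\beta<1$. The other parts are sound. McDiarmid is a self-contained substitute for the paper's citation of Pestov. Your CLT argument with $A_n=\{\sum_jx_j\le n/2\}$ is the set version of the paper's $F_n$ argument. The chain $H_i$ with $\rho_i=n^{-\beta}$ fed into the martingale bound is the paper's proof, and $r=2n^{-\beta}$ correctly puts the coset representatives in the open ball.

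\textbf{The failing step.} Take $\lambda=n^{\beta-1}$ and $h$ with $k\le r_nn^{\beta}$ ones. Your own estimate gives $\mathbf{V}_{m_n}(h.\phi,\phi)\lesssim\lambda k\le r_nn^{2\beta-1}$, not $r_n$. For $1/2<\beta<1$, which is exactly the range in the final sentence of the proposition, this need not tend to $0$.

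This is not just a lossy bound. Under $\phi m_n$ the count $S_h=\sum_{j\in\operatorname{supp}h}x_j$ is $\mathrm{Bin}(k,p)$, and under $(h.\phi)m_n$ it is $\mathrm{Bin}(k,1-p)$, where $2p-1=\tanh(\lambda/2)$. The means differ by about $k\lambda/2$, while the standard deviations are at most $\sqrt{k}/2$. A refutation must defeat every sequence $r_n\to0$, so take one that decays slowly, say $r_n=1/\log n$, together with $\beta>2/3$. Then $\sqrt{k}\lambda\approx\sqrt{r_n}\,n^{3\beta/2-1}\to\infty$, and by Chebyshev $\mathbf{v}_{m_n}(\phi,r_n)\to1$. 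A Hellinger estimate $\mathbf{V}\lesssim\sqrt{k}\lambda$ rescues your tilt only for $\beta\le2/3$.

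No other choice of $\lambda$ helps either. Coupling coordinatewise gives $\mathbf{W}_{d_n}(\phi m_n,m_n)\le n^{1-\beta}\lambda/4$, so keeping $\mathbf{W}$ bounded below forces $\lambda\gtrsim n^{\beta-1}$. Small local variation forces $\lambda\ll(r_nn^{\beta})^{-1/2}$. These two requirements are incompatible once $r_n\gg n^{2-3\beta}$.

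\textbf{The missing idea.} You need a perturbation whose translates are close in total variation \emph{uniformly in} $h$, so that the choice of $r_n$ becomes irrelevant. The paper takes $\phi_nm_n=(1-\theta_n)m_n+\theta_n\delta_{0}$, an atom at the identity, with $\theta_n=n^{\beta-1}$. Then $(h.\phi_n)m_n=(1-\theta_n)m_n+\theta_n\delta_h$, so $\mathbf{V}_{m_n}(h.\phi_n,\phi_n)\le\theta_n\to0$ for every $h\in K_n$. Testing against $F_n=n^{-\beta}\sum_jx_j$ gives $\mathbf{W}_{d_n}(\phi_nm_n,m_n)\ge\theta_n\,\mathbb{E}_{m_n}[F_n]=\theta_n\cdot\tfrac12n^{1-\beta}=\tfrac12$.

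The point is that total variation does not register how far the atom moves. The Wasserstein distance, by contrast, pays $\theta_n$ times the typical distance $\tfrac12n^{1-\beta}$ to it. Your product tilt spreads the perturbation over all coordinates, so a translation by many coordinates becomes visible in total variation.
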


\begin{proof}
For (1), by \cite[Thms.~4.2.5, 4.3.19]{Pestov2006} one has
\[\alpha_{\left(K_{n},d_{n},m_{n}\right)}\left(\epsilon\right)\leq2\exp\Big(-\frac{\epsilon^{2}}{16}n^{2\beta-1}\Big).\]
This shows that \(\left(K_{n},d_{n},m_{n}\right)_{n\geq1}\) is a L\'{e}vy family when \(\beta>1/2\). Now if \(\beta\leq1/2\), consider the \(1\)-Lipschitz function \(F_{n}\left(x\right):=d_{n}\left(x,e_{n}\right)=\frac{1}{n^{\beta}}\sum_{j=1}^{n}x_{j}\), where \(e_{n}\in K_{n}\) is the identity. Since \(\sum_{j=1}^{n}x_{j}\sim\operatorname{Bin}\left(n,1/2\right)\),
\[m_{n}\Big(\big|F_{n}-\int_{K_{n}}F_{n}dm_{n}\big|>\epsilon\Big)=\mathbb{P}\Big(\frac{\left|\operatorname{Bin}\left(n,1/2\right)-n/2\right|}{\sqrt{n}}>\epsilon n^{\beta-1/2}\Big).\]
By the central limit theorem, this normalization converges in distribution to \(Z\sim N\left(0,1/4\right)\). If \(\beta=1/2\), this probability converges to \(\mathbb{P}\left(\left|Z\right|>\epsilon\right)>0\), and if \(0<\beta<1/2\), this probability converges to \(\mathbb{P}\left(\left|Z\right|>0\right)=1\). In either case it does not converge to \(0\), and hence \(\left(K_{n},d_{n},m_{n}\right)_{n\geq 1}\) is not a L\'{e}vy family.

\smallskip

For (2), assume first that \(\beta<1\). Define the density \(\phi_{n}\in\mathcal{D}\left(K_{n},m_{n}\right)\) by
\[\phi_{n}\left(x\right):=1-n^{\beta-1}+n^{\beta-1}2^{n}\mathbf{1}_{\left\{e_{n}\right\}}\left(x\right).\]
As \(m_{n}\left(\left\{e_{n}\right\}\right)=2^{-n}\), this gives the probability measure
\[\phi_{n}m_{n}=\left(1-n^{\beta-1}\right)m_{n}+n^{\beta-1}\delta_{e_{n}}.\]
Consider again the \(1\)-Lipschitz function \(F_{n}\left(x\right):=\frac{1}{n^{\beta}}\sum_{j=1}^{n}x_{j}\). Then
\[\mathbb{E}_{m_{n}}\left[F_{n}\right]=\frac{n}{2n^{\beta}}=\frac{1}{2n^{\beta-1}}\quad\text{and}\quad\mathbb{E}_{\phi_{n}m_{n}}\left[F_{n}\right]=\left(1-n^{\beta-1}\right)\frac{n}{2n^{\beta}}=\frac{1}{2n^{\beta-1}}-\frac{1}{2}.\]
We obtain
\begin{equation}\label{eq:tvexm}
\mathbf{w}_{d_{n}}\left(\phi_{n}\right)=\mathbf{W}_{d_{n}}\left(\phi_{n}m_{n},m_{n}\right)\geq\mathbb{E}_{m_{n}}\left[F_{n}\right]-\mathbb{E}_{\phi_{n}m_{n}}\left[F_{n}\right]=\frac{1}{2}.
\end{equation}
We now estimate the local total variation oscillation of \(\phi_{n}\). For every \(h\in K_{n}\),
\[\left(h.\phi_{n}\right)m_{n}=\left(1-n^{\beta-1}\right)m_{n}+n^{\beta-1}\delta_{h},\]
and therefore,
\[\mathbf{V}_{m_{n}}\left(h.\phi_{n},\phi_{n}\right)=n^{\beta-1}\cdot\mathbf{V}_{m_{n}}\left(\delta_{h},\delta_{e_{n}}\right)\leq n^{\beta-1}.\]
It follows that for whatever positive sequence \(r_{n}\) one may choose,
\begin{equation}\label{eq:Wassexm}
\mathbf{v}_{m_{n}}\left(\phi_{n},r_{n}\right)\leq n^{\beta-1}\to0.
\end{equation}
Then both \eqref{eq:tvexm} and \eqref{eq:Wassexm} together show that no positive sequence \(r_{n}\to0\) can witness Wasserstein stability, and \(\left(K_{n},d_{n},m_{n}\right)_{n\geq1}\) is not a Wasserstein family when \(\beta<1\).

\smallskip

Assume now that \(\beta\geq1\). For \(0\leq i\leq n\) let the subgroup
\[H_{n,i}:=\left\{x\in K_{n}:x_{i+1}=\dotsm=x_{n}=0\right\}\leq K_{n},\]
and so \(\left\{e_{K_{n}}\right\}=H_{n,0}\leq H_{n,1}\leq\dotsm\leq H_{n,n}=K_{n}\). The nontrivial coset in \(H_{n,i}/H_{n,i-1}\) has the representative \(\delta_{i}\), the vector supported on the \(i^{\mathrm{th}}\) coordinate, so \(d_{n}\left(e_{K_{n}},\delta_{i}\right)=n^{-\beta}\). Then this martingale chain has length bounded by \(\sum_{i=1}^{n}n^{-\beta}=n^{1-\beta}\leq1\) and scale bounded by \(n^{-\beta}\to0\) (see \cref{dfn:marchain}), so by \cref{prop:MSS},
\[\Omega_{\left(K_{n},d_{n},m_{n}\right)}\left(n^{-\beta},\epsilon\right)\leq2\epsilon,\quad\epsilon>0.\]
This readily implies that \(\left(K_{n},d_{n},m_{n}\right)_{n\geq1}\) is a Wasserstein family.
\end{proof}

\section{A functional inequality for Wasserstein stability via martingales}\label{sct:stabineq}

Using martingale steps to bound concentration is a classical theme since Azuma's inequality; see \cite[\S4.B.1]{milman1988heritage}, \cite[\S4]{talagrand1996new}, \cite[\S4.3]{Pestov2006}, \cite[\S3.2]{vanHandel2016} and the references therein. In the following, we present a functional inequality using martingales, in the spirit of Milman--Schechtman inequality \cite[I, Thm.~7.12]{milman1986asymptotic} (see also \cite[Thm.~4.5.3]{Pestov2006}), that is designed to control Wasserstein stability. Recently Schneider established such an inequality in terms of invariant means \cite[Thm.~1.1, Cor.~4.9]{schneider2023concentration}.

\smallskip

Those concentration inequalities, Azuma's inequality and Milman--Schechtman--Schneider inequalities, are governed by an \(\ell^{2}\)-sum of martingale increments. In contrast, our martingale argument for this functional inequality is governed by an \(\ell^{1}\)-sum of martingale increments, which reflects the stronger nature of Wasserstein stability over classical concentration.

\smallskip

For a measure metric group \(\left(K,d,m\right)\) and constants \(r,l>0\), consider the functional inequality
\begin{equation}\label{eq:funineq}\tag{\(\lozenge\)}
\mathbf{w}_{d}\left(\Phi\right)\leq l\mathbf{v}_{m}\left(\Phi,r\right),
\end{equation}
for all \(\mathcal{D}\left(K,m\right)\)-valued measurable kernel \(\Phi\). Equivalently, the Wasserstein modulus satisfies
\[\Omega_{\left(K,d,m\right)}\left(r,\epsilon\right)\leq l\epsilon,\quad\epsilon>0.\]

\smallskip

We now present the basics we need to establish the functional inequality \eqref{eq:funineq}.

\begin{defn}\label{dfn:marchain}
Let \(\left(K,d,m\right)\) be a measure metric group. A {\bf martingale chain} is a sequence
\[\left\{e_{K}\right\}=H_{0}\leq H_{1}\leq\dotsm\leq H_{N}=K\]
of compact subgroups. For such a martingale chain, define the quotient radii
\[\rho_{i}:=\sup_{g\in H_{i}}d\left(e_{K},gH_{i-1}\right)=\sup_{g\in H_{i}}\inf_{h\in H_{i-1}}d\left(e_{K},gh\right),\quad 1\leq i\leq N,\]
and define the {\bf length} and the {\bf scale} of the martingale chain by
\[\ell:=\sum\nolimits_{i=1}^{N}\rho_{i}\quad\text{and}\quad\varrho:=\max_{1\leq i\leq N}\rho_{i}.\]
\end{defn}

\begin{rem}
For a martingale chain as above, Schneider associates the quantities
\[\delta_{i}:=\sup\nolimits_{g\in K}\operatorname{diam}\big(H_{i}/H_{i-1},d^{g}_{K/H_{i-1}}\big),\quad d^{g}_{K/H_{i-1}}\left(xH_{i-1},yH_{i-1}\right):=\inf\nolimits_{h\in H_{i-1}}d\left(gx,gyh\right).\]
Our radii \(\rho_{i}\) are measured in the same quotient geometry, and satisfy
\[\rho_{i}\leq\operatorname{diam}\big(H_{i}/H_{i-1},d^{e_{K}}_{K/H_{i-1}}\big)\leq\delta_{i}.\]
Note that when \(d\) is bi-invariant, the dependence on \(g\) disappears and \(\rho_{i}=\delta_{i}\). Similarly to Azuma's inequality, the Milman--Schechtman--Schneider inequalities are governed by the \(\ell^{2}\)-sum \(\left(\sum_{i}\delta_{i}^{2}\right)^{1/2}\), while our inequality is governed by the \(\ell^{1}\)-sum \(\sum_{i}\rho_{i}\). See \cite[I, Thm.~7.12(i)]{milman1986asymptotic}, \cite[Def.~4.6, Thm.~4.8, Cor.~4.9]{schneider2023concentration}.
\end{rem}

\begin{prop}\label{prop:MSS}
Let \(\left(K,d,m\right)\) be a measure metric group admitting a martingale chain with length \(\ell\) and scale \(\varrho\). Then \(\left(K,d,m\right)\) satisfies the functional inequality \eqref{eq:funineq} with constants \(r=\varrho,\,l=2\ell\), namely
\[\Omega_{\left(K,d,m\right)}\left(\varrho,\epsilon\right)\leq2\ell\epsilon,\quad\epsilon>0.\]
\end{prop}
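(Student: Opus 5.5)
The plan is to telescope the Wasserstein distance along the martingale chain, applying the martingale to the \emph{density} rather than to the test function. Fix a kernel \(\Phi=\{\phi_{z}\}\). It suffices to show, for a single density \(\phi\in\mathcal{D}(K,m)\), that
\[\mathbf{W}_{d}(\phi m,m)\leq 2\sum_{i=1}^{N}\rho_{i}\cdot\sup_{h\in B_{d}(\varrho)}\mathbf{V}_{m}(h.\phi,\phi),\]
stated in a form that survives integration in \(z\). Concretely, the per-step bound should be an average over \(h\in H_{i}\) of quantities \(\mathbf{V}_{m}(k_{h}.\phi,\phi)\) with \(k_{h}\in B_{d}(\rho_{i})\). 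Integrating over \(Z\) before taking the supremum over \(h\) then gives the kernel statement, with \(\rho_{i}\leq\varrho\) used to bound each averaged term by \(\mathbf{v}_{m}(\Phi,\varrho)\).

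\textbf{Step 1 (martingale of the density).} Since \(h.\phi(g)=\phi(h^{-1}g)\) is left translation, and right-invariance of \(d\) gives \(d(hg,g)=d(h,e_{K})\), the natural cosets are the right cosets \(H_{i}g\). Let \(m_{H_{i}}\) be the Haar probability measure of \(H_{i}\) and set
\[\phi_{i}(g):=\int_{H_{i}}\phi(hg)\,dm_{H_{i}}(h)=\int_{H_{i}}(h^{-1}.\phi)(g)\,dm_{H_{i}}(h),\]
which is the conditional expectation of \(\phi\) onto left-\(H_{i}\)-invariant functions. Then \(\phi_{0}=\phi\) and \(\phi_{N}\equiv1\), so by the triangle inequality
\[\mathbf{W}_{d}(\phi m,m)\leq\sum_{i=1}^{N}\mathbf{W}_{d}(\phi_{i-1}m,\phi_{i}m).\]

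\textbf{Step 2 (one step costs \(\rho_{i}\) per unit of moved mass).} We have \(\phi_{i}=\int_{H_{i}}h^{-1}.\phi_{i-1}\,dm_{H_{i}}(h)\), and \(\phi_{i-1}\) is left-\(H_{i-1}\)-invariant. Hence \(h^{-1}.\phi_{i-1}\) depends only on the coset \(h^{-1}H_{i-1}\) (because \(h'h^{-1}.\phi_{i-1}=h^{-1}.\phi_{i-1}\) for \(h'\in H_{i-1}\)), so we may replace \(h^{-1}\) by a representative \(k_{h}\in h^{-1}H_{i-1}\) with \(d(e_{K},k_{h})\leq\rho_{i}\). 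Such a representative exists because the infimum in the definition of \(\rho_{i}\) is attained by compactness of \(H_{i-1}\); it should be chosen measurably in \(h\), or one can accept an arbitrarily small loss. Thus \(\phi_{i}m=\int_{H_{i}}(k_{h}.\phi_{i-1})m\,dm_{H_{i}}(h)\). Since \(\mathbf{W}_{d}\) is convex, it remains to bound \(\mathbf{W}_{d}((k.\phi_{i-1})m,\phi_{i-1}m)\) for \(d(k,e_{K})\leq\rho_{i}\).

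For this, decompose \(\psi:=\phi_{i-1}\) and \(k.\psi\) as common mass \(\min(\psi,k.\psi)\) plus the excess parts. The excess of \(k.\psi\) is the left translate by \(k\) of an excess part of \(\psi\). Moving each point \(g\) to \(kg\) costs \(d(kg,g)=d(k,e_{K})\leq\rho_{i}\), so this coupling yields
\[\mathbf{W}_{d}((k.\psi)m,\psi m)\leq 2\rho_{i}\,\mathbf{V}_{m}(k.\psi,\psi),\]
with the factor \(2\) coming from the common mass being matched by the same translation, i.e. a crude bound via \(\tfrac12\mathbb{E}|\cdot|\). Equivalently, in dual form: for \(1\)-Lipschitz \(f\), write \(\int f\,(k.\psi-\psi)\,dm=\int(f(kg)-f(g))\psi(g)\,dm\) and control it through \(|\psi-k.\psi|\). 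The constant \(2\ell\) is what I expect this to produce.

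\textbf{Step 3 (back to \(\phi\)).} Since \(\psi=\phi_{i-1}\) is an average of translates \(h'.\phi\) with \(h'\in H_{i-1}\), convexity of \(\mathbf{V}_{m}\) gives
\[\mathbf{V}_{m}(k.\psi,\psi)\leq\int_{H_{i-1}}\mathbf{V}_{m}(k h'.\phi,h'.\phi)\,dm_{H_{i-1}}(h').\]
Here I expect the main obstacle. Left-invariance of \(m\) turns this into \(\mathbf{V}_{m}(h'^{-1}kh'.\phi,\phi)\), and the conjugate \(h'^{-1}kh'\) need not lie in \(B_{d}(\rho_{i})\) when \(d\) is only right-invariant. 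My first attempt is to exploit the freedom in the representative: instead of fixing \(k\) before averaging, redo Step 2 with \(\phi_{i}=\int_{H_{i-1}}\int_{H_{i}}(hh').\phi\) and choose the short representative of the combined element \(hh'\in H_{i}\) relative to the coset structure, so that the translation applied directly to \(\phi\) is short. This works provided I arrange that the relevant averages are over the right-hand cosets \(gH_{i-1}\) appearing in the definition of \(\rho_{i}\); if the sides still mismatch, I would swap to averaging over right cosets in Step 1 and recheck Step 2 using \(d(g,g')=d(gh,g'h)\). Once this is done, summing over \(i\) and integrating in \(z\) gives \(\mathbf{w}_{d}(\Phi)\leq2\ell\,\mathbf{v}_{m}(\Phi,\varrho)\).
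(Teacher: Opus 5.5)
Your telescoping $\mathbf{W}_{d}(\phi m,m)\leq\sum_{i}\mathbf{W}_{d}(\phi_{i-1}m,\phi_{i}m)$ along the density martingale is a sound starting point and is compatible with the constant $2\ell$. The gap is that the per-step estimate is never established: Step 2 is argued incorrectly, and Step 3 is left open.

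\textbf{Step 2.} The coupling is not the one you describe. We have $(k.\psi-\psi)_{+}=k.(\psi-k^{-1}.\psi)_{+}$, which is the translate of the excess of $\psi$ over $k^{-1}.\psi$, not over $k.\psi$. The dual rewriting $\int(f(kg)-f(g))\psi(g)\,dm(g)$ also produces no factor $\|k.\psi-\psi\|_{L^{1}}$. More importantly, your argument uses only $d(k,e_{K})\leq\rho_{i}$ and never the left $H_{i-1}$-invariance of $\psi=\phi_{i-1}$. It would therefore prove $\mathbf{W}_{d}((k.\psi)m,\psi m)\leq2d(k,e_{K})\mathbf{V}_{m}(k.\psi,\psi)$ for every density $\psi$ and every $k$, and that is false. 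Take $\mathbb{R}/\mathbb{Z}$, $\psi(x)=1+a\cos2\pi x$ with $0<a\leq1$, and $k=t$. Then $\mathbf{V}_{m}(k.\psi,\psi)=\frac{2a}{\pi}\sin\pi t$. Testing against the $1$-Lipschitz function $\frac{1}{2\pi}\sin(2\pi x-\pi t)$ gives $\mathbf{W}_{d}((k.\psi)m,\psi m)\geq\frac{a}{2\pi}\sin\pi t=\frac14\mathbf{V}_{m}(k.\psi,\psi)$, which exceeds $2t\,\mathbf{V}_{m}(k.\psi,\psi)$ once $t<1/8$.

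\textbf{Step 3.} As you anticipate, $k.\phi_{i-1}(g)=\int_{H_{i-1}}\phi(h'k^{-1}g)\,dm_{H_{i-1}}(h')$ puts the short element to the right of the $H_{i-1}$-average. The convexity bound therefore produces the conjugates $h'^{-1}kh'$, which need not lie in $B_{d}(\varrho)$ when $d$ is only right-invariant. The repairs you sketch are not carried out.

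\textbf{The missing idea.} Avoid translates of $\phi_{i-1}$ altogether. Bound each step by $L^{\infty}$--$L^{1}$ duality of martingale differences, and estimate $\|\phi_{i-1}-\phi_{i}\|_{L^{1}(m)}$ directly.
\begin{itemize}
\item For $1$-Lipschitz $f$: since $\phi_{i-1}-\phi_{i}$ is $\mathcal{I}_{i-1}$-measurable and $E_{i}(\phi_{i-1}-\phi_{i})=0$, one has $\int_{K}f\,(\phi_{i-1}-\phi_{i})\,dm=\int_{K}(E_{i-1}f-E_{i}f)(\phi_{i-1}-\phi_{i})\,dm$. Hence $\mathbf{W}_{d}(\phi_{i-1}m,\phi_{i}m)\leq\sup_{f}\|E_{i-1}f-E_{i}f\|_{L^{\infty}}\cdot\|\phi_{i-1}-\phi_{i}\|_{L^{1}(m)}$.
\item Both factors are controlled by Weil's formula \eqref{eq:Weil}. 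It decomposes $H_{i}=\bigsqcup_{q}\sigma_{i}(q)H_{i-1}$, with short representatives of the \emph{left} cosets placed on the left, exactly as in the definition of $\rho_{i}$.
\item For $f$, this gives $\|E_{i-1}f-E_{i}f\|_{L^{\infty}}\leq\rho_{i}$, because $d(\sigma_{i}(q)hg,hg)=d(\sigma_{i}(q),e_{K})$.
\item For $\phi$, the substitution $g\mapsto h^{-1}g$ turns $\int_{K}|\phi(\sigma_{i}(q)hg)-\phi(hg)|\,dm(g)$ into $\|\sigma_{i}(q)^{-1}.\phi-\phi\|_{L^{1}(m)}$, with no conjugation. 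This yields $\|\phi_{i-1}-\phi_{i}\|_{L^{1}(m)}\leq\int_{H_{i}/H_{i-1}}2\mathbf{V}_{m}(\sigma_{i}(q)^{-1}.\phi,\phi)\,d\bar{m}_{i}(q)$.
\end{itemize}
Summing over $i$, and applying Fubini in $z$ before bounding by $\mathbf{v}_{m}(\Phi,\varrho)$, gives $\mathbf{w}_{d}(\Phi)\leq2\ell\,\mathbf{v}_{m}(\Phi,\varrho)$. This is the paper's \cref{lem:martcont}, which packages the steps into the single identity $\int_{K}f(\phi-1)\,dm=\sum_{i}\int_{K}\Delta_{i}f\,\Delta_{i}\phi\,dm$.
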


We first define some basic objects. Given a martingale chain \(\left\{e_{K}\right\}=H_{0}\leq H_{1}\leq\dotsm\leq H_{N}=K\), for \(1\leq i\leq N\) let \(\mathcal{I}_{i}\) be the sub-\(\sigma\)-algebra of \(\mathcal{B}\left(K\right)\) consisting of left \(H_{i}\)-invariant sets, and put the associated conditional expectation
\[E_{i}:=\mathbb{E}_{m}\left[\cdot\mid\mathcal{I}_{i}\right].\]
The association \(q\to q\cap\overline{B}_{d}\left(e_{K},\rho_{i}\right)\) defines a Borel map from the compact space \(H_{i}/H_{i-1}\) to the space of compact subsets of \(H_{i}\). Then by the Kuratowski--Ryll-Nardzewski selection theorem \cite[Thm.~(12.13)]{kechris2012descriptive}, there is a Borel section
\[\sigma_{i}:H_{i}/H_{i-1}\to H_{i}\]
such that
\[\sigma_{i}\left(q\right)\in q\quad\text{and}\quad d\left(e_{K},\sigma_{i}\left(q\right)\right)\leq\rho_{i},\quad q\in H_{i}/H_{i-1}.\]
Let \(\bar{m}_{i}\) be the quotient Haar probability measure on the left coset space \(H_{i}/H_{i-1}\). By Weil's integration formula \cite[Thm.~(2.56)]{folland1995course}, for every \(f\in L^{1}\left(K,m\right)\) we have the following version of \(E_{i}f\):
\begin{equation}\label{eq:Weil}
E_{i}f\left(g\right)=\int_{H_{i}}f\left(hg\right)dm_{H_{i}}\left(h\right)=\int_{H_{i}/H_{i-1}}\int_{H_{i-1}}f\left(\sigma_{i}\left(q\right)hg\right)dm_{H_{i-1}}\left(h\right)d\bar{m}_{i}\left(q\right)\quad m\text{-a.e. }g\in K.
\end{equation}

\begin{lem}\label{lem:martcont}
In the setup above, for every \(1\leq i\leq N\) the following hold.
\begin{itemize}
    \item For every \(1\)-Lipschitz function \(f:K\to\mathbb{R}\),
    \[\left\|E_{i}f-E_{i-1}f\right\|_{L^{\infty}\left(K\right)}\leq\rho_{i}.\]
    \item For every \(\phi\in L^{1}\left(K,m\right)\),
    \[\left\|E_{i}\phi-E_{i-1}\phi\right\|_{L^{1}\left(m\right)}\leq\int_{H_{i}/H_{i-1}}2\mathbf{V}_{m}\big(\sigma_{i}\left(q\right)^{-1}.\phi,\phi\big)d\bar{m}_{i}\left(q\right).\]
\end{itemize}
Consequently, for every density \(\phi\in\mathcal{D}\left(K,m\right)\),
\[\mathbf{W}_{d}\left(\phi m,m\right)\leq\sum\nolimits_{i=1}^{N}2\rho_{i}\int_{H_{i}/H_{i-1}}\mathbf{V}_{m}\big(\sigma_{i}\left(q\right)^{-1}.\phi,\phi\big)d\bar{m}_{i}\left(q\right).\]
\end{lem}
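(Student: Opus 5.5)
The plan is to prove the two bullets directly from the Weil formula \eqref{eq:Weil}, using right-invariance of \(d\) for the first and Fubini together with \(L^{1}\)-contractivity of conditional expectation for the second. The consequence then follows from a telescoping martingale-difference decomposition paired against \(\phi\). Throughout, \(\mathcal{I}_{0}\) is the full Borel \(\sigma\)-algebra (since \(H_{0}=\{e_{K}\}\)), so \(E_{0}\) is the identity, and \(E_{N}f=\mathbb{E}_{m}[f]\) is constant. Since \(H_{i-1}\leq H_{i}\), the \(\sigma\)-algebras decrease, \(\mathcal{I}_{i}\subseteq\mathcal{I}_{i-1}\), hence \(E_{i}E_{i-1}=E_{i-1}E_{i}=E_{i}\).

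\textbf{First bullet.} For \(f\) that is \(1\)-Lipschitz, write \(E_{i-1}f(g)=\int_{H_{i-1}}f(hg)\,dm_{H_{i-1}}(h)\) and express \(E_{i}f(g)\) by \eqref{eq:Weil}. Since \(\bar{m}_{i}\) is a probability measure, this gives
\[E_{i}f(g)-E_{i-1}f(g)=\int_{H_{i}/H_{i-1}}\int_{H_{i-1}}\big(f(\sigma_{i}(q)hg)-f(hg)\big)\,dm_{H_{i-1}}(h)\,d\bar{m}_{i}(q).\]
By right-invariance, \(d(\sigma_{i}(q)hg,hg)=d(\sigma_{i}(q),e_{K})\leq\rho_{i}\). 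Hence the integrand is bounded by \(\rho_{i}\), and so is the difference, for \(m\)-a.e.\ \(g\).

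\textbf{Second bullet.} With the convention \((\sigma^{-1}.\phi)(x)=\phi(\sigma x)\), the inner integral is
\[\int_{H_{i-1}}\phi(\sigma_{i}(q)hg)\,dm_{H_{i-1}}(h)=E_{i-1}\big(\sigma_{i}(q)^{-1}.\phi\big)(g).\]
Therefore
\[E_{i}\phi-E_{i-1}\phi=\int_{H_{i}/H_{i-1}}E_{i-1}\big(\sigma_{i}(q)^{-1}.\phi-\phi\big)\,d\bar{m}_{i}(q).\]
Taking \(L^{1}(m)\)-norms, we use Minkowski's integral inequality and the \(L^{1}\)-contractivity of \(E_{i-1}\). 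This bounds the left side by \(\int\|\sigma_{i}(q)^{-1}.\phi-\phi\|_{L^{1}}\,d\bar{m}_{i}(q)\), which equals \(\int2\mathbf{V}_{m}(\sigma_{i}(q)^{-1}.\phi,\phi)\,d\bar{m}_{i}(q)\).

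The only delicate point, which I expect to be the main (though routine) obstacle, is joint measurability of \((q,h,g)\mapsto\phi(\sigma_{i}(q)hg)\). This is needed to justify Fubini and the pointwise identity a.e. It holds because \(\sigma_{i}\) is Borel and the group multiplication is continuous. To avoid subtleties with \(L^{1}\) representatives, one can first argue for continuous \(\phi\) and then pass to general \(\phi\) by density: both sides are \(L^{1}\)-continuous in \(\phi\), since translation is an \(L^{1}\)-isometry and \(\mathbf{V}_{m}\) is \(1\)-Lipschitz in each argument.

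\textbf{Consequence.} Put \(D_{i}:=E_{i-1}-E_{i}\). By the nesting relations, \(D_{i}\) is an orthogonal projection on \(L^{2}(m)\), and it is self-adjoint with respect to the pairing between bounded \(f\) and \(L^{1}\) densities. Fix a \(1\)-Lipschitz \(f\), which is bounded because \(K\) is compact. Since \(E_{0}f=f\) and \(E_{N}f=\mathbb{E}_{m}[f]\) with \(\mathbb{E}_{m}[\phi]=1\),
\[\mathbb{E}_{\phi m}[f]-\mathbb{E}_{m}[f]=\sum_{i=1}^{N}\int_{K}\phi\,D_{i}f\,dm=\sum_{i=1}^{N}\int_{K}(D_{i}\phi)(D_{i}f)\,dm,\]
where the last step uses \(D_{i}^{2}=D_{i}\) and self-adjointness. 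Each term is at most \(\|D_{i}\phi\|_{L^{1}}\|D_{i}f\|_{L^{\infty}}\). The two bullets bound these factors by \(\int2\mathbf{V}_{m}(\sigma_{i}(q)^{-1}.\phi,\phi)\,d\bar{m}_{i}\) and \(\rho_{i}\), respectively. Taking the supremum over \(1\)-Lipschitz \(f\) yields the stated bound on \(\mathbf{W}_{d}(\phi m,m)\).
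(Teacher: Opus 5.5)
Your proposal is correct and follows the paper's proof essentially step for step. Both bullets come from Weil's formula together with right-invariance of \(d\) and invariance of Haar measure; your appeal to \(L^{1}\)-contractivity of \(E_{i-1}\) is the same computation as the paper's change of variables \(g\mapsto hg\). The consequence is the same martingale-difference argument: you move \(D_{i}\) onto \(\phi\) via \(D_{i}^{2}=D_{i}\) and self-adjointness, whereas the paper expands both \(f\) and \(\phi\) telescopically and kills the cross terms \(\int_{K}\Delta_{i}f\cdot\Delta_{j}\phi\,dm\) for \(i\neq j\), which is the same orthogonality fact.
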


\begin{proof}[Proof of \cref{lem:martcont}]
Fix \(1\leq i\leq N\). Using Weil's integration formula \eqref{eq:Weil} and the right-invariance of \(d\), for every \(1\)-Lipschitz function \(f:K\to\mathbb{R}\) and \(m\)-a.e. \(g\in K\) we have
\begin{align*}
\left|E_{i}f\left(g\right)-E_{i-1}f\left(g\right)\right|
&\leq\int_{H_{i}/H_{i-1}}\int_{H_{i-1}}\left|f\left(\sigma_{i}\left(q\right)hg\right)-f\left(hg\right)\right|dm_{H_{i-1}}\left(h\right)d\bar{m}_{i}\left(q\right)\\
&\leq\int_{H_{i}/H_{i-1}}\int_{H_{i-1}}d\left(\sigma_{i}\left(q\right)hg,hg\right)dm_{H_{i-1}}\left(h\right)d\bar{m}_{i}\left(q\right)\\
&=\int_{H_{i}/H_{i-1}}d\left(e_{K},\sigma_{i}\left(q\right)\right)d\bar{m}_{i}\left(q\right)\\
&\leq\rho_{i}.
\end{align*}
This shows that \(\left\|E_{i}f-E_{i-1}f\right\|_{L^{\infty}\left(K\right)}\leq\rho_{i}\). Similarly, for every \(\phi\in L^{1}\left(K,m\right)\) we have
\begin{align*}
\left\|E_{i}\phi-E_{i-1}\phi\right\|_{L^{1}\left(m\right)}
&\leq\int_{H_{i}/H_{i-1}}\int_{H_{i-1}}\int_{K}\left|\phi\left(\sigma_{i}\left(q\right)hg\right)-\phi\left(hg\right)\right|dm\left(g\right)dm_{H_{i-1}}\left(h\right)d\bar{m}_{i}\left(q\right)\\
&=\int_{H_{i}/H_{i-1}}\big\|\sigma_{i}\left(q\right)^{-1}.\phi-\phi\big\|_{L^{1}\left(m\right)}d\bar{m}_{i}\left(q\right)=\int_{H_{i}/H_{i-1}}2\mathbf{V}_{m}\big(\sigma_{i}\left(q\right)^{-1}.\phi,\phi\big)d\bar{m}_{i}\left(q\right).
\end{align*}

Now let \(\phi\in\mathcal{D}\left(K,m\right)\), and fix an arbitrary \(1\)-Lipschitz function \(f:K\to\mathbb{R}\). Put
\[\Delta_{i}f:=E_{i-1}f-E_{i}f\quad\text{and}\quad\Delta_{i}\phi:=E_{i-1}\phi-E_{i}\phi.\]
Using \(E_{0}f=f\), \(E_{N}f=\int_{K}fdm\), \(E_{0}\phi=\phi\), and \(E_{N}\phi=1\), we have the telescoping identities
\[f-\int_{K}fdm=E_{0}f-E_{N}f=\sum\nolimits_{i=1}^{N}\Delta_{i}f,\quad\text{and}\quad\phi-1=E_{0}\phi-E_{N}\phi=\sum\nolimits_{i=1}^{N}\Delta_{i}\phi.\]
We now claim to have the martingale telescoping identity
\begin{equation}
\begin{aligned}
\int_{K}f\cdot\left(\phi-1\right)dm
&=\int_{K}\left(f-E_{N}f\right)\cdot\left(\phi-E_{N}\phi\right)dm\\
&=\sum\nolimits_{i,j=1}^{N}\int_{K}\Delta_{i}f\cdot\Delta_{j}\phi dm=\sum\nolimits_{i=1}^{N}\int_{K}\Delta_{i}f\cdot\Delta_{i}\phi dm.
\end{aligned}\label{eq:MTID}
\end{equation}
Let us justify this computation. The first equality follows because \(\int_{K}\left(\phi-1\right)dm=0\). The second equality follows by the above telescoping identities. For the third equality, we need the standard fact
\[\int_{K}\Delta_{i}f\cdot\Delta_{j}\phi dm=0\,\text{ for all }i\neq j;\]
let us consider the case \(i<j\) (the case \(j<i\) is analogous): by definition \(\Delta_{j}\phi\) is \(\mathcal{I}_{j-1}\)-measurable, and since \(i<j\) we have \(\mathcal{I}_{j-1}\subseteq\mathcal{I}_{i}\), and so \(\Delta_{j}\phi\) is also \(\mathcal{I}_{i}\)-measurable. Then by the law of total expectation
\[E_{i}\Delta_{i}f=E_{i}\left[E_{i-1}f-E_{i}f\right]=E_{i}f-E_{i}f=0,\]
and hence, using again the law of total expectation, we deduce that
\[\int_{K}\Delta_{i}f\cdot\Delta_{j}\phi dm=\int_{K}E_{i}\left[\Delta_{i}f\cdot\Delta_{j}\phi\right]dm=\int_{K}E_{i}\Delta_{i}f\cdot\Delta_{j}\phi dm=0.\]
This justifies the third equality, and so \eqref{eq:MTID} is established. We can now complete the proof:
\begin{align*}
\left|\mathbb{E}_{\phi m}\left[f\right]-\mathbb{E}_{m}\left[f\right]\right|	
&=\Big|\int_{K}f\cdot\left(\phi-1\right)dm\Big|\leq\sum\nolimits_{i=1}^{N}\Big|\int_{K}\Delta_{i}f\cdot\Delta_{i}\phi dm\Big|\\
&\leq\sum\nolimits_{i=1}^{N}\left\Vert\Delta_{i}f\right\Vert_{L^{\infty}\left(K\right)}\cdot\left\Vert\Delta_{i}\phi\right\Vert_{L^{1}\left(m\right)}\leq\sum\nolimits_{i=1}^{N}2\rho_{i}\int_{H_{i}/H_{i-1}}\mathbf{V}_{m}\big(\sigma_{i}\left(q\right)^{-1}.\phi,\phi\big)d\bar{m}_{i}\left(q\right),
\end{align*}
where the first inequality is by \eqref{eq:MTID}, and the third inequality is by the first two parts of the lemma. Finally, since \(f\) is arbitrary and the right hand-side is independent of \(f\), the desired inequality follows.
\end{proof}

\begin{proof}[Proof of \cref{prop:MSS}]
Let \(\Phi=\{\phi_{z}:z\in Z\}\) be a \(\mathcal{D}\left(K,m\right)\)-valued measurable kernel. By \cref{lem:martcont} applied to each \(\phi_{z}\) and by Fubini's theorem,
\[\mathbf{w}_{d}\left(\Phi\right)\leq\sum\nolimits_{i=1}^{N}2\rho_{i}\int_{H_{i}/H_{i-1}}\int_{Z}\mathbf{V}_{m}\big(\sigma_{i}\left(q\right)^{-1}.\phi_{z},\phi_{z}\big)d\zeta\left(z\right)d\bar{m}_{i}\left(q\right).\]
For every \(1\leq i\leq N\) and \(q\in H_{i}/H_{i-1}\), we have by construction \(\sigma_{i}\left(q\right)^{-1}\in B_{d}\left(\varrho\right)\). Therefore,
\[\int_{Z}\mathbf{V}_{m}\big(\sigma_{i}\left(q\right)^{-1}.\phi_{z},\phi_{z}\big)d\zeta\left(z\right)\leq\mathbf{v}_{m}\left(\Phi,\varrho\right).\]
Both estimates give
\[\mathbf{w}_{d}\left(\Phi\right)\leq\sum\nolimits_{i=1}^{N}2\rho_{i}\mathbf{v}_{m}\left(\Phi,\varrho\right)=2\ell\mathbf{v}_{m}\left(\Phi,\varrho\right).\]
This establishes the functional inequality \eqref{eq:funineq} with \(r=\varrho,\,l=2\ell\).
\end{proof}

As we shall explain soon, after establishing the notion of Wasserstein group, the following corollary implies \cref{mthm2}:

\begin{cor}\label{cor:qutifilt}
Let \(\left(K_{n},d_{n},m_{n}\right)_{n\geq1}\) be a sequence of measure metric groups. Suppose that each \(\left(K_{n},d_{n},m_{n}\right)\) admits a martingale chain with length \(\ell_{n}\) and scale \(\varrho_{n}\), such that
\[\sup\nolimits_{n\geq1}\ell_{n}<+\infty\quad\text{and}\quad \varrho_{n}\to0.\]
Then \(\left(K_{n},d_{n},m_{n}\right)_{n\geq1}\) is a Wasserstein family.
\end{cor}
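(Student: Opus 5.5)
The plan is to apply \cref{prop:MSS} to each group separately and then read off the definition of a Wasserstein family with the sequence \(r_{n}:=\varrho_{n}\). Put \(L:=\sup_{n\geq1}\ell_{n}<+\infty\). \Cref{prop:MSS} gives, for every \(n\) and every \(\epsilon>0\),
\[\Omega_{\left(K_{n},d_{n},m_{n}\right)}\left(\varrho_{n},\epsilon\right)\leq2\ell_{n}\epsilon\leq2L\epsilon.\]
So if \(\epsilon_{n}\to0\), then \(\Omega_{\left(K_{n},d_{n},m_{n}\right)}\left(\varrho_{n},\epsilon_{n}\right)\leq2L\epsilon_{n}\to0\). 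This is exactly the defining condition with \(r_{n}=\varrho_{n}\to0\).

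The only point needing care is that the definition asks for a \emph{positive} sequence \(r_{n}\to0\). The scale \(\varrho_{n}\) could vanish, for instance when \(K_{n}\) is trivial or the chain has only trivial steps. To handle this, I would take \(r_{n}:=\max\{\varrho_{n},1/n\}\), which is positive and still tends to \(0\).

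This replacement costs nothing, because \(\mathbf{v}_{m}\left(\Phi,r\right)\) is nondecreasing in \(r\) (the supremum runs over a larger ball). Hence the constraint \(\mathbf{v}_{m}\left(\Phi,r_{n}\right)\leq\epsilon\) implies \(\mathbf{v}_{m}\left(\Phi,\varrho_{n}\right)\leq\epsilon\). It follows that \(\Omega\left(r,\epsilon\right)\) is nonincreasing in \(r\), so
\[\Omega_{\left(K_{n},d_{n},m_{n}\right)}\left(r_{n},\epsilon\right)\leq\Omega_{\left(K_{n},d_{n},m_{n}\right)}\left(\varrho_{n},\epsilon\right)\leq2L\epsilon.\]
If \(\varrho_{n}=0\), the ball \(B_{d}(\varrho_{n})\) is empty or contains only the identity, and the bound from \cref{lem:martcont} still goes through. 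In that case the chain is trivial and \(K_{n}=\{e\}\), so \(\mathbf{w}\equiv0\) anyway.

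There is no real obstacle: all the analytic work is in \cref{prop:MSS}, and the corollary is just the uniform bound on \(\ell_{n}\) combined with the monotonicity of \(\Omega\) in \(r\).
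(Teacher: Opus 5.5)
Your proposal is correct and is essentially the paper's proof. With \(L:=\sup_{n}\ell_{n}\), \cref{prop:MSS} gives \(\Omega_{\left(K_{n},d_{n},m_{n}\right)}\left(\varrho_{n},\epsilon\right)\leq2L\epsilon\), so \(r_{n}=\varrho_{n}\) witnesses the Wasserstein property; your replacement of \(\varrho_{n}\) by \(\max\{\varrho_{n},1/n\}\), justified by the monotonicity of \(\Omega\) in \(r\) and the observation that \(\varrho_{n}=0\) forces \(K_{n}=\{e\}\), correctly handles the positivity requirement that the paper leaves implicit.
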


\begin{proof}[Proof of \cref{cor:qutifilt}]
Let \(L:=\sup_{n\geq1}\ell_{n}\). By \cref{prop:MSS}, for every \(\epsilon>0\),
\[\Omega_{\left(K_{n},d_{n},m_{n}\right)}\left(\varrho_{n},\epsilon\right)\leq2L\epsilon.\]
Thus, \(\Omega_{\left(K_{n},d_{n},m_{n}\right)}\left(\varrho_{n},\epsilon_{n}\right)\to0\) whenever \(\epsilon_{n}\to0\).
\end{proof}

\begin{exm}\label{exm:findimunit}
We illustrate Wasserstein stability for finite dimensional matrix groups. Let \(K_{n}=U\left(n\right)\) be the unitary group of an \(n\)-dimensional Hilbert space, let \(m_{n}\) be its Haar measure, and let the metric
\[d_{n}\left(u,v\right):=\operatorname{tr}_{n}\left(\left|u-v\right|\right),\]
where \(\operatorname{tr}_{n}:=\frac{1}{n}\operatorname{Tr}\) is the normalized trace on \(M_{n}\left(\mathbb{C}\right)\), and \(\left|a\right|:=\left(a^{*}a\right)^{1/2}\). Consider the subgroup chain
\[\left\{\mathrm{I}_{n}\right\}=U_{n}\left(0\right)\leq U_{n}\left(1\right)\leq\dotsm\leq U_{n}\left(n\right)=U\left(n\right),\]
where \(U_{n}\left(i\right)\cong U\left(i\right)\) is embedded in the upper-left corner. Every coset \(uU_{n}\left(i-1\right)\in U_{n}\left(i\right)/U_{n}\left(i-1\right)\) is determined by the unit vector \(u e_{i}\in\operatorname{span}\left\{e_{1},\dotsc,e_{i}\right\}\). Choose a unitary representative \(s\) in this coset which sends \(e_{i}\) to \(u e_{i}\) and is the identity on the orthogonal complement of \(\operatorname{span}\left\{e_{i},u e_{i}\right\}\). Then \(s-\mathrm{I}_{n}\) has rank at most \(2\), and therefore
\[\left\Vert\mathrm{I}_{n}-s\right\Vert_{1}\leq\mathrm{rank}\left(\mathrm{I}_{n}-s\right)\cdot\left\Vert\mathrm{I}_{n}-s\right\Vert_{\text{op}}\leq2\cdot\left(\left\Vert\mathrm{I}_{n}\right\Vert_{\text{op}}+\left\Vert s\right\Vert_{\text{op}}\right)\leq4,\]
where \(\left\|\cdot\right\|_{1}\) denotes the trace norm. It follows that
\[d_{n}\left(\mathrm{I}_{n},s\right)=\frac{1}{n}\left\|s-\mathrm{I}_{n}\right\|_{1}\leq\frac{4}{n}.\]
Thus the chain has scale at most \(4/n\) and length at most \(4\). Therefore, by \cref{prop:MSS} we obtain
\[\Omega_{\left(U\left(n\right),d_{n},m_{n}\right)}\left(4/n,\epsilon\right)\leq8\epsilon,\quad\epsilon>0.\]
Then in accordance with \cref{cor:qutifilt}, we get that \(\left(U\left(n\right),d_{n},m_{n}\right)_{n\geq 1}\) forms a Wasserstein family.
\end{exm}

\begin{rem}
This argument can be used to show that the unitary group of the hyperfinite \(\mathrm{II}_{1}\)-factor is a \emph{Wasserstein group} (see \cref{sct:unithyperf}). However, it does not make \(U\left(\mathcal{H}\right)\), the unitary group of an infinite dimensional Hilbert space, a \emph{Wasserstein group}. The reason is that the normalized trace-norm metrics
\[d_{n}\left(u,v\right)=\operatorname{tr}_{n}\left(\left|u-v\right|\right)\]
on \(U\left(n\right)\) are not induced from one compatible metric on \(U\left(\mathcal{H}\right)\). This is analogous to that \(\left(S_{n},d_{n},m_{n}\right)_{n\geq 1}\) is a L\'{e}vy family, where \(S_{n}\) is the finite symmetric group on \(n\) elements with the Hamming metric
\[d_{n}\left(\sigma,\tau\right):=\frac{1}{n}\left|\left\{1\leq i\leq n:\sigma\left(i\right)\neq\tau\left(i\right)\right\}\right|,\]
and still it does not make the Polish group \(S_{\infty}\) of all permutations of an infinite set, with the topology of pointwise convergence, into a L\'{e}vy group; see the discussion in \cite[p.~91]{Pestov2006}. In fact, the non-Archimedean Polish group \(S_{\infty}\) is far from being a L\'{e}vy group in a few aspects \cite{kechris2014dynamics,glasner2005spatial}.
\end{rem}

\section{Wasserstein and locally Wasserstein Polish groups}\label{sct:Wass}

Let \(G\) be a Polish group equipped with a compatible right-invariant metric \(d\). A {\bf skeleton} for \(G\) is a sequence of compact subgroups
\[K_{1}\leq K_{2}\leq\dotsm<G,\]
such that \(\mathcal{K}:=\bigcup_{n\geq1}K_{n}\) is dense in \(G\). For such a skeleton we have the sequence of measure metric groups \(\left(K_{n},d\mid_{K_{n}},m_{n}\right)_{n\geq 1}\), where \(m_{n}\) always stands for the Haar probability measure of \(K_{n}\).

\begin{defn}
A Polish group \(G\) is a {\bf L\'{e}vy group} if it admits a compatible right-invariant metric \(d\) and a skeleton \(\mathcal{K}=\bigcup_{n\geq1}K_{n}\) such that \(\left(K_{n},d\mid_{K_{n}},m_{n}\right)_{n\geq1}\) is a L\'{e}vy family.
\end{defn}

\begin{defn}\label{def:WassersteinGroup}
A Polish group \(G\) is a {\bf Wasserstein group} if it admits a compatible right-invariant metric \(d\) and a skeleton \(\mathcal{K}=\bigcup_{n\geq1}K_{n}\) such that \(\left(K_{n},d\mid_{K_{n}},m_{n}\right)_{n\geq1}\) is a Wasserstein family.
\end{defn}

By \cref{cor:WassisLevy}, every Wasserstein group is a L\'{e}vy group. For the converse, while there are L\'{e}vy families which are not Wasserstein families, as suggested in \cref{prop:cubes}, we do not know whether there is a L\'{e}vy group which is not a Wasserstein group. In \cref{app:Wass} we list many examples of well-known L\'{e}vy groups and prove that they are in fact Wasserstein groups. Now with the notion of Wasserstein group defined, we easily get:

\begin{proof}[Proof of \cref{mthm2}]
The proof follows from \cref{cor:qutifilt}, specialized to Wasserstein families which are formed by increasing sequence of compact subgroups of an ambient Polish group, and their metrics are all restrictions of one right-invariant compatible metric on the ambient group.
\end{proof}

We now pass to groups which are topologically generated by images of Wasserstein groups.

\begin{defn}\label{def:appWass}
A Polish group \(G\) is called {\bf locally Wasserstein} if there are Wasserstein groups \(H_{1},H_{2},\dotsc\) and continuous homomorphisms \(\iota_{n}:H_{n}\to G\), such that \(\bigcup\nolimits_{n\geq1}\iota_{n}\left(H_{n}\right)\) generates a dense subgroup of \(G\). Equivalently, for every nonempty open set \(O\subset G\), there are Wasserstein groups \(H_{1},\dotsc,H_{k}\) and continuous homomorphisms \(\iota_{j}:H_{j}\to G\), such that \(\iota_{1}\left(H_{1}\right)\dotsm\iota_{k}\left(H_{k}\right)\cap O\neq\emptyset\).
\end{defn}

Note that the second definition does not require an a priori countable collection of Wasserstein groups that does the work, but by second countability one can always move to a countable sub-collection that does the work.

\smallskip

Every Wasserstein group is locally Wasserstein, by taking \(H_{1}=G\) and \(\iota_{1}=\operatorname{id}_{G}\). In \cref{app:locWass} we give a few examples of L\'{e}vy groups which are locally Wasserstein groups.

\section{Nonsingular actions: generalities and the Borel lifting problem}\label{sct:nonsingspat}

We collect the standard facts and necessary background for the Borel lifting problem in nonsingular actions needed for the proof of \cref{mthm1}. For more background on near actions and spatial models see \cite[Introduction]{glasner2005automorphism} and \cite[\S7.1]{Pestov2006}.

\subsection{Borel \(G\)-spaces}

Let \(G\) be a Polish group. A {\bf Borel \(G\)-space}, denoted \(G\curvearrowright X\), is a standard Borel space \(X\) equipped with a (jointly) Borel action of \(G\), that is a measurable map
\[G\times X\to X,\quad\left(g,x\right)\mapsto g.x,\]
satisfying the axioms of group actions. When \(X\) is a compact metric space and the action is (jointly) continuous, we call it a {\bf compact \(G\)-space}. The following celebrated theorem of Becker--Kechris \cite[Thm.~2.6.6]{becker1996descriptive} generalizes a classical theorem of Mackey--Varadarajan from locally compact Polish groups to general Polish groups. It plays an important role in Glasner--Tsirelson--Weiss' proof, as well as in ours.

\begin{thm}[Becker--Kechris]\label{thm:BecKec}
Let \(G\) be a Polish group. Then every Borel \(G\)-space admits a \(G\)-equivariant Borel embedding into a compact \(G\)-space.
\end{thm}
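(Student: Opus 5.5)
The statement to be proved is the Becker--Kechris theorem, which the paper quotes from \cite[Thm.~2.6.6]{becker1996descriptive}. I would reconstruct the standard argument: first make the action continuous in a finer Polish topology, then embed into a compact space of functions on \(G\).

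The first step is to reduce to a Polish \(G\)-space. Given a Borel \(G\)-space \(X\), I would find a Polish topology \(\tau\) on \(X\) that generates the given Borel structure and makes the action \(G\times X\to X\) continuous. This is the Becker--Kechris topology theorem. Its proof starts from a countable algebra of Borel sets and closes it under the \emph{Vaught transforms} \(A^{\ast U}=\{x:\{g\in U: g.x\in A\}\text{ is comeager in }U\}\), for \(U\) ranging over a countable basis of \(G\). One then checks that the resulting topology is Polish and that the action is continuous. This step is the main obstacle. Both facts rest on Baire-category computations using joint Borelness of the action and the Kuratowski--Ulam theorem. The point to verify is that \(g.(A^{\ast U})=A^{\ast Ug^{-1}}\), so that the generating family is permuted by \(G\); this is what gives continuity of the action.

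Second, with \(X\) now a Polish \(G\)-space, I would embed it into a compact \(G\)-space. Fix a countable family \(\{f_k\}\) of continuous functions \(X\to[0,1]\) that separates points and generates the topology. Let \(\mathcal{L}\) be the set of bounded left-uniformly continuous functions on \(G\); a countable, \(G\)-invariant, separable-norm-closed subalgebra will suffice. For each \(k\), form the orbit functions \(F_k(x)(g)=f_k(g^{-1}.x)\), or, when they are not uniformly continuous, a smoothed version obtained by convolving along \(G\) with a bump near the identity. Then map \(x\mapsto (F_k(x))_k\) into \(\prod_k [0,1]^{D}\) with \(D\) countable dense in \(G\).

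More cleanly, I would take the Gelfand spectrum of the separable \(G\)-invariant C\(^*\)-algebra generated by the smoothed functions. This spectrum is a compact metrizable \(G\)-space \(Y\), and since the functions are uniformly continuous along orbits, the \(G\)-action on it is jointly continuous. The evaluation map \(X\to Y\) is continuous and equivariant, and it is injective because the \(f_k\) separate points and the smoothing can be taken arbitrarily close to the identity. An injective Borel map between standard Borel spaces is a Borel embedding by Lusin--Souslin, which finishes the argument. The remaining care in this step is ensuring that the smoothed functions still separate points and that the algebra stays separable, which is where Polishness of \(G\) enters.
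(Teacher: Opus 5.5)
The paper does not prove this statement; it quotes it from Becker--Kechris \cite[Thm.~2.6.6]{becker1996descriptive} and uses it as a black box. Your proposal therefore has to stand on its own.

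Your first step is a legitimate ingredient to cite: invoking the Becker--Kechris topological realization theorem makes the action continuous in a Polish topology. Your sketch of it is loose, though. The identity \(g.(A^{\ast U})=A^{\ast Ug^{-1}}\) is correct, but \(Ug^{-1}\) is not a basic set, so continuity needs a further Baire-category localization, and proving that the new topology is Polish is the hard part.

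The genuine gap is the smoothing in your second step. Convolving with a probability measure \(\mu\) on \(G\), \(F(x)=\int_{G}f(g^{-1}.x)\,d\mu(g)\), gives in general only \(\sup_{x}|F(h^{-1}.x)-F(x)|\leq2\|f\|_{\infty}\|h_{\ast}\mu-\mu\|_{\mathrm{TV}}\). So you need \(\|h_{\ast}\mu-\mu\|_{\mathrm{TV}}\to0\) as \(h\to e_{G}\), and on a non-locally compact Polish group no such \(\mu\) exists:
\(\mu\) is carried by a \(\sigma\)-compact set \(\bigcup_{n}C_{n}\);
each \(C_{m}C_{n}^{-1}\) is compact, hence nowhere dense;
so \(h_{\ast}\mu\perp\mu\) for every \(h\) outside the meager set \(\bigcup_{m,n}C_{m}C_{n}^{-1}\), in particular for \(h\) arbitrarily close to \(e_{G}\).
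The smoothed functions therefore need not have norm-continuous orbits. Without that, you lose both the separability of the \(G\)-invariant algebra they generate and the joint continuity of the action on its spectrum. This is exactly the relevant case, since nontrivial L\'{e}vy groups are never locally compact. So the difficulty is not where you located it (separation and Polishness of \(G\)); it is the absence of any invariant-like smoothing measure.

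The missing idea is that you should not aim for continuity on \(X\), since only a Borel embedding is required. Replace convolution by an infimal convolution against a compatible right-invariant metric \(d\) on \(G\): for \(\psi\in C(X,[0,1])\) and \(L\in\mathbb{N}\), set \(F_{\psi,L}(x):=\inf_{g\in G}\bigl(\psi(g.x)+L\,d(g,e_{G})\bigr)\). Then:
the infimum can be taken over a countable dense subset of \(G\), so \(F_{\psi,L}\) is Borel (upper semicontinuous);
right-invariance gives \(\sup_{x}|F_{\psi,L}(h.x)-F_{\psi,L}(x)|\leq L\,d(h,e_{G})\), so the orbit \(h\mapsto F_{\psi,L}(h^{-1}.\,\cdot)\) is norm-continuous;
continuity of the action at \((e_{G},x)\) gives \(F_{\psi,L}(x)\uparrow\psi(x)\) as \(L\to\infty\), so these functions separate points when the \(\psi\)'s do.

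Your Gelfand-spectrum argument then goes through inside the bounded Borel functions on \(X\). The spectrum of the separable \(G\)-invariant algebra is a compact metrizable \(G\)-space, and \(x\mapsto\mathrm{ev}_{x}\) is Borel, equivariant and injective, though not continuous. Lusin--Souslin finishes. An equivalent route uses a countable basis \((V_{n})\) of \(X\) and a left-invariant metric \(d_{l}\) on \(G\). First map \(x\mapsto\bigl(\overline{\{g:g^{-1}.x\in V_{n}\}}\bigr)_{n}\) into the Effros Borel space of closed subsets of \(G\), taken to the power \(\mathbb{N}\). Then apply \(C\mapsto\min\{1,d_{l}(\cdot,C)\}\) coordinatewise, landing in the compact space of \(1\)-Lipschitz functions \(G\to[0,1]\) with the left-translation action.
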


For a Borel \(G\)-space \(X\), write
\[\operatorname{Fix}\left(G\curvearrowright X\right):=\left\{x\in X:g.x=x\,\,\forall g\in G\right\}.\]
When \(X\) is a compact \(G\)-space this set is closed. Then from \cref{thm:BecKec} it follows that \(\operatorname{Fix}\left(G\curvearrowright X\right)\) is a Borel set for every Borel \(G\)-space \(X\).

\subsection{Measure preserving and nonsingular actions}

A {\bf standard measure space} \(\left(X,\mu\right)\) is a standard Borel space equipped with a Borel \(\sigma\)-finite measure. When \(G\curvearrowright X\) is a Borel \(G\)-space and \(\mu\) is a Borel \(\sigma\)-finite measure on it, we will call the standard measure space \(\left(X,\mu\right)\) {\bf measure preserving} if \(g_{\ast}\mu=\mu\) for every \(g\in G\), and {\bf nonsingular} if \(g_{\ast}\mu\sim\mu\) for every \(g\in G\). Here \(\sim\) stands for mutual absolute continuity of measures. We may also say that \(\mu\) is {\bf invariant} or {\bf quasi-invariant} measure for \(G\curvearrowright X\). We will denote this system by \(G\curvearrowright\left(X,\mu\right)\) and call it a {\bf probability preserving Borel action} or {\bf nonsingular Borel action}.\footnote{In the terminology of \cite{glasner2005automorphism}, these are called \emph{spatial actions}.} Since every \(\sigma\)-finite measure is equivalent to a probability measure, and since nonsingularity depends only on the measure class, in the nonsingular setup we may always replace \(\mu\) by an equivalent probability measure.

\smallskip

For a standard probability space \(\left(X,\mu\right)\), let \(\mathrm{Aut}\left(X,\mu\right)\) be the group of probability preserving automorphism, namely equivalence classes modulo equality \(\mu\)-a.e. of measure preserving Borel transformations, and let \(\mathrm{Aut}^{\ast}\left(X,\mu\right)\) be the analogous group of nonsingular automorphisms. The groups \(\mathrm{Aut}\left(X,\mu\right)\) and \(\mathrm{Aut}^{\ast}\left(X,\mu\right)\) are Polish groups with their usual weak topologies; for the measure preserving case see \cite[\S4.5]{Pestov2006}, and for the nonsingular case see \cite[\S1]{danilenko1995topological} and \cite[\S4.1]{giordano2007some}, where the weak topology is described equivalently through the nonsingular Koopman representation. Note that the isomorphism type of \(\mathrm{Aut}^{\ast}\left(X,\mu\right)\) depends only on the measure class of \(\mu\), and so to describe the topology one may assume that \(\mu\) is a probability measure. In this case it was shown by Le Ma\^{\i}tre that the Polish topology on \(\mathrm{Aut}^{\ast}\left(X,\mu\right)\) is unique \cite[Thm.~1.4]{lema2022polish}.

\smallskip

A {\bf measure preserving near action} or {\bf nonsingular near action} is a continuous homomorphism
\[G\to\mathrm{Aut}\left(X,\mu\right),\quad\text{respectively}\quad G\to\mathrm{Aut}^{\ast}\left(X,\mu\right).\]
Every nonsingular Borel action \(G\curvearrowright\left(X,\mu\right)\) induces a nonsingular near action by sending \(g\in G\) to the equivalence class of the transformation \(x\mapsto g.x\) in \(\mathrm{Aut}^{\ast}\left(X,\mu\right)\). Indeed, the resulting homomorphism \(G\to\mathrm{Aut}^{\ast}\left(X,\mu\right)\) is measurable and hence continuous by Pettis' automatic continuity property \cite[\S9.C]{kechris2012descriptive}. A {\bf spatial model} of a nonsingular near action \(\tau:G\to\mathrm{Aut}^{\ast}\left(X,\mu\right)\), is a nonsingular Borel action \(G\curvearrowright\left(X,\mu\right)\) whose induced near action is \(\tau\). We can now put the Borel lifting problem in a precise way:

\begin{BLP}\phantomsection\label{blp:nonsingular}
Given a nonsingular near action, does it admit a spatial model?\\
That is, for a nonsingular near action \(\tau:G\to\mathrm{Aut}^{\ast}\left(X,\mu\right)\), does there exist a Borel action \(G\curvearrowright X\) such that \(\mu\) is quasi-invariant and, for each \(g\in G\), the transformation \(x\mapsto g.x\) represents \(\tau\left(g\right)\in\mathrm{Aut}^{\ast}\left(X,\mu\right)\)?
\end{BLP}

\subsection{Radon--Nikodym cocycles and nonsingular Koopman representations}

Let \(G\curvearrowright\left(X,\mu\right)\) be a nonsingular Borel action. With every \(g\in G\) is associated the Radon--Nikodym derivative
\[\frac{dg^{-1}\mu}{d\mu}:X\to\mathbb{R}_{>0},\]
defined \(\mu\)-a.e. by the identity
\[\int_{X}f\left(g.x\right)\frac{dg^{-1}\mu}{d\mu}\left(x\right)d\mu\left(x\right)=\int_{X}f\left(x\right)d\mu\left(x\right),\quad f\in L^{1}\left(\mu\right).\]
The Radon--Nikodym derivatives form an almost cocycle:
\[\frac{d\left(gh\right)^{-1}\mu}{d\mu}\left(x\right)=\frac{dg^{-1}\mu}{d\mu}\left(h.x\right)\cdot\frac{dh^{-1}\mu}{d\mu}\left(x\right)\quad\text{for }\mu\text{-a.e. }x\in X,\]
where the exceptional null set may depend on \(g,h\). A pointwise-defined multiplicative {\bf cocycle} for a Borel \(G\)-space \(X\) is a Borel map
\[\nabla:G\times X\to\mathbb{R}_{>0},\quad\left(g,x\right)\mapsto\nabla_{g}\left(x\right),\]
such that
\[\nabla_{gh}\left(x\right)=\nabla_{g}\left(h.x\right)\nabla_{h}\left(x\right),\quad g,h\in G,\,x\in X.\]
A cocycle \(\nabla\) for a compact subgroup \(K<G\) is called a {\bf pointwise version} of the Radon--Nikodym almost cocycle of the nonsingular \(K\)-subaction if, for every \(g\in K\),
\[\nabla_{g}\left(x\right)=\frac{dg^{-1}\mu}{d\mu}\left(x\right)\quad\text{for }\mu\text{-a.e. }x\in X.\]
In general, a Radon--Nikodym almost cocycle need not admit a pointwise version; see \cite{becker2013cocycles}. However, for locally compact Polish groups, by the Mackey cocycle theorem (see \cite[Lem.~5.26, p.~179]{varadarajan1968geometry}) a pointwise version can always be found. In particular, whenever \(K<G\) is compact, we may choose a cocycle
\[\nabla^{K}:K\times X\to\mathbb{R}_{>0}\]
which is a pointwise version of the Radon--Nikodym almost cocycle of the nonsingular \(K\)-subaction.

\smallskip

The nonsingular Koopman representation of a nonsingular Borel action \(G\curvearrowright\left(X,\mu\right)\) is given by
\[\kappa:G\to U\left(L^{2}\left(\mu\right)\right),\quad\kappa_{g}f\left(x\right):=\Big(\tfrac{dg^{-1}\mu}{d\mu}\left(x\right)\Big)^{1/2}\cdot f\left(g.x\right).\]
The same formula applies to nonsingular near actions, modulo null sets. The following is a classical fact:

\begin{fct}\label{fct:koopmancont}
Let \(G\) be a Polish group and \(G\to\mathrm{Aut}^{\ast}\left(X,\mu\right)\) a nonsingular near action. Then the associated nonsingular Koopman representation \(\kappa\) is a strongly continuous representation. In particular, when \(\mu\) is a probability measure so that \(\mathbf{1}\in L^{2}\left(\mu\right)\),
\[\big\|\big(\tfrac{dg^{-1}\mu}{d\mu}\big)^{1/2}-1\big\|_{L^{2}\left(\mu\right)}\to0\,\,\text{ as }\,\,g\to e_{G}.\]
\end{fct}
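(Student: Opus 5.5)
The plan is to reduce strong continuity of \(\kappa\) to continuity at the identity on a dense set of vectors, and obtain that from continuity of the near action \(G\to\mathrm{Aut}^{\ast}\left(X,\mu\right)\) in the weak topology. Since \(\kappa\) is a homomorphism into the unitary group, it suffices to show \(\kappa_{g}f\to f\) as \(g\to e_{G}\) for \(f\) in a total subset of \(L^{2}\left(\mu\right)\). First, for \(g\) fixed, \(\kappa_{g}\) is well defined modulo null sets: it depends only on the class of \(\tau(g)\) in \(\mathrm{Aut}^{\ast}\left(X,\mu\right)\). It is unitary by the defining identity of the Radon--Nikodym derivative. The almost cocycle identity gives \(\kappa_{gh}=\kappa_{g}\kappa_{h}\), with the exceptional null sets irrelevant in \(L^{2}\). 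Hence \(\kappa\) factors as the composition of \(\tau\) with the Koopman map \(\mathrm{Aut}^{\ast}\left(X,\mu\right)\to U\left(L^{2}\left(\mu\right)\right)\).

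The key step is continuity of the Koopman map \(\mathrm{Aut}^{\ast}\left(X,\mu\right)\to U\left(L^{2}\left(\mu\right)\right)\) for the strong operator topology. Following \cite[\S1]{danilenko1995topological} and \cite[\S4.1]{giordano2007some}, the weak topology on \(\mathrm{Aut}^{\ast}\left(X,\mu\right)\) is described precisely as the topology making this map a homeomorphism onto its image, with strong operator topology on the target. Equivalently, one may take it as the topology in which \(T_{k}\to T\) iff \(\mu\left(T_{k}A\triangle TA\right)\to0\) for Borel \(A\) and \(dT_{k}\mu/d\mu\to dT\mu/d\mu\) in \(L^{1}\left(\mu\right)\). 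In the latter description I would argue as follows. For \(f=\mathbf{1}_{A}\), write \(\kappa_{T}\mathbf{1}_{A}=\left(\tfrac{dT^{-1}\mu}{d\mu}\right)^{1/2}\mathbf{1}_{T^{-1}A}\). Then use the elementary inequality \(\|\sqrt{a}-\sqrt{b}\|_{L^{2}}^{2}\leq\|a-b\|_{L^{1}}\) to convert \(L^{1}\)-convergence of the Radon--Nikodym derivatives into \(L^{2}\)-convergence of their square roots. The indicator factors are controlled by the measures of symmetric differences, which go to \(0\) by absolute continuity, since the densities converge in \(L^{1}\). Simple functions are dense, and unitarity gives equicontinuity, so the whole of \(L^{2}\) follows. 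Composing with the continuous homomorphism \(\tau\) yields strong continuity of \(\kappa\).

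Finally, assume \(\mu\) is a probability measure, so \(\mathbf{1}\in L^{2}\left(\mu\right)\). Then \(\kappa_{g}\mathbf{1}=\left(\tfrac{dg^{-1}\mu}{d\mu}\right)^{1/2}\), and strong continuity at \(e_{G}\) applied to the vector \(\mathbf{1}\) gives exactly the displayed convergence. The main obstacle is only the bookkeeping of which equivalent definition of the weak topology on \(\mathrm{Aut}^{\ast}\left(X,\mu\right)\) is adopted. If it is defined via the Koopman representation, as in \cite{giordano2007some}, the statement is essentially tautological. Otherwise one needs the indicator-function estimate above, and there the point requiring care is the convergence \(\mu\left(T_{k}^{-1}A\triangle T^{-1}A\right)\to0\) for inverses. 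This I would get from continuity of inversion in the Polish group \(\mathrm{Aut}^{\ast}\left(X,\mu\right)\).
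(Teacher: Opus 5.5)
Your proposal is correct and follows essentially the paper's second route: factor \(\kappa\) through the universal nonsingular Koopman map \(\mathrm{Aut}^{\ast}(X,\mu)\to U(L^{2}(\mu))\), whose strong continuity is built into the definition of the weak topology, and compose with the continuous near action. Your indicator-function check only adds detail for the alternative description of that topology, and the paper additionally mentions a route via measurability and Pettis' automatic continuity. One cosmetic remark, which applies equally to the paper: with the formula \(\kappa_{g}f(x)=(\tfrac{dg^{-1}\mu}{d\mu}(x))^{1/2}f(g.x)\), the cocycle identity actually gives \(\kappa_{gh}=\kappa_{h}\kappa_{g}\), so \(\kappa\) is an anti-homomorphism, but this does not affect any of the continuity arguments.
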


This fact can be seen in a few ways. One way is to verify the measurability of the nonsingular Koopman representation, as a homomorphism between Polish groups \(G\to U\left(L^{2}\left(\mu\right)\right)\), and then apply Pettis' automatic continuity property \cite[\S9.C]{kechris2012descriptive}. Another way is to use that the weak topology on \(\mathrm{Aut}^{\ast}\left(X,\mu\right)\) can be defined as the pullback of the strong operator topology of \(U\left(L^{2}\left(\mu\right)\right)\) along the 'universal' nonsingular Koopman map, namely \(\mathrm{Aut}^{\ast}\left(X,\mu\right)\to U\left(L^{2}\left(\mu\right)\right)\), \(T\mapsto\big(f\mapsto\big(\tfrac{dT^{-1}\mu}{d\mu}\big)^{1/2}\cdot f\circ T\big)\); see \cite[\S1]{danilenko1995topological}, \cite[\S4.1]{giordano2007some}, \cite[Ex.~(17.46)]{kechris2012descriptive}. The nonsingular Koopman representation of \(G\curvearrowright\left(X,\mu\right)\) is the composition of this map with the continuous map \(G\to\mathrm{Aut}^{\ast}\left(X,\mu\right)\), hence it is strongly continuous.

\section{Proof of the main theorem}\label{sct:proof-main-theorem}

\subsection{Technical lemmas}

We will isolate two lemmas for a later use. The first formulates the particular conclusion of Wasserstein stability we will need for the final proof.

\begin{lem}\label{lem:Wassfinal}
For \(n\geq 1\), let \(\left(M_{n},d_{n}\right)\) be a compact metric space, and suppose we are given a measurable family \(\{\alpha_{n,z}:z\in Z_{n}\}\) of probability measures on \(M_{n}\) defined on a probability space \(\left(Z_{n},\zeta_{n}\right)\) and a probability measure \(\beta_{n}\) on \(M_{n}\). Assume that the sequence of average Wasserstein distances satisfies
\[\int_{Z_{n}}\mathbf{W}_{d_{n}}\left(\alpha_{n,z},\beta_{n}\right)d\zeta_{n}\left(z\right)\to0\,\,\text{ as}\,\, n\to\infty.\]
Then for every measurable family \(\{f_{n,z}:M_{n}\to\mathbb{R}:z\in Z_{n},n\geq1\}\) of uniformly bounded uniformly equicontinuous functions, it holds that
\[\int_{Z_{n}}\Big|\int_{M_{n}}f_{n,z}d\alpha_{n,z}-\int_{M_{n}}f_{n,z}d\beta_{n}\Big|d\zeta_{n}\left(z\right)\to0\text{ as }n\to\infty.\]
\end{lem}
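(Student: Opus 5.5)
The plan is to approximate each uniformly equicontinuous function by a Lipschitz function, uniformly in \(n\) and \(z\), and then apply the definition of \(\mathbf{W}_{d_{n}}\) as a supremum over \(1\)-Lipschitz test functions.

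Let \(C:=\sup_{n,z}\|f_{n,z}\|_{\infty}<\infty\). Let \(\omega\) be a common modulus of continuity: \(|f_{n,z}(x)-f_{n,z}(y)|\leq\omega(d_{n}(x,y))\) with \(\omega(t)\to0\) as \(t\to0\). For a parameter \(L>0\), I would take the inf-convolution
\[f^{L}_{n,z}(x):=\inf_{y\in M_{n}}\big(f_{n,z}(y)+L\,d_{n}(x,y)\big).\]
This function is \(L\)-Lipschitz and satisfies \(f^{L}_{n,z}\leq f_{n,z}\). It is bounded below by \(-C\), so the infimum may be restricted to those \(y\) with \(L\,d_{n}(x,y)\leq 2C\). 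Hence
\[0\leq f_{n,z}-f^{L}_{n,z}\leq\sup_{t\leq 2C/L}\omega(t)=:\eta(L),\]
and \(\eta(L)\to0\) as \(L\to\infty\), independently of \(n\) and \(z\).

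The estimate is then the triangle inequality. For each \(z\),
\[\Big|\int f_{n,z}\,d\alpha_{n,z}-\int f_{n,z}\,d\beta_{n}\Big|\leq 2\eta(L)+\Big|\int f^{L}_{n,z}\,d\alpha_{n,z}-\int f^{L}_{n,z}\,d\beta_{n}\Big|\leq 2\eta(L)+L\,\mathbf{W}_{d_{n}}(\alpha_{n,z},\beta_{n}),\]
where the last step applies the definition of \(\mathbf{W}_{d_{n}}\) to the \(1\)-Lipschitz function \(f^{L}_{n,z}/L\). Integrating over \(Z_{n}\) gives the bound
\[2\eta(L)+L\int_{Z_{n}}\mathbf{W}_{d_{n}}(\alpha_{n,z},\beta_{n})\,d\zeta_{n}(z).\]
Taking \(\limsup_{n}\) with \(L\) fixed, the second term vanishes by hypothesis. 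Letting \(L\to\infty\) then removes the first term.

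The main obstacle is measurability in \(z\), both of the integrand on the left side and of \(z\mapsto\mathbf{W}_{d_{n}}(\alpha_{n,z},\beta_{n})\). The Wasserstein integral is implicitly assumed meaningful in the hypothesis. I would sidestep both issues by reading the integrals as upper integrals if necessary. Alternatively, one can note that the left integrand is measurable because the families are measurable and \(f_{n,z}\) is continuous, so joint measurability holds. The approximation \(f^{L}_{n,z}\) itself never needs to be measurable in \(z\), since it is only used pointwise in \(z\) to obtain the bound.
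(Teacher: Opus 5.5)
Your proposal is correct and follows essentially the same route as the paper: the inf-convolution \(\inf_{y}\big(f_{n,z}(y)+L\,d_{n}(x,y)\big)\) gives an \(L\)-Lipschitz approximant uniformly within the modulus of continuity at scale \(2C/L\), followed by the triangle inequality, the Lipschitz bound against \(\mathbf{W}_{d_{n}}\), integration over \(Z_{n}\), and \(\limsup_{n}\) before sending the error to zero. The only difference is bookkeeping: the paper fixes \(\epsilon\), picks \(\delta\) and sets \(L=2C/\delta\), whereas you fix \(L\) and track the error \(\eta(L)\).
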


\begin{proof}[Proof of \cref{lem:Wassfinal}]
Put \(C:=\sup_{n\geq 1}\sup_{z\in Z_{n}}\left\|f_{n,z}\right\|_{\infty}<+\infty\). Fix \(\epsilon>0\). Choose \(\delta>0\) such that \(d_{n}\left(x,y\right)<\delta\implies\left|f_{n,z}\left(x\right)-f_{n,z}\left(y\right)\right|<\epsilon\) for all \(n,z\). Put \(L:=2C/\delta\), and for each \(n,z\) define
\[F_{n,z}\left(x\right):=\inf_{y\in M_{n}}\left(f_{n,z}\left(y\right)+Ld_{n}\left(x,y\right)\right).\]
Then \(F_{n,z}\) is \(L\)-Lipschitz and \(F_{n,z}\leq f_{n,z}\), by taking \(y=x\) in the infimum. We claim that \(F_{n,z}\geq f_{n,z}-\epsilon\); indeed, fix \(x\in M_{n}\), and for \(y\in M_{n}\) consider two alternatives:
\[\begin{cases}
f_{n,z}\left(y\right)+Ld_{n}\left(x,y\right)\geq f_{n,z}\left(y\right)\geq f_{n,z}\left(x\right)-\epsilon
& d_{n}\left(x,y\right)<\delta\\
f_{n,z}\left(y\right)+Ld_{n}\left(x,y\right)\geq-C+L\delta=C\geq f_{n,z}\left(x\right)\geq f_{n,z}\left(x\right)-\epsilon
& d_{n}\left(x,y\right)\geq\delta.
\end{cases}\]
Taking infimum over \(y\in M_{n}\), we get the inequality. We thus found that \(\left\|F_{n,z}-f_{n,z}\right\|_{\infty}\leq\epsilon\), and therefore
\begin{align*}
\Big|\int_{M_{n}}f_{n,z}d\alpha_{n,z}-\int_{M_{n}}f_{n,z}d\beta_{n}\Big|
&\leq2\epsilon+\Big|\int_{M_{n}}F_{n,z}d\alpha_{n,z}-\int_{M_{n}}F_{n,z}d\beta_{n}\Big|\\
&=2\epsilon+L\cdot\Big|\int_{M_{n}}\frac{1}{L}F_{n,z}d\alpha_{n,z}-\int_{M_{n}}\frac{1}{L}F_{n,z}d\beta_{n}\Big|\leq2\epsilon+L\mathbf{W}_{d_{n}}\left(\alpha_{n,z},\beta_{n}\right).
\end{align*}
Integrating over \(Z_{n}\), taking limsup along \(n\to\infty\), and then letting \(\epsilon\to0\), complete the proof.
\end{proof}

Our general strategy is to show that every nonsingular Borel action of a Wasserstein group is measure preserving. At this point one could finish the proof by appealing to Glasner--Tsirelson--Weiss' theorem \cite[Thm.~1.1]{glasner2005automorphism}. However, this would be an overkill, since one can apply instead the following elementary fact.

\begin{lem}\label{lem:strange}
Let \(G\curvearrowright\left(X,\mu\right)\) be a nonsingular Borel action, with the property that every probability measure on \(X\) mutually absolutely continuous with \(\mu\) is invariant.\footnote{Of course, this forces \(\mu\) itself to be invariant.} Then \(\mu\left(\operatorname{Fix}\left(G\curvearrowright X\right)\right)=1\).
\end{lem}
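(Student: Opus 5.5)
The plan is to argue by contradiction. Assume \(\mu(X\setminus\operatorname{Fix}(G\curvearrowright X))>0\), and build a probability measure \(\nu\sim\mu\) together with some \(g\in G\) such that \(g_{\ast}\nu\neq\nu\). This would contradict the hypothesis. We may take \(\mu\) to be a probability measure. By \cref{thm:BecKec}, \(\operatorname{Fix}\) is Borel, so \(Y:=X\setminus\operatorname{Fix}\) is a Borel, \(G\)-invariant set of positive measure.

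The first step is to localize the nontriviality to a single group element and a single Borel set. Fix a countable dense set \(\{g_{k}\}\subseteq G\) and a countable algebra \(\{A_{j}\}\) of Borel sets that separates points of \(X\); such an algebra exists since \(X\) is standard. Take \(x\in Y\). Then \(g.x\neq x\) for some \(g\in G\). We need to pass to a countable family, and we could do this through a compact \(G\)-space model. Alternatively, we could simply take \(Y_{g}:=\{x:g.x\neq x\}\): since \(Y=\bigcup_{g}Y_{g}\), and a countable cover would follow from continuity in the compact model, some \(Y_{g_{k}}\) has positive measure. The simplest route avoids this issue entirely. By hypothesis \(\mu\) itself is invariant, so it suffices to find a single \(g\) with \(\mu(Y_{g})>0\). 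Such a \(g\) exists, since otherwise, using the compact model and density of \(\{g_{k}\}\), \(\mu\)-a.e.\ point would be fixed by every \(g_{k}\) and hence by all of \(G\), because the stabilizer of a point in a continuous action is closed. Now \(Y_{g}=\bigcup_{j}\big(A_{j}\cap g^{-1}(X\setminus A_{j})\big)\), so there is a Borel set \(A\) with \(\mu(A\setminus g^{-1}A)>0\).

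The second step is to tilt the measure. Set \(\nu:=c\,(\mathbf{1}_{X\setminus A}+2\cdot\mathbf{1}_{A})\mu\), normalized to be a probability measure. Then \(\nu\sim\mu\), so by hypothesis both \(\mu\) and \(\nu\) are \(g\)-invariant. Since \(\mu\) is invariant, \(g_{\ast}\nu\) has density \(c\,(1+\mathbf{1}_{A}\circ g^{-1})=c\,(1+\mathbf{1}_{gA})\) with respect to \(\mu\). Invariance of \(\nu\) therefore forces \(\mathbf{1}_{gA}=\mathbf{1}_{A}\) \(\mu\)-a.e., that is, \(\mu(A\triangle gA)=0\). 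Applying the same reasoning with \(g^{-1}\) gives \(\mu(A\triangle g^{-1}A)=0\), which contradicts \(\mu(A\setminus g^{-1}A)>0\).

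The main obstacle is the first step: passing from ``\(\mu(Y)>0\)'' to a single \(g\) with \(\mu(\{x:g.x\neq x\})>0\). The plan handles this by Becker--Kechris (\cref{thm:BecKec}). In a compact \(G\)-space, \(\{x:g_{k}.x=x\ \forall k\}=\operatorname{Fix}\) by continuity, so \(Y=\bigcup_{k}Y_{g_{k}}\) and countable subadditivity yields the required \(g=g_{k}\).
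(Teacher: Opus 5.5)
Your proof is correct and follows essentially the same route as the paper: you pass to a compact model via Becker--Kechris, use a countable dense subset of \(G\) and closedness of stabilizers to reduce to a single \(g\), and then tilt \(\mu\) by a bounded density built from a set \(A\) moved by \(g\), so that invariance of the tilted measure yields a contradiction. The differences are cosmetic: your density is \(c(1+\mathbf{1}_{A})\) rather than the paper's \(1+\tfrac12(\mathbf{1}_{A}-\mathbf{1}_{g.A})\), and you spell out, via a countable separating algebra, how to extract \(A\) from \(\mu(\{x:g.x\neq x\})>0\), a step the paper leaves implicit.
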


\begin{proof}[Proof of \cref{lem:strange}]
By \cref{thm:BecKec} we may assume that \(X\) is a compact \(G\)-space, and in fact we will only need that the action is continuous. Let \(\mu\) be an arbitrary quasi-invariant probability measure for \(G\curvearrowright X\), and by the assumption \(\mu\) is invariant. Fix an arbitrary \(g\in G\), and assume towards a contradiction that it does not act trivially modulo \(\mu\). Then there is a Borel set \(A\subseteq X\) with \(\mu\left(A\triangle g.A\right)>0\). Define
\[\phi:=1+\tfrac{1}{2}\left(\mathbf{1}_{A}-\mathbf{1}_{g.A}\right)\in L^{1}\left(\mu\right),\]
and let \(\nu\) be the probability measure \(d\nu=\phi d\mu\). Since \(\nu\sim\mu\), by the assumption \(\nu\) is invariant, and in particular \(\nu\left(A\right)=\nu\left(g.A\right)\). On the other hand, by definition of \(\nu\) we have
\begin{align*}
\nu\left(A\right)-\nu\left(g.A\right)
&=\int_{X}\left(\mathbf{1}_{A}-\mathbf{1}_{g.A}\right)\phi d\mu=\mu\left(A\right)-\mu\left(g.A\right)+\frac{1}{2}\int_{X}\left(\mathbf{1}_{A}-\mathbf{1}_{g.A}\right)^{2}d\mu\\
&=\frac{1}{2}\int_{X}\left(\mathbf{1}_{A}-\mathbf{1}_{g.A}\right)^{2}d\mu=\frac{1}{2}\mu\left(A\triangle g.A\right)>0,
\end{align*}
where the third equality uses the invariance of \(\mu\). This is a contradiction, so we conclude that there exists a Borel set \(X_{g}\subseteq X\) with \(\mu\left(X_{g}\right)=1\) such that \(g.x=x\) for every \(x\in X_{g}\). Fix a countable dense set \(D\subset G\). Since the action is continuous we have \(\bigcap_{g\in D}X_{g}\subseteq\operatorname{Fix}\left(G\curvearrowright X\right)\), and we conclude that
\[\mu\big(\operatorname{Fix}\left(G\curvearrowright X\right)\big)\geq\mu\big(\bigcap\nolimits_{g\in D}X_{g}\big)=1.\qedhere\]
\end{proof}

\subsection{The case of Wasserstein groups}

We will now prove \cref{mthm1} for Wasserstein groups. Let \(G\curvearrowright\left(X,\mu\right)\) be a nonsingular Borel action of a Wasserstein group. Replacing \(\mu\) by an equivalent probability measure and using \cref{thm:BecKec}, we may assume that \(X\) is a compact \(G\)-space and that \(\mu\) is a \(G\)-quasi-invariant Borel probability measure.

\smallskip

Fix a Wasserstein skeleton
\[\mathcal{K}=K_{1}\cup K_{2}\cup\dotsm<G\]
with respect to a compatible right-invariant metric \(d\) on \(G\). Write \(d_{n}:=d\mid_{K_{n}}\), let \(m_{n}\) be Haar probability measure on \(K_{n}\), and let \(r_{n}\to0\) be a sequence witnessing that \(\left(K_{n},d_{n},m_{n}\right)_{n\geq 1}\) is a Wasserstein family.

\smallskip

For each \(n\geq 1\) define the following objects:
\begin{itemize}
    \item Let \(\mathcal{I}_{n}\) denote the \(K_{n}\)-invariant sub-\(\sigma\)-algebra of \(X\) modulo \(\mu\), and denote
    \[\mathsf{Q}_{n}\left(\cdot\right):=\mathbb{E}_{\mu}\left[\cdot\mid\mathcal{I}_{n}\right].\]
    \item Using the Mackey cocycle theorem, choose a pointwise version of the Radon--Nikodym cocycle of the nonsingular \(K_{n}\)-subaction,
    \[\nabla^{n}:K_{n}\times X\to\mathbb{R}_{>0},\quad\left(g,x\right)\mapsto\nabla_{g}^{n}\left(x\right).\]
    \item For a nonnegative Borel function \(f:X\to\mathbb{R}\), define
    \[\mathsf{D}_{n}f\left(x\right):=\int_{K_{n}}\nabla_{g}^{n}\left(x\right)f\left(g.x\right)dm_{n}\left(g\right),\text{ and abbreviate }\mathsf{D}_{n}:=\mathsf{D}_{n}1.\]
    As \(\mathsf{D}_{n}>0\) \(\mu\)-a.e. and \(\int_{X}\mathsf{D}_{n}d\mu=1\) by Fubini's theorem, define a probability measure \(\nu_{n}\) by
    \[d\nu_{n}:=\mathsf{D}_{n}d\mu.\]
\end{itemize}

The following claim is a standard computation of conditional expectations; see e.g. \cite[Prop.~5.1]{avraham2024uncountable}.

\begin{claim}\label{clm:compavg}
The measure \(\nu_{n}\) is \(K_{n}\)-invariant, and for every nonnegative Borel function \(f:X\to\mathbb{R}\),
\[\mathsf{Q}_{n}f=\frac{\mathsf{D}_{n}f}{\mathsf{D}_{n}}\quad\mu\text{-a.e.}\]
Moreover, for every nonnegative \(K_{n}\)-invariant function \(F:X\to\mathbb{R}\),
\begin{equation}\label{eq:FDiden}
\int_{X}F\cdot\mathsf{D}_{n}fd\mu=\int_{X}F\cdot fd\mu.
\end{equation}
\end{claim}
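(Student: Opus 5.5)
The plan is to derive everything from the pointwise cocycle identity \(\nabla^{n}_{gh}(x)=\nabla^{n}_{g}(h.x)\nabla^{n}_{h}(x)\), the left/right invariance of Haar measure \(m_{n}\) on the compact group \(K_{n}\) (unimodular), and Fubini. I first prove the identity \eqref{eq:FDiden}, then deduce invariance of \(\nu_{n}\) and the formula for \(\mathsf{Q}_{n}\).

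For \eqref{eq:FDiden}, fix a nonnegative \(K_{n}\)-invariant \(F\) and a nonnegative Borel \(f\). By Fubini,
\[\int_{X}F\cdot\mathsf{D}_{n}f\,d\mu=\int_{K_{n}}\int_{X}F(x)\nabla^{n}_{g}(x)f(g.x)\,d\mu(x)\,dm_{n}(g).\]
For each fixed \(g\), \(F(x)=F(g.x)\), and \(\nabla^{n}_{g}=\frac{dg^{-1}\mu}{d\mu}\) \(\mu\)-a.e., so the inner integral equals \(\int_{X}(Ff)(g.x)\frac{dg^{-1}\mu}{d\mu}(x)\,d\mu(x)=\int_{X}Ff\,d\mu\) by the defining identity of the Radon--Nikodym derivative (extended to nonnegative functions by monotone convergence). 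Integrating over \(g\) with \(m_{n}(K_{n})=1\) gives \eqref{eq:FDiden}. The only subtlety is that the null set where \(\nabla^{n}_{g}\) agrees with the derivative depends on \(g\), which is harmless since we integrate over \(g\) only after integrating in \(x\).

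Next, the key pointwise identity: for \(h\in K_{n}\) and every \(x\),
\[\mathsf{D}_{n}f(h.x)=\int_{K_{n}}\nabla^{n}_{g}(h.x)f(gh.x)\,dm_{n}(g)=\nabla^{n}_{h}(x)^{-1}\int_{K_{n}}\nabla^{n}_{gh}(x)f(gh.x)\,dm_{n}(g)=\nabla^{n}_{h}(x)^{-1}\mathsf{D}_{n}f(x),\]
using the cocycle identity and right invariance of \(m_{n}\). Hence \(\mathsf{D}_{n}f/\mathsf{D}_{n}\) is everywhere \(K_{n}\)-invariant where \(\mathsf{D}_{n}\in(0,\infty)\) (and this set is itself invariant). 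For invariance of \(\nu_{n}\): for \(h\in K_{n}\) and a Borel set \(A\),
\[\nu_{n}(h^{-1}A)=\int\mathbf{1}_{A}(h.x)\mathsf{D}_{n}(x)\,d\mu=\int\mathbf{1}_{A}(h.x)\mathsf{D}_{n}(h.x)\nabla^{n}_{h}(x)\,d\mu=\int\mathbf{1}_{A}\mathsf{D}_{n}\,d\mu=\nu_{n}(A),\]
again by the Radon--Nikodym identity.

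Finally, for the conditional expectation, set \(R:=\mathsf{D}_{n}f/\mathsf{D}_{n}\), which is \(K_{n}\)-invariant, hence \(\mathcal{I}_{n}\)-measurable modulo \(\mu\). For a \(K_{n}\)-invariant set \(B\), apply \eqref{eq:FDiden} twice: with \(F=\mathbf{1}_{B}R\) and \(f\equiv1\) we get \(\int_{B}R\,d\mu=\int_{B}R\,\mathsf{D}_{n}\,d\mu=\int_{B}\mathsf{D}_{n}f\,d\mu\); with \(F=\mathbf{1}_{B}\) we get \(\int_{B}\mathsf{D}_{n}f\,d\mu=\int_{B}f\,d\mu\). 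So \(\int_{B}R\,d\mu=\int_{B}f\,d\mu\) for all invariant \(B\), which is the conditional expectation property; for unbounded \(f\) this holds as an identity of nonnegative extended functions, or one truncates and applies monotone convergence. The main technical point is that \(\mathcal{I}_{n}\) is defined modulo \(\mu\): sets invariant only up to null sets must be replaced by genuinely invariant ones. I handle this by the standard averaging \(B'=\{x:m_{n}(\{g:g.x\in B\})=1\}\), which is strictly invariant and differs from \(B\) by a null set, using Fubini and the quasi-invariance of \(\mu\).
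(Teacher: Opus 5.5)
Your proposal is correct and follows the paper's proof essentially step for step. It proves \eqref{eq:FDiden} first via Fubini, invariance of \(F\) and the Radon--Nikodym identity. It then derives the pointwise identity \(\mathsf{D}_{n}f(h.x)=\nabla^{n}_{h}(x)^{-1}\mathsf{D}_{n}f(x)\) from the cocycle identity and right-invariance of \(m_{n}\), which yields the \(K_{n}\)-invariance of both \(\nu_{n}\) and \(\mathsf{D}_{n}f/\mathsf{D}_{n}\), and it concludes with two applications of \eqref{eq:FDiden}. Your additional step, replacing sets that are invariant only modulo \(\mu\) by the strictly invariant set \(B'=\{x:m_{n}(\{g:g.x\in B\})=1\}\), fills in a point the paper passes over silently.
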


\begin{proof}[Proof of \cref{clm:compavg}]
Let \(F:X\to\mathbb{R}\) be a nonnegative \(K_{n}\)-invariant function. Then
\begin{align*}
\int_{X}F\cdot\mathsf{D}_{n}fd\mu	
&=\int_{K_{n}}\int_{X}F\left(x\right)\cdot\nabla_{g}^{n}\left(x\right)f\left(g.x\right)d\mu\left(x\right)dm_{n}\left(g\right)\\
&=\int_{K_{n}}\int_{X}F\left(g.x\right)\cdot\nabla_{g}^{n}\left(x\right)f\left(g.x\right)d\mu\left(x\right)dm_{n}\left(g\right)\\
&=\int_{K_{n}}\int_{X}F\left(x\right)\cdot f\left(x\right)d\mu\left(x\right)dm_{n}\left(g\right)=\int_{X}F\cdot fd\mu,
\end{align*}
where the first and last equalities are by Fubini's theorem, the second equality uses that \(F\) is \(K_{n}\)-invariant, and the third equality uses the Radon--Nikodym derivative property. This establishes \eqref{eq:FDiden}.

\smallskip

Next, let us record a basic identity. For every nonnegative Borel \(f:X\to\mathbb{R}\) and every \(k\in K_{n}\), using the cocycle identity \(\nabla_{g}^{n}\left(k.x\right)=\frac{\nabla_{gk}^{n}\left(x\right)}{\nabla_{k}^{n}\left(x\right)}\) and the right-invariance of \(m_{n}\), we have
\begin{equation}\label{eq:Diden}
\mathsf{D}_{n}f\left(k.x\right)=\int_{K_{n}}\nabla_{g}^{n}\left(k.x\right)f\left(gk.x\right)dm_{n}\left(g\right)=\frac{1}{\nabla_{k}^{n}\left(x\right)}\int_{K_{n}}\nabla_{gk}^{n}\left(x\right)f\left(gk.x\right)dm_{n}\left(g\right)=\frac{\mathsf{D}_{n}f\left(x\right)}{\nabla_{k}^{n}\left(x\right)}.
\end{equation}
We now show that \(\nu_{n}\) is \(K_{n}\)-invariant. For every nonnegative Borel \(F:X\to\mathbb{R}\), using \eqref{eq:Diden} we get
\begin{align*}
\int_{X}F\left(k.x\right)d\nu_{n}\left(x\right)
&=\int_{X}F\left(k.x\right)\mathsf{D}_{n}\left(x\right)d\mu\left(x\right)\\
&=\int_{X}F\left(k.x\right)\nabla_{k}^{n}\left(x\right)\mathsf{D}_{n}\left(k.x\right)d\mu\left(x\right)\\
&=\int_{X}F\left(y\right)\mathsf{D}_{n}\left(y\right)d\mu\left(y\right)=\int_{X}F\left(y\right)d\nu_{n}\left(y\right).
\end{align*}
This shows that \(\nu_{n}\) is \(K_{n}\)-invariant. It remains to identify the conditional expectation. Fix a nonnegative Borel function \(f:X\to\mathbb{R}\), and put \(H_{f}:=\mathsf{D}_{n}f/\mathsf{D}_{n}\). Then using \eqref{eq:Diden} twice, for \(f\) and for \(1\), we get
\[\frac{\mathsf{D}_{n}f\left(k.x\right)}{\mathsf{D}_{n}\left(k.x\right)}=\frac{\mathsf{D}_{n}f\left(x\right)/\nabla_{k}^{n}\left(x\right)}{\mathsf{D}_{n}\left(x\right)/\nabla_{k}^{n}\left(x\right)}=\frac{\mathsf{D}_{n}f\left(x\right)}{\mathsf{D}_{n}\left(x\right)},\quad k\in K_{n}.\]
Therefore \(H_{f}\) is \(K_{n}\)-invariant and hence \(\mathcal{I}_{n}\)-measurable. Using \eqref{eq:FDiden} twice, for \(F\) and for \(F\cdot H_{f}\) (which is also \(\mathcal{I}_{n}\)-measurable), we get
\[\int_{X}F\cdot H_{f}d\mu=\int_{X}F\cdot H_{f}\cdot\mathsf{D}_{n}d\mu=\int_{X}F\cdot\mathsf{D}_{n}fd\mu=\int_{X}F\cdot fd\mu.\]
Since this holds for every nonnegative \(\mathcal{I}_{n}\)-measurable \(F\), and since \(H_{f}\) is \(\mathcal{I}_{n}\)-measurable, we deduce that \(H_{f}=\mathbb{E}_{\mu}\left[f\mid\mathcal{I}_{n}\right]\), which in previous notations means \(\mathsf{Q}_{n}f=\mathsf{D}_{n}f/\mathsf{D}_{n}\).
\end{proof}

Let us define some additional objects:
\begin{itemize}
    \item For a nonnegative Borel function \(f:X\to\mathbb{R}\), define the Haar averages
    \[\mathsf{R}_{n}f\left(x\right):=\int_{K_{n}}f\left(g.x\right)dm_{n}\left(g\right).\]
    (Note that when \(\mu\) is invariant, one has \(\mathsf{R}_{n}f=\mathsf{Q}_{n}f\)).
    \item Define a \(\mathcal{D}\left(K_{n},m_{n}\right)\)-valued measurable kernel \(\Phi_{n}:=\{w_{n,x}:x\in X\}\) on \(\left(X,\nu_{n}\right)\), by letting
    \[w_{n,x}\left(g\right):=\frac{\nabla_{g}^{n}\left(x\right)}{\mathsf{D}_{n}\left(x\right)}.\]
    Note that by \cref{clm:compavg} one has
    \[\mathsf{Q}_{n}f\left(x\right)=\int_{K_{n}}f\left(g.x\right)w_{n,x}\left(g\right)dm_{n}\left(g\right).\]
\end{itemize}

\begin{claim}\label{clm:locvar}
It holds that
\(\mathbf{v}_{m_{n}}\left(\Phi_{n},r_{n}\right)\to0\), and hence, since \(\left(K_{n},d_{n},m_{n}\right)_{n\geq 1}\) is a Wasserstein family,
\[\mathbf{w}_{d_{n}}\left(\Phi_{n}\right)\to0.\]
\end{claim}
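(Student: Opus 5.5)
The plan is to compute the local total variation \(\mathbf{v}_{m_{n}}\left(\Phi_{n},r_{n}\right)\) exactly using the cocycle identity. It should collapse to a quantity that depends only on the Radon--Nikodym derivative of a single group element \(h\). That quantity tends to \(0\) as \(h\to e_{G}\) by \cref{fct:koopmancont}, and the second assertion then follows from the definition of a Wasserstein family.

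\emph{Step 1: compute the translated kernel.} Fix \(n\) and \(h\in K_{n}\cap B_{d}\left(r_{n}\right)\). The cocycle identity \(\nabla^{n}_{h^{-1}g}\left(x\right)=\nabla^{n}_{h^{-1}}\left(g.x\right)\nabla^{n}_{g}\left(x\right)\) gives
\[h.w_{n,x}\left(g\right)=w_{n,x}\left(h^{-1}g\right)=\nabla^{n}_{h^{-1}}\left(g.x\right)\,w_{n,x}\left(g\right).\]
Therefore
\[\mathbf{V}_{m_{n}}\left(h.w_{n,x},w_{n,x}\right)=\frac{1}{2\mathsf{D}_{n}\left(x\right)}\int_{K_{n}}\left|\nabla^{n}_{h^{-1}}\left(g.x\right)-1\right|\nabla^{n}_{g}\left(x\right)dm_{n}\left(g\right).\]

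\emph{Step 2: integrate against \(\nu_{n}\).} Since \(d\nu_{n}=\mathsf{D}_{n}d\mu\), the factor \(\mathsf{D}_{n}\left(x\right)\) cancels. Apply Fubini, and for each fixed \(g\in K_{n}\) use the defining property of the Radon--Nikodym derivative, \(\int_{X}F\left(g.x\right)\nabla^{n}_{g}\left(x\right)d\mu\left(x\right)=\int_{X}F\,d\mu\), which holds because \(\nabla^{n}\) is a pointwise version. This yields
\[\int_{X}\mathbf{V}_{m_{n}}\left(h.w_{n,x},w_{n,x}\right)d\nu_{n}\left(x\right)=\frac{1}{2}\int_{X}\left|\nabla^{n}_{h^{-1}}-1\right|d\mu=\frac{1}{2}\Big\|\frac{dh\mu}{d\mu}-1\Big\|_{L^{1}\left(\mu\right)}.\]
The last equality again uses that \(\nabla^{n}_{h^{-1}}\) agrees \(\mu\)-a.e. with \(\frac{dh\mu}{d\mu}\). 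The result is independent of \(n\), except that \(h\) ranges over \(K_{n}\cap B_{d}\left(r_{n}\right)\).

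\emph{Step 3: pass from \(L^{1}\) to \(L^{2}\) and use continuity.} Write \(\rho=\frac{dh\mu}{d\mu}\). By Cauchy--Schwarz,
\[\left\|\rho-1\right\|_{L^{1}}\leq\left\|\sqrt{\rho}-1\right\|_{L^{2}}\left\|\sqrt{\rho}+1\right\|_{L^{2}}\leq2\left\|\sqrt{\rho}-1\right\|_{L^{2}},\]
where the second inequality uses \(\left\|\sqrt{\rho}\right\|_{L^{2}}=1\). Now fix \(\epsilon>0\). By \cref{fct:koopmancont}, applied to \(h^{-1}\to e_{G}\), there is \(\delta>0\) such that \(d\left(h,e_{G}\right)<\delta\) implies \(\left\|\sqrt{\rho}-1\right\|_{L^{2}}<\epsilon\); here we use that \(d\) is compatible and that \(h\to e_{G}\) if and only if \(h^{-1}\to e_{G}\). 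Since \(r_{n}\to0\), we have \(r_{n}<\delta\) for all large \(n\). For such \(n\), the supremum over \(h\in B_{d}\left(r_{n}\right)\) is at most \(\epsilon\), so \(\mathbf{v}_{m_{n}}\left(\Phi_{n},r_{n}\right)\to0\).

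\emph{Step 4: conclude.} The kernel \(\Phi_{n}\) takes values in \(\mathcal{D}\left(K_{n},m_{n}\right)\) over the probability space \(\left(X,\nu_{n}\right)\): each \(w_{n,x}\) is nonnegative with \(m_{n}\)-mean \(1\) by the definition of \(\mathsf{D}_{n}\). So the equivalent form of the definition of a Wasserstein family, with witness sequence \(r_{n}\), gives \(\mathbf{w}_{d_{n}}\left(\Phi_{n}\right)\to0\).

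The only delicate point is Step 2. There one must use the pointwise cocycle identity to rewrite \(h.w_{n,x}\), and the \(\mu\)-a.e. Radon--Nikodym property for each fixed \(g\) inside the Fubini integral. Both are legitimate because \(\nabla^{n}\) was chosen as a genuine pointwise cocycle that is jointly Borel on \(K_{n}\times X\).
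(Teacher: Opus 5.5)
Your proof is correct and follows the paper's argument. Both use the same cocycle identity and Radon--Nikodym change of variables to reduce the local variation to a quantity controlled by $\|\kappa_{h^{-1}}1-1\|_{L^{2}(\mu)}$, which tends to $0$ by \cref{fct:koopmancont}. The only difference is the order of steps: you first obtain the exact identity $\int_{X}\mathbf{V}_{m_{n}}(h.w_{n,x},w_{n,x})\,d\nu_{n}(x)=\frac{1}{2}\|\frac{dh\mu}{d\mu}-1\|_{L^{1}(\mu)}$ and then apply Cauchy--Schwarz on $X$, whereas the paper applies the Hellinger-type bound for each $x$ and Cauchy--Schwarz over $\nu_{n}$ before changing variables.
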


\begin{proof}[Proof of \cref{clm:locvar}]
Recall that Cauchy--Schwarz inequality gives
\[\mathbf{V}_{m}\left(\phi,\phi'\right):=\frac{1}{2}\mathbb{E}\left[\left|\phi-\phi'\right|\right]\leq\big\|\sqrt{\phi}-\sqrt{\phi'}\big\|_{L^{2}\left(m\right)}.\]
Then for every \(h\in K_{n}\),
\[\int_{X}\mathbf{V}_{m_{n}}\left(h.w_{n,x},w_{n,x}\right)d\nu_{n}\left(x\right)\leq\Big(\int_{X}\int_{K_{n}}\Big(\sqrt{w_{n,x}\left(h^{-1}g\right)}-\sqrt{w_{n,x}\left(g\right)}\Big)^{2}dm_{n}\left(g\right)d\nu_{n}\left(x\right)\Big)^{1/2}.\]
Let us compute the square of the right-hand side with the cocycle identity \(\nabla_{h^{-1}g}^{n}\left(x\right)=\nabla_{h^{-1}}^{n}\left(g.x\right)\nabla_{g}^{n}\left(x\right)\):
\begin{align*}
&\int_{X}\int_{K_{n}}\Big(\sqrt{w_{n,x}\left(h^{-1}g\right)}-\sqrt{w_{n,x}\left(g\right)}\Big)^{2}dm_{n}\left(g\right)d\nu_{n}\left(x\right)\\
&\quad=\int_{K_{n}}\int_{X}\Big(\sqrt{\nabla_{h^{-1}}^{n}\left(g.x\right)}-1\Big)^{2}\nabla_{g}^{n}\left(x\right)d\mu\left(x\right)dm_{n}\left(g\right)\\
&\quad=\int_{K_{n}}\int_{X}\Big(\sqrt{\nabla_{h^{-1}}^{n}\left(y\right)}-1\Big)^{2}d\mu\left(y\right)dm_{n}\left(g\right)\\
&\quad=\left\|\kappa_{h^{-1}}1-1\right\|_{L^{2}\left(\mu\right)}^{2},
\end{align*}
where \(\kappa\) is the nonsingular Koopman representation of \(G\curvearrowright\left(X,\mu\right)\). By \cref{fct:koopmancont}, since \(r_{n}\to0\) one has
\[\sup\nolimits_{h\in B_{d_{n}}\left(r_{n}\right)}\left\|\kappa_{h^{-1}}1-1\right\|_{L^{2}\left(\mu\right)}\to0.\]
Combining both estimates gives precisely \(\mathbf{v}_{m_{n}}\left(\Phi_{n},r_{n}\right)\to0\).
\end{proof}

\begin{claim}\label{clm:QRclos}
For every \(\varphi\in C\left(X\right)\),
\[\left\|\mathsf{Q}_{n}\varphi-\mathsf{R}_{n}\varphi\right\|_{L^{1}\left(\nu_{n}\right)}\to0.\]
\end{claim}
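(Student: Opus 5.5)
We write both averages as integrals over \(K_{n}\) against two laws on the group, and then apply \cref{lem:Wassfinal} with \(M_{n}=K_{n}\), \(Z_{n}=X\), \(\zeta_{n}=\nu_{n}\), \(\alpha_{n,x}=w_{n,x}m_{n}\) and \(\beta_{n}=m_{n}\). By \cref{clm:compavg} and the definition of \(\Phi_{n}\),
\[\mathsf{Q}_{n}\varphi(x)=\int_{K_{n}}\varphi(g.x)\,w_{n,x}(g)\,dm_{n}(g),\qquad \mathsf{R}_{n}\varphi(x)=\int_{K_{n}}\varphi(g.x)\,dm_{n}(g).\]
Setting \(f_{n,x}(g):=\varphi(g.x)\), this gives
\[\left\|\mathsf{Q}_{n}\varphi-\mathsf{R}_{n}\varphi\right\|_{L^{1}(\nu_{n})}=\int_{X}\Big|\int_{K_{n}}f_{n,x}\,d(w_{n,x}m_{n})-\int_{K_{n}}f_{n,x}\,dm_{n}\Big|\,d\nu_{n}(x).\]
The hypothesis of \cref{lem:Wassfinal}, that \(\int_{X}\mathbf{W}_{d_{n}}(w_{n,x}m_{n},m_{n})\,d\nu_{n}(x)=\mathbf{w}_{d_{n}}(\Phi_{n})\to0\), is exactly \cref{clm:locvar}.

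It remains to check that the family \(\{f_{n,x}\}\) satisfies the hypotheses of \cref{lem:Wassfinal}. Measurability in \((x,g)\) follows because the action is Borel (indeed continuous, since \(X\) is a compact \(G\)-space), and \(\sup|f_{n,x}|\leq\|\varphi\|_{\infty}\) gives uniform boundedness. The substantive point, and the main obstacle, is uniform equicontinuity with respect to \(d\): we need that for every \(\epsilon>0\) there is \(\delta>0\) such that \(d(g,g')<\delta\) implies \(|\varphi(g.x)-\varphi(g'.x)|<\epsilon\) for all \(x\in X\) and all \(g,g'\in G\), uniformly in \(n\).

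To prove this, write \(g=hg'\) with \(h=g(g')^{-1}\). By right-invariance, \(d(h,e_{G})=d(g,g')<\delta\), and \(\varphi(g.x)-\varphi(g'.x)=\varphi(h.y)-\varphi(y)\) with \(y=g'.x\). It therefore suffices to show that
\[\sup_{y\in X}|\varphi(h.y)-\varphi(y)|\to0\quad\text{as }h\to e_{G}.\]
This follows from joint continuity of \(G\times X\to X\) together with compactness of \(X\). Suppose instead that \(h_{k}\to e_{G}\) and \(y_{k}\in X\) satisfy \(|\varphi(h_{k}.y_{k})-\varphi(y_{k})|\geq\epsilon\). Passing to a subsequence, \(y_{k}\to y\). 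Then \(h_{k}.y_{k}\to y\) by joint continuity, and continuity of \(\varphi\) gives a contradiction. Thus the \(d\)-neighbourhoods of the identity (compatible with the topology of \(G\)) provide the required \(\delta\), uniformly in \(x\) and \(n\).

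With these hypotheses verified, \cref{lem:Wassfinal} yields \(\left\|\mathsf{Q}_{n}\varphi-\mathsf{R}_{n}\varphi\right\|_{L^{1}(\nu_{n})}\to0\). Two minor points remain. \cref{clm:compavg} was stated for nonnegative \(f\), so we first extend the formula for \(\mathsf{Q}_{n}\) to real \(\varphi\in C(X)\) by linearity, writing \(\varphi=\varphi^{+}-\varphi^{-}\) (or adding a constant). The kernel \(\{w_{n,x}\}\) is defined only \(\mu\)-a.e., equivalently \(\nu_{n}\)-a.e. since \(\mathsf{D}_{n}>0\); this is harmless because all the integrals are taken against \(\nu_{n}\).
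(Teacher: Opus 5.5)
Your proposal is correct and matches the paper's proof. You use the same test functions $f_{n,x}(g)=\varphi(g.x)$, the same right-invariance reduction of uniform equicontinuity to $\sup_{y}|\varphi(k.y)-\varphi(y)|\to 0$ as $k\to e_{G}$, and the same combination of \cref{lem:Wassfinal} with \cref{clm:locvar}. Your compactness argument for that uniform limit and your remarks on signed $\varphi$ and the $\nu_{n}$-a.e. definedness of $w_{n,x}$ only fill in details the paper leaves implicit.
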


\begin{proof}[Proof of \cref{clm:QRclos}]
For \(n\geq 1\) and \(x\in X\) define
\[f_{n,x}:K_{n}\to\mathbb{R},\quad f_{n,x}\left(g\right):=\varphi\left(g.x\right).\]
The family \(\left\{f_{n,x}:n\geq1,\ x\in X\right\}\) is uniformly bounded. It is also uniformly equicontinuous; indeed, by continuity of the action \(G\curvearrowright X\), the compactness of \(X\), and the continuity of \(\varphi\), for every \(\epsilon>0\) there is \(\delta>0\) such that
\[d\left(k,e\right)<\delta\quad\Longrightarrow\quad\sup_{y\in X}\left|\varphi\left(k.y\right)-\varphi\left(y\right)\right|<\epsilon.\]
If \(g_{1},g_{2}\in K_{n}\) and \(d_{n}\left(g_{1},g_{2}\right)<\delta\), then \(d\left(g_{1}g_{2}^{-1},e\right)<\delta\), and with \(k=g_{1}g_{2}^{-1}\), \(y=g_{2}.x\), we obtain
\[\left|f_{n,x}\left(g_{1}\right)-f_{n,x}\left(g_{2}\right)\right|=\left|\varphi\left(k.y\right)-\varphi\left(y\right)\right|<\epsilon.\]
It follows from \cref{clm:locvar} together with \cref{lem:Wassfinal} that
\[\left\|\mathsf{Q}_{n}\varphi-\mathsf{R}_{n}\varphi\right\|_{L^{1}\left(\nu_{n}\right)}=\int_{X}\Big|\int_{K_{n}}f_{n,x}\left(g\right)w_{n,x}\left(g\right)dm_{n}\left(g\right)-\int_{K_{n}}f_{n,x}\left(g\right)dm_{n}\left(g\right)\Big|d\nu_{n}\left(x\right)\to0.\qedhere\]
\end{proof}

\begin{claim}\label{clm:weakconv}
\(\nu_{n}\Rightarrow\mu\) weakly, and consequently \(\mu\) is \(G\)-invariant.
\end{claim}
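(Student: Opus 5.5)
The plan is to test $\nu_{n}$ against a fixed $\varphi\in C\left(X\right)$ and compare $\int_{X}\varphi\,d\nu_{n}$ with $\int_{X}\varphi\,d\mu$ through the two averaging operators $\mathsf{Q}_{n}$ and $\mathsf{R}_{n}$. Their difference is controlled by \cref{clm:QRclos}. Since $X$ is compact and both sides are linear in $\varphi$, adding a constant to $\varphi$ changes both integrals by the same amount. So I will assume $\varphi\geq0$, which lets me apply \cref{clm:compavg} directly.

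\textbf{Step 1: $\mathsf{Q}_{n}$ recovers $\mu$.} By \cref{clm:compavg}, $\mathsf{Q}_{n}\varphi=\mathsf{D}_{n}\varphi/\mathsf{D}_{n}$ $\mu$-a.e. Since $d\nu_{n}=\mathsf{D}_{n}\,d\mu$, this gives
\[\int_{X}\mathsf{Q}_{n}\varphi\,d\nu_{n}=\int_{X}\mathsf{D}_{n}\varphi\,d\mu=\int_{X}\varphi\,d\mu,\]
where the last equality is \eqref{eq:FDiden} with the invariant function $F\equiv1$.

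\textbf{Step 2: $\mathsf{R}_{n}$ recovers $\nu_{n}$.} \Cref{clm:compavg} also says $\nu_{n}$ is $K_{n}$-invariant. By Fubini,
\[\int_{X}\mathsf{R}_{n}\varphi\,d\nu_{n}=\int_{K_{n}}\int_{X}\varphi\left(g.x\right)d\nu_{n}\left(x\right)dm_{n}\left(g\right)=\int_{X}\varphi\,d\nu_{n}.\]

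\textbf{Step 3: weak convergence.} Subtracting the two identities,
\[\Big|\int_{X}\varphi\,d\nu_{n}-\int_{X}\varphi\,d\mu\Big|\leq\left\|\mathsf{R}_{n}\varphi-\mathsf{Q}_{n}\varphi\right\|_{L^{1}\left(\nu_{n}\right)},\]
and the right-hand side tends to $0$ by \cref{clm:QRclos}. Hence $\nu_{n}\Rightarrow\mu$.

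\textbf{Step 4: invariance under the skeleton.} Fix $g\in K_{m}$. For every $n\geq m$ we have $g\in K_{n}$, so $g_{\ast}\nu_{n}=\nu_{n}$. Since $x\mapsto g.x$ is continuous on the compact space $X$, pushforward by $g$ is weakly continuous, so $g_{\ast}\nu_{n}\Rightarrow g_{\ast}\mu$. Combined with $\nu_{n}\Rightarrow\mu$, uniqueness of weak limits gives $g_{\ast}\mu=\mu$. Thus $\mu$ is invariant under the dense subgroup $\mathcal{K}=\bigcup_{n}K_{n}$.

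\textbf{Step 5: extension to $G$.} For each $\varphi\in C\left(X\right)$, the map $g\mapsto\int_{X}\varphi\left(g.x\right)d\mu\left(x\right)$ is continuous on $G$. This follows from joint continuity of the action and bounded convergence. Therefore the stabilizer $\{g\in G:g_{\ast}\mu=\mu\}$ is a closed subgroup. It contains the dense set $\mathcal{K}$, so it equals $G$.

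None of these steps looks like a real obstacle. The analytic content is already packaged in \cref{clm:QRclos}, and what remains is to notice the pair of identities in Steps 1 and 2. The only points needing care are the reduction to nonnegative $\varphi$ and the fact that \cref{clm:compavg} holds $\mu$-a.e., hence $\nu_{n}$-a.e. because $\nu_{n}\ll\mu$.
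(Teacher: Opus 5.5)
Your proposal is correct and follows the paper's proof: it uses the same pair of identities $\int\mathsf{R}_{n}\varphi\,d\nu_{n}=\int\varphi\,d\nu_{n}$ and $\int\mathsf{Q}_{n}\varphi\,d\nu_{n}=\int\varphi\,d\mu$ together with \cref{clm:QRclos}, then passes $K_{n}$-invariance to the limit and uses that the stabilizer of $\mu$ is a closed subgroup containing the dense skeleton. The differences are cosmetic. You derive the second identity from $\mathsf{Q}_{n}\varphi=\mathsf{D}_{n}\varphi/\mathsf{D}_{n}$ and \eqref{eq:FDiden} with $F\equiv1$, whereas the paper takes $F=\mathsf{Q}_{n}\varphi$, $f\equiv1$ and then the tower property; and your explicit reduction to $\varphi\geq0$ spells out a step the paper leaves implicit.
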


Before the proof, recall that if \(G\curvearrowright X\) is a compact \(G\)-space, then the pushforward action \(G\curvearrowright P\left(X\right)\) on the space of probability measures on \(X\) is continuous with the weak topology; indeed, if \(g_{i}\to g\) in \(G\) and \(\mu_{i}\Rightarrow\mu\) weakly in \(P\left(X\right)\), then for every \(\varphi\in C\left(X\right)\),
\[\Big|\int_{X}\varphi dg_{i\ast}\mu_{i}-\int_{X}\varphi dg_{\ast}\mu\Big|\leq\sup_{x\in X}\left|\varphi\left(g_{i}.x\right)-\varphi\left(g.x\right)\right|+\Big|\int_{X}\varphi\left(g.x\right)d\mu_{i}\left(x\right)-\int_{X}\varphi\left(g.x\right)d\mu\left(x\right)\Big|.\]
The first term tends to \(0\) by continuity of the action \(G\curvearrowright X\) and the compactness of \(X\), and the second tends to \(0\) by the weak convergence \(\mu_{i}\Rightarrow\mu\).

\begin{proof}[Proof of \cref{clm:weakconv}]
Let \(\varphi\in C\left(X\right)\). Since \(\nu_{n}\) is \(K_{n}\)-invariant by \cref{clm:compavg},
\[\int_{X}\mathsf{R}_{n}\varphi d\nu_{n}=\int_{X}\varphi d\nu_{n}.\]
On the other hand, we have
\[\int_{X}\mathsf{Q}_{n}\varphi d\nu_{n}=\int_{X}\mathsf{Q}_{n}\varphi\cdot\mathsf{D}_{n}d\mu=\int_{X}\mathsf{Q}_{n}\varphi d\mu=\int_{X}\varphi d\mu,\]
where the second equality is by \eqref{eq:FDiden} with \(f\equiv 1\), and the third equality is by the definition of \(Q_{n}\). Hence, using \cref{clm:QRclos},
\[\Big|\int_{X}\varphi d\nu_{n}-\int_{X}\varphi d\mu\Big|=\Big|\int_{X}\mathsf{R}_{n}\varphi d\nu_{n}-\int_{X}\mathsf{Q}_{n}\varphi d\nu_{n}\Big|\leq\left\|\mathsf{R}_{n}\varphi-\mathsf{Q}_{n}\varphi\right\|_{L^{1}\left(\nu_{n}\right)}\to0.\]
This shows the desired weak convergence.

\smallskip

We now pass the invariance to the limit. Let \(h\in\mathcal{K}=K_{1}\cup K_{2}\cup\dotsm\), the Wasserstein skeleton of \(G\), and let \(\varphi\in C\left(X\right)\) be arbitrary. Let \(N\geq1\) be such that \(h\in K_{n}\) for all \(n\geq N\). For every \(n\geq N\), since \(\nu_{n}\) is \(K_{n}\)-invariant, we have \(h_{\ast}\nu_{n}\left(\varphi\right)=\nu_{n}\left(\varphi\right)\). Letting \(n\to\infty\), we obtain \(h_{\ast}\mu\left(\varphi\right)=\mu\left(\varphi\right)\). This shows that \(\mu\) is \(\mathcal{K}\)-invariant. Finally, since \(G\curvearrowright P\left(X\right)\) is continuous, as we recalled above, the stabilizer of \(\mu\) is a closed subgroup of \(G\) containing the dense subgroup \(\mathcal{K}\), hence this stabilizer is \(G\), so \(\mu\) is \(G\)-invariant.
\end{proof}

Altogether, we proved that for any nonsingular Borel action \(G\curvearrowright\left(X,\mu\right)\) where \(\mu\) is a probability measure, \(\mu\) is necessarily invariant. Applying this assertion to every probability measure equivalent to \(\mu\), since such a measure is again quasi-invariant for the same Borel action, we get that every probability measure equivalent to \(\mu\) is invariant. Consequently, by \cref{lem:strange} we obtain \(\mu\left(\operatorname{Fix}\left(G\curvearrowright X\right)\right)=1\).

\subsection{The general case of locally Wasserstein groups}

We will now complete the proof of \cref{mthm1} by extending it from Wasserstein groups to locally Wasserstein groups.

\smallskip

Let \(G\curvearrowright\left(X,\mu\right)\) be a nonsingular Borel action of a locally Wasserstein group. Replacing \(\mu\) by an equivalent probability measure and using \cref{thm:BecKec}, we may assume that \(X\) is a compact \(G\)-space, and that \(\mu\) is a quasi-invariant Borel probability measure.

\smallskip

Let \(\iota_{n}:H_{n}\to G\), \(\geq1\), witness that \(G\) is locally Wasserstein. For every \(n\geq 1\), the subaction
\[H_{n}\curvearrowright\left(X,\mu\right),\quad h.x:=\iota_{n}\left(h\right).x,\]
is nonsingular spatial. Since \(H_{n}\) is a Wasserstein group, by the previous proof \(\mu\left(\operatorname{Fix}\left(H_{n}\curvearrowright X\right)\right)=1\), and therefore
\[\mu\left(X_{o}\right)=1,\quad X_{o}:=\bigcap\nolimits_{n\geq1}\operatorname{Fix}\left(H_{n}\curvearrowright X\right).\]
For every \(x\in X_{o}\), its stabilizer \(G_{x}:=\left\{g\in G:g.x=x\right\}\) is closed and contains the dense subgroup generated by \(\bigcup_{n\geq1}\iota_{n}\left(H_{n}\right)\), hence \(G_{x}=G\). It then follows that
\(\mu\left(\operatorname{Fix}\left(G\curvearrowright X\right)\right)\geq\mu\left(X_{o}\right)=1\).

\appendix

\section{Examples of Wasserstein groups}\label[appendix]{app:Wass}

In this appendix we verify the Wasserstein property for some L\'{e}vy groups appearing in Glasner--Tsirelson--Weiss, Pestov's monograph, and Giordano--Pestov; see \cite[\S1]{glasner2005automorphism}, \cite[Ch.~4]{Pestov2006}, and \cite{giordano2007some}. To this end we will use \cref{mthm2}. That is, for a Polish group \(G\) with a compatible right-invariant metric \(d\), we find a skeleton \(K_{1}\leq K_{2}\leq\dotsm<G\) such that each \(K_{n}\) admits a martingale chain
\[\left\{e\right\}=H_{n,0}\leq H_{n,1}\leq\dotsm\leq H_{n,M_{n}}=K_{n}\]
having quotient radii
\[\rho_{n,i}:=\sup_{g\in H_{n,i}}\inf_{h\in H_{n,i-1}}d\left(e,gh\right),\quad1\leq i\leq M_{n},\]
and corresponding length and scale
\[\ell_{n}:=\sum\nolimits_{i=1}^{M_{n}}\rho_{n,i}\quad\text{and}\quad\varrho_{n}:=\max_{1\leq i\leq M_{n}}\rho_{n,i},\]
so that
\[\sup\nolimits_{n\geq1}\ell_{n}<+\infty\quad\text{and}\quad\varrho_{n}\to0.\]

In practice, for each \(n\) we will specify numbers \(a_{n,1},\dotsc,a_{n,M_{n}}\), such that every coset in \(H_{n,i}/H_{n,i-1}\) has a representative \(s\in H_{n,i}\) satisfying \(d\left(e,s\right)\leq a_{n,i}\). This means \(\rho_{n,i}\leq a_{n,i}\), so it is enough to verify
\[\sup\nolimits_{n\geq1}\sum\nolimits_{i=1}^{M_{n}}a_{n,i}<+\infty\quad\text{and}\quad\max_{1\leq i\leq M_{n}}a_{n,i}\to0.\]

\subsection{A convenient metric for weak topologies}

In most of the forthcoming examples, we make use of the following particularly convenient metric for the weak topology of nonsingular automorphisms. For an automorphism \(T\) of \(\left(X,\mu\right)\), denote \(\operatorname{supp}\left(T\right):=\{x\in X:Tx\neq x\}\), which is well-defined modulo \(\mu\).

\begin{lem}\label{lem:suppweak}
Let \(\left(X,\mu\right)\) be a standard nonatomic probability space. Then \(\mathrm{Aut}^{\ast}\left(X,\mu\right)\), with the weak topology, admits a compatible right-invariant metric \(d\) such that
\[d\left(e,T\right)\leq\mu\left(\operatorname{supp}\left(T\right)\right)\text{ for every }T\in\mathrm{Aut}^{\ast}\left(X,\mu\right).\]
Since \(\mathrm{Aut}\left(Y,\nu\right)\), the group of measure preserving transformations of a standard finite or infinite \(\sigma\)-finite measure space \(\left(Y,\nu\right)\), is a closed subgroup of \(\mathrm{Aut}^{\ast}\left(X,\mu\right)\), the same holds for \(\mathrm{Aut}\left(Y,\nu\right)\) as well.
\end{lem}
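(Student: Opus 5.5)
We would realize the weak topology of \(\mathrm{Aut}^{\ast}\left(X,\mu\right)\) through the universal nonsingular Koopman map \(T\mapsto\kappa_{T}\in U\left(L^{2}\left(\mu\right)\right)\), \(\kappa_{T}f=\big(\tfrac{dT^{-1}\mu}{d\mu}\big)^{1/2}f\circ T\). This map is a topological embedding onto a closed subgroup (\cite[\S1]{danilenko1995topological}, \cite[\S4.1]{giordano2007some}). We then pull back a right-invariant metric from the strong operator topology. Concretely, fix a countable family \(\left(f_{k}\right)_{k\geq1}\) of Borel functions with \(\left|f_{k}\right|\leq1\) whose linear span is dense in \(L^{2}\left(\mu\right)\), for instance indicators of a generating algebra of sets. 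Define
\[d\left(S,T\right):=\sum\nolimits_{k\geq1}2^{-k-2}\left\|\kappa_{S}^{-1}f_{k}-\kappa_{T}^{-1}f_{k}\right\|_{L^{2}\left(\mu\right)}.\]

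Two checks are routine. First, since the \(\kappa_{T}\) are unitary, \(\kappa_{(SR)}^{-1}=\kappa_{R}^{-1}\kappa_{S}^{-1}\) (with the appropriate order convention for the homomorphism), and each term is unchanged when \(S,T\) are replaced by \(SR,TR\). This gives right-invariance; if the convention comes out reversed, we use \(\kappa_{T}\) instead of \(\kappa_{T}^{-1}\). Second, on bounded sets of unitaries the strong topology is given by convergence on a dense set of vectors, and taking inverses is a homeomorphism. Hence \(d\) induces the strong topology on the image of \(\kappa\), and therefore the weak topology on \(\mathrm{Aut}^{\ast}\).

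The key estimate is \(\left\|\kappa_{T}f-f\right\|_{L^{2}}\) for \(\left|f\right|\leq1\). Put \(S=\operatorname{supp}\left(T\right)\). For \(x\notin S\) we have \(Tx=x\), and \(\tfrac{dT^{-1}\mu}{d\mu}=1\) a.e. on \(X\setminus S\). The latter holds because \(T\) restricts to the identity there: for \(A\subseteq X\setminus S\), \(T^{-1}\mu\left(A\right)=\mu\left(TA\right)=\mu\left(A\right)\), using that \(T\) is a bijection fixing \(A\) pointwise, so \(T\) also preserves \(S\). So \(\kappa_{T}f-f\) is supported on \(S\), and
\[\left\|\kappa_{T}f-f\right\|_{2}\leq\left\|\kappa_{T}\left(f\mathbf{1}_{S}\right)\right\|_{2}+\left\|f\mathbf{1}_{S}\right\|_{2}\leq2\mu\left(S\right)^{1/2}.\]
Here we used that \(T\) preserves \(S\) and \(\kappa_{T}\) is an isometry.

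This only gives \(\mu\left(S\right)^{1/2}\), not \(\mu\left(S\right)\), and fixing it is the main obstacle. The remedy is to replace the \(L^{2}\) norm by an \(L^{1}\)-type quantity. We use the \(L^{1}\) nonsingular Koopman isometries \(\pi_{T}f:=\tfrac{dT^{-1}\mu}{d\mu}\cdot f\circ T\) on \(L^{1}\left(\mu\right)\), and set
\[d\left(S,T\right):=\sum\nolimits_{k}2^{-k-1}\left\|\pi_{S}^{-1}f_{k}-\pi_{T}^{-1}f_{k}\right\|_{L^{1}}+\sum\nolimits_{k}2^{-k-1}\mu\left(S^{-1}A_{k}\triangle T^{-1}A_{k}\right),\]
possibly with a truncation by \(\min\left\{\cdot,1\right\}\) in each term. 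The same support argument then gives \(\left\|\pi_{T}f-f\right\|_{1}\leq2\mu\left(S\right)\), and also \(\mu\left(T^{-1}A\triangle A\right)\leq\mu\left(S\right)\). With the weights \(2^{-k-1}\), each sum is bounded by \(\mu\left(S\right)\) up to a factor, and we rescale the weights to get the constant \(1\). Compatibility follows because the weak topology on \(\mathrm{Aut}^{\ast}\) coincides with the topology of strong convergence of \(\pi_{T}\) on \(L^{1}\) (equivalently, convergence of \(\mu\left(TA\triangle A\right)\) together with \(L^{1}\)-convergence of Radon--Nikodym derivatives), by the uniqueness of the Polish topology \cite[Thm.~1.4]{lema2022polish}. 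Right-invariance again follows from the isometry and homomorphism property.

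For the final sentence, we first identify \(\left(Y,\nu\right)\) with \(\left(X,\mu'\right)\) for a probability \(\mu'\sim\nu\), so \(\mathrm{Aut}\left(Y,\nu\right)\) embeds as the closed stabilizer of \(\nu\) in \(\mathrm{Aut}^{\ast}\). The supports are the same sets, so \(\mu\)-measured supports bound the restricted metric.
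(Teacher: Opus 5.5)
Your route is the paper's. You realize the weak topology via the $L^{1}$ nonsingular Koopman isometries, sum over a countable bounded family with dense span, and use the support estimate $\|\pi_{T}f-f\|_{L^{1}}\leq2\|f\|_{\infty}\,\mu(\operatorname{supp}(T))$, which you derive correctly. Your observation that $L^{2}$ only gives $\mu(\operatorname{supp}(T))^{1/2}$ is exactly why one works in $L^{1}$.

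\textbf{The error.} The metric you finally write down is not right-invariant. The claim that right-invariance ``follows from the isometry and homomorphism property'' fails for the second sum. Replacing $(S,T)$ by $(SR,TR)$ turns $\mu(S^{-1}A_{k}\triangle T^{-1}A_{k})$ into $\mu\big(R^{-1}(S^{-1}A_{k}\triangle T^{-1}A_{k})\big)$. These differ unless $R$ preserves $\mu$: a nonsingular $R$ acts isometrically on $L^{1}$ only once the Radon--Nikodym derivative is included, and the bare set function $B\mapsto\mu(B)$ is not $R$-invariant.

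This term is also unnecessary. Apply strong convergence $\pi_{T_{i}}\to\pi_{T}$ on $L^{1}$ to $\mathbf{1}_{X}$ and $\mathbf{1}_{A}$. This gives $\frac{dT^{-1}\mu}{d\mu}\,\big|\mathbf{1}_{T_{i}^{-1}A}-\mathbf{1}_{T^{-1}A}\big|\to0$ in $L^{1}$. Since $\frac{dT^{-1}\mu}{d\mu}>0$ a.e., this forces $\mu(T_{i}^{-1}A\triangle T^{-1}A)\to0$. So simply delete the second sum.

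\textbf{The ordering issue you flagged.} With $\pi_{T}f=\frac{dT^{-1}\mu}{d\mu}\,f\circ T$ one has $\pi_{S}\pi_{T}=\pi_{TS}$. Hence $T\mapsto\pi_{T}^{-1}$ is a homomorphism, and $\sum_{k}c_{k}\|\pi_{S}^{-1}f_{k}-\pi_{T}^{-1}f_{k}\|$ is left-invariant. The right-invariant version is $\sum_{k}c_{k}\|\pi_{S}f_{k}-\pi_{T}f_{k}\|$. The paper sidesteps this by using $\min\{1,\|\kappa_{ST^{-1}}f_{n}-f_{n}\|_{L^{1}}\}$. That quantity depends only on $ST^{-1}$, and it satisfies symmetry and the triangle inequality whichever way the representation composes.

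\textbf{After the corrections.} Your first sum alone gives $d(e,T)=\sum_{k}2^{-k-1}\|\pi_{T}f_{k}-f_{k}\|_{L^{1}}\leq\mu(\operatorname{supp}(T))$, with no rescaling needed. For compatibility, it is simpler to cite that the weak topology is induced by the $L^{1}$ Koopman representation \cite[\S4.1]{giordano2007some}, as the paper does. Invoking Le Ma\^{\i}tre's uniqueness theorem would additionally require you to check that the $L^{1}$-strong topology is itself a Polish group topology on $\mathrm{Aut}^{\ast}(X,\mu)$.
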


\begin{proof}[Proof of \cref{lem:suppweak}]
Let \(\kappa:\mathrm{Aut}^{\ast}\left(X,\mu\right)\to\mathrm{Iso}\left(L^{1}\left(X,\mu\right)\right)\) be the homomorphism
\[\kappa_{T}f:=\frac{dT_{\ast}\mu}{d\mu}\cdot f\circ T^{-1}.\]
The weak topology is induced by the strong operator topology via \(\kappa\); see \cite[\S4.1]{giordano2007some}. Choose an \(L^{1}\)-dense sequence \(\left(f_{n}\right)_{n\geq1}\subseteq L^{\infty}\left(X,\mu\right)\) and positive sequence \(\left(c_{n}\right)_{n\geq 1}\) with \(\sum\nolimits_{n\geq1}2c_{n}\left\|f_{n}\right\|_{\infty}\leq1\). Define
\[d\left(S,T\right):=\sum\nolimits_{n\geq1}c_{n}\cdot\min\big\{1,\left\|\kappa_{ST^{-1}}f_{n}-f_{n}\right\|_{L^{1}\left(\mu\right)}\big\}.\]
Then \(d\) is a compatible right-invariant metric for the weak topology. Let \(T\in\mathrm{Aut}^{\ast}\left(X,\mu\right)\). For every \(f\in L^{\infty}\left(X,\mu\right)\), since \(\operatorname{supp}\left(T\right)\) is \(T\)-invariant \(\kappa_{T}f-f\) vanishes on \(X\backslash\operatorname{supp}\left(T\right)\), and hence
\[\left\|\kappa_{T}f-f\right\|_{L^{1}\left(\mu\right)}\leq\int_{\operatorname{supp}\left(T\right)}\left|\kappa_{T}f\right|d\mu+\int_{\operatorname{supp}\left(T\right)}\left|f\right|d\mu=\int_{\operatorname{supp}\left(T\right)}\left|f\right|d\mu+\int_{\operatorname{supp}\left(T\right)}\left|f\right|d\mu\leq2\left\|f\right\|_{\infty}\mu\left(\operatorname{supp}\left(T\right)\right).\]
It follows that
\[d\left(e,T\right)\leq\sum\nolimits_{n\geq1}2c_{n}\left\Vert f_{n}\right\Vert_{\infty}\mu\left(\operatorname{supp}\left(T\right)\right)\leq\mu\left(\text{supp}\left(T\right)\right).\qedhere\]
\end{proof}

\subsection{\(L^{0}\)-groups with compact targets}

Let \(\left(Y,\eta\right)\) be a standard nonatomic probability space and let \(K\) be a compact metrizable group. Choose a compatible bi-invariant metric \(\delta\) on \(K\) with \(\operatorname{diam}\left(K,\delta\right)\leq1\), and equip \(L^{0}\left(\left(Y,\eta\right),K\right)\) with the compatible right-invariant metric
\[d\left(f_{1},f_{2}\right):=\int_{Y}\delta\left(f_{1}\left(y\right),f_{2}\left(y\right)\right)d\eta\left(y\right).\]
The L\'{e}vy property of these groups is classical; see \cite[\S A.5]{glasner2005automorphism}, \cite[\S4.2]{Pestov2006}, \cite[Corrs.~2.10--2.11]{giordano2007some}.

\smallskip

Let \(\left(\mathcal{P}_{n}\right)_{n\geq1}\) be a refining sequence of finite Borel partitions of \(Y\),
\[\mathcal{P}_{n}=\left\{P_{n,1},\dotsc,P_{n,N_{n}}\right\},\]
which generates the measure algebra of \(\left(Y,\eta\right)\) and satisfies \(\max_{i}\eta\left(P_{n,i}\right)\to0\). Let \(K_{n}\) be the compact subgroup of functions which are constant on each atom of \(\mathcal{P}_{n}\). Then \(K_{n}\cong K^{\mathcal{P}_{n}}\), the groups \(K_{n}\) are increasing, and their union is dense in \(L^{0}\left(\left(Y,\eta\right),K\right)\).

\smallskip

For \(0\leq i\leq N_{n}\), let \(H_{n,i}<K_{n}\) be the subgroup supported on \(P_{n,1}\cup\dotsm\cup P_{n,i}\). Every coset in \(H_{n,i}/H_{n,i-1}\) has a representative supported on \(P_{n,i}\), and therefore of \(d\)-distance at most
\[a_{n,i}:=\eta\left(P_{n,i}\right)\]
from the identity. Thus \(\rho_{n,i}\leq a_{n,i}\), and hence
\[\ell_{n}\leq\sum\nolimits_{i=1}^{N_{n}}a_{n,i}=1\quad\text{and}\quad\varrho_{n}\leq\max_{1\leq i\leq N_{n}}a_{n,i}\to0.\]
Therefore, \(L^{0}\left(\left(Y,\eta\right),K\right)\) is a Wasserstein group by \cref{mthm2}.

\subsection{Measure preserving automorphism groups}

Let \(\left(X,\mu\right)\) be a standard nonatomic finite or infinite \(\sigma\)-finite measure space. The group \(\mathrm{Aut}\left(X,\mu\right)\), with the weak topology, is a classical L\'{e}vy group; see \cite[\S1]{glasner2005automorphism}, \cite[\S4.5]{Pestov2006}, \cite[Thm.~4.2]{giordano2007some}.

\smallskip

First assume that \(\mu\) is a probability measure. let \(d\) be a metric on \(\mathrm{Aut}\left(X,\mu\right)\) as in \cref{lem:suppweak}. Identify \(\left(X,\mu\right)\) with \(\left(\left[0,1\right],\lambda\right)\). Let \(\mathcal{D}_{n}\) be the dyadic partition into \(N_{n}:=2^{n}\) intervals, and let \(K_{n}\cong S_{N_{n}}\) be the finite group of interval exchange transformations which permute these intervals by translations. The groups \(K_{n}\) are increasing, and \(\bigcup_{n\geq1}K_{n}\) is dense in \(\mathrm{Aut}\left(\left[0,1\right],\lambda\right)\) (see \cite[Thm.~4.1]{giordano2007some}).

\smallskip

For \(0\leq i\leq N_{n}\), let \(H_{n,i}<K_{n}\) be the subgroup which permutes the first \(i\) dyadic intervals and fixes the rest. Every coset in \(H_{n,i}/H_{n,i-1}\) has a representative among
\[e,\left(1\,i\right),\dotsc,\left(i-1\,i\right),\]
and each such representative is supported on at most two dyadic intervals. Thus, with
\[a_{n,i}:=2/N_{n},\]
we have \(\rho_{n,i}\leq a_{n,i}\). Hence
\[\ell_{n}\leq\sum\nolimits_{i=1}^{N_{n}}a_{n,i}=2\quad\text{and}\quad\varrho_{n}\leq\max_{1\leq i\leq N_{n}}a_{n,i}=2/N_{n}\to0.\]
Therefore, when \(\mu\) is a probability measure, \(\mathrm{Aut}\left(X,\mu\right)\) is a Wasserstein group by \cref{mthm2}.

\smallskip

If \(\mu\) is infinite \(\sigma\)-finite, identify \(\left(X,\mu\right)\) with \(\left(\left[0,\infty\right),\lambda\right)\). Choose a probability measure \(\mu_{o}\) equivalent to \(\mu\), and let \(d\) be a metric on \(\mathrm{Aut}\left(X,\mu\right)\) as in \cref{lem:suppweak}. For \(n\geq1\), let \(\mathcal{P}_{n}\) be the partition of \(\left[0,n\right)\) into \(N_{n}=n2^{n}\) intervals of \(\mu\)-measure \(2^{-n}\). Let \(K_{n}\cong S_{N_{n}}\) be the finite group of interval exchange transformations which permute the atoms of \(\mathcal{P}_{n}\) by translations and fix \(\left[n,\infty\right)\) pointwise. The groups \(K_{n}\) are increasing, and by \cite[Thm.~4.1]{giordano2007some}, \(\bigcup_{n\geq1}K_{n}\) is dense in \(\mathrm{Aut}\left(X,\mu\right)\). Write
\[\mathcal{P}_{n}=\left\{P_{n,1},\dotsc,P_{n,N_{n}}\right\},\quad p_{n,i}:=\mu_{o}\left(P_{n,i}\right),\]
and order the atoms so that \(p_{n,1}\leq\dotsm\leq p_{n,N_{n}}\). Since \(\mu_{o}\ll\mu\) and \(\mu\left(P_{n,i}\right)=2^{-n}\) for all \(i\), we have \(\max_{0\leq i\leq N_{n}}p_{n,i}\to0\). For \(0\leq i\leq N_{n}\), let \(H_{n,i}<K_{n}\) be the subgroup which permutes \(P_{n,1},\dotsc,P_{n,i}\) and fixes the rest. Every coset in \(H_{n,i}/H_{n,i-1}\) has a representative which is either the identity or a transposition between \(P_{n,i}\) and some \(P_{n,j}\), \(j<i\). Such a representative is supported on \(P_{n,i}\cup P_{n,j}\), and hence its \(d\)-distance from the identity is at most
\(p_{n,i}+p_{n,j}\leq2p_{n,i}\). Thus, with
\[a_{n,i}:=2p_{n,i},\]
we have \(\rho_{n,i}\leq a_{n,i}\), and therefore
\[\ell_{n}\leq\sum\nolimits_{i=1}^{N_{n}}a_{n,i}=2\mu_{o}\left(\left[0,n\right)\right)\leq2\quad\text{and}\quad\varrho_{n}\leq\max_{1\leq i\leq N_{n}}a_{n,i}\to0.\]
Therefore, also when \(\mu\) is infinite \(\sigma\)-finite, \(\mathrm{Aut}\left(X,\mu\right)\) is a Wasserstein group by \cref{mthm2}.

\subsection{Amenable full groups}\label{sct:fullamen}

Let \(\mathcal{R}\) be a nonsingular ergodic amenable countable Borel equivalence relation on a standard nonatomic probability space \(\left(X,\mu\right)\). The full group \(\left[\mathcal{R}\right]\) consists of the nonsingular automorphisms whose graph is contained in \(\mathcal{R}\). Equip \(\left[\mathcal{R}\right]\) with the right-invariant uniform metric
\[d\left(S,T\right):=\mu\left(\left\{x:S^{-1}x\neq T^{-1}x\right\}\right).\]

Giordano--Pestov prove that \(\left[\mathcal{R}\right]\) is a L\'{e}vy group precisely when \(\mathcal{R}\) is amenable; see \cite[Thm.~5.7]{giordano2007some}, \cite[\S4.6]{Pestov2006}. By the Connes--Feldman--Weiss theorem, as in the concrete model in \cite[\S5.2]{giordano2007some}, we may assume that
\[X=\prod\nolimits_{n\geq 1}\{0,1\},\]
that \(\mathcal{R}\) is the tail equivalence relation,
\[\left(x,y\right)\in\mathcal{R}\iff\exists_{N=N\left(x,y\right)\geq 1},\,\forall_{n\geq N},\,x_{n}=y_{n},\]
and that \(\mu\) is a nonatomic probability measure which is quasi-invariant to \(\mathcal{R}\). For \(n\geq1\), let \(\Omega_{n}:=\left\{0,1\right\}^{n}\) and let \(\mathcal{P}_{n}\) be the partition into \(n\)-cylinders
\[P_{n,\omega}:=\left\{x\in X:\left(x_{1},\dotsc,x_{n}\right)=\omega\right\},\quad\omega\in\Omega_{n}.\]
Let \(K_{n}\cong S_{\Omega_{n}}\) be the finite group of permutation of \(\Omega_{n}\), acting on \(X\) by permuting the first \(n\) coordinates. Since each element of \(K_{n}\) changes only finitely many coordinates we have \(K_{n}<\left[\mathcal{R}\right]\). The groups \(K_{n}\) are increasing, and by \cite[Lem.~5.2]{giordano2007some}, \(\bigcup_{n\geq1}K_{n}\) is uniformly dense in \(\left[\mathcal{R}\right]\).

\smallskip

Write \(p_{n,\omega}:=\mu\left(P_{n,\omega}\right)\). Since the cylinder partitions generate the Borel structure and \(\mu\) is nonatomic,
\[\max\nolimits_{\omega\in\Omega_{n}}p_{n,\omega}\to0.\]
Indeed, otherwise there are \(\epsilon>0\), integers \(n_{k}\to\infty\), and cylinders \(P_{n_{k},\omega_{k}}\) with \(\mu\left(P_{n_{k},\omega_{k}}\right)\geq\epsilon\). By compactness of \(X\), after passing to a subsequence, the words \(\omega_{k}\) converge to some \(x\in X\). Hence for every \(m\geq1\), eventually \(P_{n_{k},\omega_{k}}\subseteq P_{m,\left(x_{1},\dotsc,x_{m}\right)}\), and therefore \(\mu\left(P_{m,\left(x_{1},\dotsc,x_{m}\right)}\right)\geq\epsilon\). By continuity \(\mu\left(\{x\}\right)\geq\epsilon\), contradicting non-atomicity.

\smallskip

Let \(N_{n}=2^{n}\), and enumerate the cylinders \(P_{n,1},\dotsc,P_{n,N_{n}}\) in such a way that
\[p_{n,1}\leq\dotsm\leq p_{n,N_{n}}.\]
For \(0\leq i\leq N_{n}\), let \(H_{n,i}<K_{n}\) be the subgroup which permutes \(P_{n,1},\dotsc,P_{n,i}\) and fixes the rest. Let \(uH_{n,i-1}\in H_{n,i}/H_{n,i-1}\). If \(u\left(P_{n,i}\right)=P_{n,j}\), then \(j\leq i\). If \(j=i\), the coset has the identity as a representative. If \(j<i\), let \(s\) be the transposition of the two cylinders \(P_{n,i}\) and \(P_{n,j}\), fixing all other \(n\)-cylinders. Then \(s^{-1}u\in H_{n,i-1}\), and hence \(s\) is a representative of the coset \(uH_{n,i-1}\). Moreover, \(s\) is supported on \(P_{n,i}\cup P_{n,j}\), and therefore \(d\left(e,s\right)\leq p_{n,i}+p_{n,j}\leq2p_{n,i}\). Then with
\[a_{n,i}:=2p_{n,i},\]
we have \(\rho_{n,i}\leq a_{n,i}\), hence
\[\ell_{n}\leq\sum\nolimits_{i=1}^{N_{n}}a_{n,i}=2\quad\text{and}\quad\varrho_{n}\leq\max_{1\leq i\leq N_{n}}a_{n,i}\to0.\]
Therefore, \(\left[\mathcal{R}\right]\) is a Wasserstein group by \cref{mthm2}.

\subsection{Nonsingular automorphism groups}

Let \(\left(X,\mu\right)\) be a standard nonatomic finite or infinite \(\sigma\)-finite nonatomic measure space. The nonsingular automorphism group \(\mathrm{Aut}^{\ast}\left(X,\mu\right)\) is a L\'{e}vy group by \cite[Thm.~6.1]{giordano2007some}; see also \cite[\S4.6]{Pestov2006}. We verify that it is Wasserstein. Note that \(\mathrm{Aut}^{\ast}\left(X,\mu\right)\cong\mathrm{Aut}^{\ast}\left(X,\nu\right)\) whenever \(\mu\sim\nu\) both nonatomic, so we may assume that \(\mu\) is a probability measure and use the metric \(d\) from \cref{lem:suppweak}.

\smallskip

By \cite[Prop.~6.2]{giordano2007some}, there is an ergodic nonsingular transformation \(T\) whose full group \(\left[T\right]\), that is the full group of the orbit equivalence relation induced from \(T\), is weakly dense in \(\mathrm{Aut}^{\ast}\left(X,\mu\right)\). Let \(\delta\) denote the uniform metric on \(\left[T\right]\),
\[\delta\left(S,R\right):=\mu\left(\left\{x:S^{-1}x\neq R^{-1}x\right\}\right).\]
By \cref{lem:suppweak} and using right-invariance of \(d\), for \(S,R\in\left[T\right]\) we have
\[d\left(S,R\right)=d\left(e,SR^{-1}\right)\leq\mu\left(\operatorname{supp}\left(SR^{-1}\right)\right)=\delta\left(S,R\right).\]
Hence \(\delta\)-convergence in \(\left[T\right]\) implies \(d\)-convergence in \(\mathrm{Aut}^{\ast}\left(X,\mu\right)\).

\smallskip

Since the orbit equivalence relation induced from \(T\) is amenable, by the construction in \cref{sct:fullamen} there is a skeleton consisting of finite subgroups
\[K_{1}\leq K_{2}\leq\dotsm<\left[T\right]\]
such that \(\bigcup_{n\geq 1}K_{n}\) is \(\delta\)-dense in \(\left[T\right]\), as well as martingale chains inside each \(K_{n}\) with quotient representatives supported on \(P_{n,i}\cup P_{n,j}\), \(j<i\). Since \(\bigcup_{n\geq 1}K_{n}\) is \(\delta\)-dense in \(\left[T\right]\) which is \(d\)-dense in \(\mathrm{Aut}^{\ast}\left(X,\mu\right)\), then \(\bigcup_{n\geq1}K_{n}\) is \(d\)-dense in \(\mathrm{Aut}^{\ast}\left(X,\mu\right)\). Thus \(K_{1}\leq K_{2}\leq\dotsm\) is a skeleton for \(\mathrm{Aut}^{\ast}\left(X,\mu\right)\).

\smallskip

Finally, we now need to compute the quotient radii with respect to \(d\). However, the same support estimates apply: if a quotient representative \(s\in K_{n}\) is supported on \(P_{n,i}\cup P_{n,j}\), \(j<i\), then
\[d\left(e,s\right)\leq\mu\left(\operatorname{supp}\left(s\right)\right)\leq p_{n,i}+p_{n,j}\leq2p_{n,i}.\]
Therefore, the same parameters \(a_{n,i}\) as in the previous proof are valid for the ambient metric \(d\), and the same estimates on the lengths and scales show that \(\mathrm{Aut}^{\ast}\left(X,\mu\right)\) is a Wasserstein group by \(\cref{cor:qutifilt}\).

\subsection{Isometry groups of \(L^{p}\)-spaces}

Let \(1\leq p<\infty\), \(p\neq2\), and let \(\left(X,\mu\right)\) be a standard nonatomic finite or infinite \(\sigma\)-finite measure space. The fact that \(\mathrm{Iso}\left(L^{p}\left(X,\mu\right)\right)\) with the strong operator topology is a L\'{e}vy group is due to Giordano--Pestov \cite[Thm.~6.6]{giordano2007some}. We verify that it is a Wasserstein group. Note that \(L^{p}\left(X,\mu\right)\cong L^{p}\left(X,\nu\right)\) whenever \(\mu\sim\nu\) both nonatomic (via multiplication with \(\left(d\nu/d\mu\right)^{1/p}\)), so we will assume that \(\mu\) is a probability measure.

\smallskip

By the Banach--Lamperti theorem (see \cite[Cor.~6.8]{giordano2007some}),
\begin{equation}\label{eq:BanLam}
\mathrm{Iso}\left(L^{p}\left(X,\mu\right)\right)\cong L^{0}\left(\left(X,\mu\right),C\right)\rtimes\mathrm{Aut}^{\ast}\left(X,\mu\right)
\end{equation}
as topological groups, where \(C=S^{1}\) in the complex case and \(C=\mathbb{Z}/2\mathbb{Z}\) in the real case. In order to identify a convenient metric, consider the skew-product homomorphism
\[\Theta:L^{0}\left(\left(X,\mu\right),C\right)\rtimes\mathrm{Aut}^{\ast}\left(X,\mu\right)\to\mathrm{Aut}^{\ast}\left(X\times C,\mu\otimes m_{C}\right),\quad\Theta\left(u,T\right)\left(x,z\right):=\left(Tx,u\left(Tx\right)z\right).\]
This is an injective homomorphism, and we claim that it is bi-continuous, thus a topological embedding. We use the standard description of the weak topology on \(\mathrm{Aut}^{\ast}\left(Y,\nu\right)\) through the associated \(L^{p}\)-isometries: to every  \(T\in\mathrm{Aut}^{\ast}\left(Y,\nu\right)\) corresponds \(V_{T}\in\mathrm{Iso}\left(L^{p}\left(Y,\nu\right)\right)\) given by \(V_{T}f\left(y\right):=\big(\frac{dT^{-1}\nu}{d\nu}\left(y\right)\big)^{1/p}f\left(Ty\right)\). For every \(S=\Theta\left(u,T\right)\), since \(m_{C}\) is invariant we have
\[\frac{dS^{-1}\left(\mu\otimes m_{C}\right)}{d\left(\mu\otimes m_{C}\right)}\left(x,z\right)=\frac{dT^{-1}\mu}{d\mu}\left(x\right).\]
Denoting by \(\zeta:C\to\mathbb{C}\) the map \(\zeta\left(c\right)=c\), it follows that for every \(f\in L^{p}\left(X,\mu\right)\) and every \(k\),
\begin{equation}\label{eq:Vfunc}
V_{\Theta\left(u,T\right)}\left(f\otimes1\right)=V_{T}f\otimes1\quad\text{and}\quad V_{\Theta\left(u,T\right)}\left(f\otimes\zeta^{k}\right)=V_{T}\left(fu^{k}\right)\otimes\zeta^{k}.
\end{equation}
We prove continuity of \(\Theta\). Suppose \(\left(u_{m},T_{m}\right)\to\left(u,T\right)\), so \(u_{m}\to u\) in measure and \(V_{T_{m}}\to V_{T}\) strongly. For every \(f\in L^{p}\left(X,\mu\right)\) and every \(k\), by dominated convergence we have \(fu_{m}^{k}\to fu^{k}\in L^{p}\left(X,\mu\right)\), hence
\[\left\|V_{T_{m}}\left(fu_{m}^{k}\right)-V_{T}\left(fu^{k}\right)\right\|_{p}\leq\left\|fu_{m}^{k}-fu^{k}\right\|_{p}+\left\|V_{T_{m}}\left(fu^{k}\right)-V_{T}\left(fu^{k}\right)\right\|_{p}\to0.\]
Since the functions \(f\otimes\zeta^{k}\) are dense and \(V_{\Theta\left(u_{m},T_{m}\right)}\) are isometries, \(\Theta\left(u_{m},T_{m}\right)\to\Theta\left(u,T\right)\). We prove now continuity of \(\Theta^{-1}\) on \(\operatorname{Im}\left(\Theta\right)\). Suppose \(\Theta\left(u_{m},T_{m}\right)\to\Theta\left(u,T\right)\) in \(\mathrm{Aut}^{\ast}\left(X\times C,\mu\otimes m_{C}\right)\). By looking at \(f\otimes1\) in \eqref{eq:Vfunc}, we get \(V_{T_{m}}f\to V_{T}f\) for every \(f\in L^{p}\left(X,\mu\right)\), hence \(T_{m}\to T\in\mathrm{Aut}^{\ast}\left(X,\mu\right)\). By looking at \(1\otimes\zeta\) in \eqref{eq:Vfunc}, we get \(V_{T_{m}}u_{m}\to V_{T}u\in L^{p}\left(X,\mu\right)\). Therefore, using that \(V_{T_{m}}\) is an isometry,
\[\left\|u_{m}-u\right\|_{p}=\left\|V_{T_{m}}u_{m}-V_{T_{m}}u\right\|_{p}\leq\left\|V_{T_{m}}u_{m}-V_{T}u\right\|_{p}+\left\|V_{T}u-V_{T_{m}}u\right\|_{p}\to0.\]
Then this \(L^{p}\)-convergence implies \(u_{m}\to u\) in measure, and so \(\left(u_{m},T_{m}\right)\to\left(u,T\right)\).

\smallskip

Now that we found that \(\Theta\) is a topological embedding, let \(d\) be the pullback to \(\mathrm{Iso}\left(L^{p}\left(X,\mu\right)\right)\) via \(\Theta\) of the metric from \cref{lem:suppweak} on \(\mathrm{Aut}^{\ast}\left(X\times C,\mu\otimes m_{C}\right)\). Since \(\Theta\) is a topological embedding, \(d\) is a compatible right-invariant metric on \(\mathrm{Iso}\left(L^{p}\left(X,\mu\right)\right)\).

\smallskip

As in the previous construction, choose an
increasing finite skeleton
\[S_{\mathcal{P}_{1}}\leq S_{\mathcal{P}_{2}}\leq\dotsm<\mathrm{Aut}^{\ast}\left(X,\mu\right),\]
where each \(S_{\mathcal{P}_{n}}\) is realized by the nonsingular automorphisms permuting the atoms of \(\mathcal{P}_{n}\), in such a way that \(\left(\mathcal{P}_{n}\right)_{n\geq 1}\) is refining, generates the measure algebra, and \(\max_{1\leq i\leq N_{n}}\mu\left(P_{n,i}\right)\to0\). Then we know that \(\bigcup_{n}S_{\mathcal{P}_{n}}\) is dense in \(\mathrm{Aut}^{\ast}\left(X,\mu\right)\). Let \(C^{\mathcal{P}_{n}}\) be the compact group of \(C\)-valued functions which are constant on the atoms of \(\mathcal{P}_{n}\). Put the compact subgroup
\[K_{n}:=C^{\mathcal{P}_{n}}\rtimes S_{\mathcal{P}_{n}}.\]
Concretely, the elements of \(K_{n}\) are viewed as nonsingular automorphisms of  \(X\times C\) via
\[\left(u,\sigma\right).\left(x,z\right)=\left(\sigma x,u\left(\sigma x\right)z\right).\]
By construction we have \(K_{1}\leq K_{2}\leq\dotsm\). Since \(\bigcup_{n\geq1}C^{\mathcal{P}_{n}}\) is dense in \(L^{0}\left(\left(X,\mu\right),C\right)\) and \(\bigcup_{n\geq1}S_{\mathcal{P}_{n}}\) is dense in \(\mathrm{Aut}^{\ast}\left(X,\mu\right)\), it follows from \(\eqref{eq:BanLam}\) that \(\bigcup_{n\geq1}K_{n}\) is dense in \(\mathrm{Iso}\left(L^{p}\left(X,\mu\right)\right)\).

\smallskip

Fix \(n\geq 1\), and let us define a martingale chain in \(K_{n}\). Order the atoms of \(\mathcal{P}_{n}\) so that
\[p_{n,1}\leq\dotsm\leq p_{n,N_{n}},\quad p_{n,i}:=\mu\left(P_{n,i}\right).\]
We do this in two steps, first the \(u\)-coordinate and then the \(\sigma\)-coordinate, obtaining a martingale chain
\[\{e\}=H_{n,0}\leq H_{n,1}\leq\dotsm\leq H_{n,N_{n}}\leq H_{n,N_{n}+1}\leq\dotsm\leq H_{n,2N_{n}}=K_{n}.\]
For \(0\leq i\leq N_{n}\) let
\[M_{n,i}:=\left\{u\in C^{\mathcal{P}_{n}}:u\mid_{X\setminus\left(P_{n,1}\cup\dotsm\cup P_{n,i}\right)}\equiv 1\right\},\text{ and put } H_{n,i}:=M_{n,i}\rtimes\left\{e\right\}.\]
This forms the first step of the martingale chain. Next, for \(0\leq i\leq N_{n}\) let
\[S_{n,i}:=\left\{\sigma\in S_{\mathcal{P}_{n}}:\sigma\left(P_{n,j}\right)=P_{n,j}\text{ for every }j>i\right\},\text{ and put }H_{n,N_{n}+i}:=C^{\mathcal{P}_{n}}\rtimes S_{n,i}.\]
This gives a martingale chain on \(K_{n}\), and we now estimate the quotient radii. Since \(d\) is the pullback of the metric from \(\cref{lem:suppweak}\) through \(\Theta\), every element \(s\in K_{n}\) satisfies
\[d\left(e,s\right)\leq\left(\mu\otimes m_{C}\right)\left(\operatorname{supp}\left(\Theta\left(s\right)\right)\right).\]

\smallskip

For \(1\leq i\leq N_{n}\), a coset in \(H_{n,i}/H_{n,i-1}\) depends only on the value on \(P_{n,i}\), so it has a representative \(\left(v,e\right)\), where \(v=1\) outside \(P_{n,i}\). This representative satisfies \(\operatorname{supp}\left(\Theta\left(v,e\right)\right)\subseteq P_{n,i}\times C\), and therefore
\[d\left(e,\left(v,e\right)\right)\leq p_{n,i}.\]
Then for the first \(N_{n}\) steps we take
\[a_{n,i}:=p_{n,i},\quad 1\leq i\leq N_{n}.\]
For the next \(N_{n}\) steps, for \(1\leq i\leq N_{n}\), a coset in
\(H_{n,N_{n}+i}/H_{n,N_{n}+i-1}\) depends only on the position of the atom \(P_{n,i}\) among
\(P_{n,1},\dotsc,P_{n,i}\). Thus it has a representative \(\left(1,\tau\right)\), where \(\tau=e\), or \(\tau\) is a transposition exchanging \(P_{n,i}\) with some \(P_{n,j}\), \(j<i\). Such a representative satisfies \(\operatorname{supp}\left(\Theta\left(1,\tau\right)\right)\subseteq\left(P_{n,i}\cup P_{n,j}\right)\times C\), so
\[d\left(e,\left(1,\tau\right)\right)\leq p_{n,i}+p_{n,j}\leq2p_{n,i}.\]
Then for the second \(N_{n}\) steps we take
\[a_{n,N_{n}+i}:=2p_{n,i},\quad 1\leq i\leq N_{n}.\]
Therefore, \(\rho_{n,i}\leq a_{n,i}\) for every \(1\leq i\leq2N_{n}\) and so
\[\ell_{n}\leq\sum\nolimits_{i=1}^{2N_{n}}a_{n,i}
=\sum\nolimits_{i=1}^{N_{n}}p_{n,i}+\sum\nolimits_{i=1}^{N_{n}}2p_{n,i}=3\quad\text{ and }\quad\varrho_{n}\leq2\max_{1\leq i\leq N_{n}}p_{n,i}\to0.\]
Therefore, \(\mathrm{Iso}\left(L^{p}\left(X,\mu\right)\right)\) is a Wasserstein group by \cref{mthm2}.

\subsection{The unitary group of the hyperfinite \(\mathrm{II}_{1}\)-factor}\label{sct:unithyperf}

Let \(R\) be the hyperfinite \(\mathrm{II}_{1}\)-factor with trace \(\tau\). The strong topology on \(U\left(R\right)\) is induced by the compatible bi-invariant metric
\[d\left(u,v\right):=\tau\left(\left|u-v\right|\right).\]
The L\'{e}vy property of \(U\left(R\right)\) follows from the finite dimensional subfactor argument; see \cite[Prop.~3.5]{giordano2007some}, \cite[\S4.4]{Pestov2006}. The same finite dimensional skeleton satisfies the stronger Wasserstein criterion.

\smallskip

Choose an increasing sequence of matrix subalgebras
\[M_{k_{1}}\left(\mathbb{C}\right)\subseteq M_{k_{2}}\left(\mathbb{C}\right)\subseteq\dotsm\subseteq R\]
whose union is \(\left\|\cdot\right\|_{2}\)-dense in \(R\), and put
\[K_{n}:=U\left(M_{k_{n}}\left(\mathbb{C}\right)\right)<U\left(R\right).\]
Then \(\bigcup_{n\geq1}K_{n}\) is dense in \(U\left(R\right)\).  On \(K_{n}\) we have the normalized trace metric \(d_{n}\left(u,v\right)=\operatorname{tr}_{k_{n}}\left(\left|u-v\right|\right)\), as in \cref{exm:findimunit}. Using the martingale chain as in \cref{exm:findimunit}, every coset in \(H_{n,i}/H_{n,i-1}\) has a representative \(s\) with \(d_{n}\left(e,s\right)\leq4/k_{n}\), and thus
\[\rho_{n,i}\leq a_{n,i}:=\frac{4}{k_{n}}.\]
It follows that
\[\ell_{n}\leq\sum\nolimits_{i=1}^{k_{n}}a_{n,i}=4\quad\text{and}\quad\varrho_{n}\leq\frac{4}{k_{n}}\to0.\]
Therefore, \(U\left(R\right)\) is a Wasserstein group by \cref{mthm2}.

\section{Examples of locally Wasserstein groups}\label[appendix]{app:locWass}

In this appendix we verify the local Wasserstein property for some L\'{e}vy groups, as well as for a Polish group which is not L\'{e}vy. In all cases, our strategy is to show that the group is topologically generated by continuous images of \(L^{0}\)-groups with appropriate compact targets. It is worth mentioning that a negative result related in spirit to this strategy for \(\operatorname{Aut}\left(X,\mu\right)\) was established in \cite[Thm.~1.1]{solecki2023generic}.

\subsection{\(L^{0}\)-groups with compactly approximable target}

Let \(H\) be a Polish group admitting a skeleton
\[K_{1}\leq K_{2}\leq\dotsm<H,\]
and let \(\left(Y,\eta\right)\) be a standard nonatomic probability space. Then the group
\[L^{0}\left(\left(Y,\eta\right),H\right),\]
is Polish in the topology of convergence in measure by \cite[Prop.~7]{moore1976group}, and it is locally Wasserstein.

\smallskip

Indeed, for every \(n\) we showed that \(L^{0}\left(\left(Y,\eta\right),K_{n}\right)\) is Wasserstein. The inclusion \(K_{n}\hookrightarrow H\) induces a continuous group embedding \(L^{0}\left(\left(Y,\eta\right),K_{n}\right)\to L^{0}\left(\left(Y,\eta\right),H\right)\). The subgroup generated by the images of these homomorphisms is dense. Indeed, every element of \(L^{0}\left(\left(Y,\eta\right),H\right)\) can be approximated in measure by a simple function, and the finitely many values of this simple function can be approximated in \(H\) by elements of \(\bigcup_{n\geq1}K_{n}\). Since the \(K_{n}\)'s are increasing, these finitely many elements all belong to some common \(K_{N}\). Hence the approximating simple function belongs to the image of \(L^{0}\left(\left(Y,\eta\right),K_{N}\right)\). Therefore \(L^{0}\left(\left(Y,\eta\right),H\right)\) is locally Wasserstein.

\subsection{Unitary and orthogonal groups of Hilbert space}

The groups \(U\left(\mathcal{H}\right)\) and \(O\left(\mathcal{H}_{\mathbb{R}}\right)\) with the strong operator topology are L\'{e}vy groups by Gromov--Milman's argument \cite{gromov1983}; see \cite[\S A.4]{glasner2005automorphism}, \cite[\S4.1]{Pestov2006}. We show that they are locally Wasserstein by an argument inspired by \cite[Rem.~1.10]{glasner2005automorphism} (an idea attributed there to Lema\'{n}czyk--Parreau--Thouvenot~\cite{lemanczyk2000gaussian}) and \cite[Lem.~6.5]{Pestov2010}. The key is the following observation:

\begin{lem}\label{lem:L0unitemb}
Let \(E\) be a finite dimensional Hilbert space. Then the map
\[L^{0}\left(\left[0,1\right],U\left(E\right)\right)\to U\left(L^{2}\left(\left[0,1\right],E\right)\right),\footnote{Here \(L^{2}\left(\left[0,1\right],E\right)\) can be thought of as a direct sum of \(\left|E\right|\) copies of the Hilbert space \(L^{2}\left(\left[0,1\right],\mathbb{C}\text{ or }\mathbb{R}\right)\).}\quad\phi\mapsto M_{\phi},\]
given by
\[\left(M_{\phi}\xi\right)\left(t\right):=\phi\left(t\right)\left(\xi\left(t\right)\right),\]
is a continuous group embedding, where on the source we take the topology of convergence in measure, and on the target we take the strong operator topology.
\end{lem}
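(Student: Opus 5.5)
We need three things: \(M_{\phi}\) is a unitary operator depending homomorphically on \(\phi\); the map \(\phi\mapsto M_{\phi}\) is injective; and it is continuous from convergence in measure to the strong operator topology. We do not need to show it is a homeomorphism onto its image, only a continuous injective homomorphism.

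\emph{Algebraic part.} For \(\phi\in L^{0}\left(\left[0,1\right],U\left(E\right)\right)\), each \(\phi\left(t\right)\) is unitary, so \(\left\|M_{\phi}\xi\right\|_{2}^{2}=\int_{0}^{1}\left\|\phi\left(t\right)\xi\left(t\right)\right\|_{E}^{2}dt=\left\|\xi\right\|_{2}^{2}\). The operator \(M_{\phi^{-1}}\) is a two-sided inverse, so \(M_{\phi}\) is unitary. The identities \(M_{\phi\psi}=M_{\phi}M_{\psi}\) and \(M_{\phi}=M_{\phi'}\) when \(\phi=\phi'\) a.e. are immediate from the pointwise definition, so the map is a well-defined homomorphism.

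\emph{Injectivity.} Suppose \(M_{\phi}=\mathrm{I}\). Fix a basis \(e_{1},\dotsc,e_{k}\) of \(E\) and apply \(M_{\phi}\) to the constant functions \(\xi_{j}\equiv e_{j}\). This gives \(\phi\left(t\right)e_{j}=e_{j}\) for a.e. \(t\) and every \(j\). Hence \(\phi=\mathrm{I}\) a.e.

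\emph{Continuity.} Since both sides are topological groups and the map is a homomorphism, it suffices to check continuity at the identity. Moreover, because the \(M_{\phi}\) are uniformly bounded, strong convergence only needs to be tested on a total subset of \(L^{2}\left(\left[0,1\right],E\right)\); we may even use all of \(L^{2}\) directly.

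So let \(\phi_{m}\to\mathrm{I}\) in measure and fix \(\xi\in L^{2}\left(\left[0,1\right],E\right)\). Then
\[\left\|M_{\phi_{m}}\xi-\xi\right\|_{2}^{2}=\int_{0}^{1}\left\|\left(\phi_{m}\left(t\right)-\mathrm{I}\right)\xi\left(t\right)\right\|_{E}^{2}dt.\]
The integrand is dominated by \(4\left\|\xi\left(t\right)\right\|_{E}^{2}\), which is integrable. It tends to \(0\) in measure, since \(\left\|\phi_{m}\left(t\right)-\mathrm{I}\right\|_{\mathrm{op}}\to0\) in measure; all norms on the finite dimensional \(\mathrm{End}\left(E\right)\) are equivalent, so this follows from convergence in measure for any compatible metric on \(U\left(E\right)\). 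Dominated convergence in its convergence-in-measure form (or passing to a.e. convergent subsequences of arbitrary subsequences) then gives the limit \(0\).

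The only mildly delicate point is this last step. There one must use that convergence in measure of \(\phi_{m}\) gives convergence in measure of the integrand, and that dominated convergence holds under convergence in measure. This is exactly where finite dimensionality of \(E\) enters: it makes \(U\left(E\right)\) compact with bounded operator norm, and it makes the metric on \(U\left(E\right)\) equivalent to the operator norm.
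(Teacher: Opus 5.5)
Your proof is correct and follows essentially the same route as the paper: injectivity by testing on the constant functions \(t\mapsto e_{j}\), and continuity by writing \(\|M_{\phi_{m}}\xi-\xi\|_{2}^{2}\) as an integral and applying dominated convergence, with compactness of \(U(E)\) supplying the bound. The differences are cosmetic: you reduce to continuity at the identity and use the convergence-in-measure form of dominated convergence with dominating function \(4\|\xi(t)\|_{E}^{2}\), whereas the paper treats a general limit \(\phi\) and truncates \(\xi\).
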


\begin{proof}[Proof of \cref{lem:L0unitemb}]
It is plainly a homomorphism. Suppose \(\phi_{n}\to\phi\) in measure. For \(\xi\in L^{2}\left(\left[0,1\right],E\right)\),
\[\left\|M_{\phi_{n}}\xi-M_{\phi}\xi\right\|_{2}^{2}=\int_{0}^{1}\left\|\left(\phi_{n}\left(t\right)-\phi\left(t\right)\right)\xi\left(t\right)\right\|^{2}dt.\]
Since \(E\) is finite dimensional, \(U\left(E\right)\) is compact. Then the functions \(t\mapsto\left\|\phi_{n}\left(t\right)-\phi\left(t\right)\right\|\) are uniformly bounded and converge to \(0\) in measure as \(n\to\infty\). By truncating \(\xi\) and using dominated convergence, this integral tends to \(0\), so \(M_{\phi_{n}}\to M_{\phi}\) in the strong operator topology. Finally, to see that it is injective, suppose that \(M_{\phi}=M_{\psi}\). Let \(e_{1},\dotsc,e_{k}\) be an orthonormal basis of \(E\), and for \(1\leq j\leq k\) let \(\xi_{j}\in L^{2}\left(\left[0,1\right],E\right)\) be the constant function \(t\mapsto e_{j}\). Then for each \(1\leq j\leq k\) we have
\[\phi\left(t\right)\left(e_{j}\right)=M_{\phi}\xi_{j}\left(t\right)=M_{\psi}\xi_{j}\left(t\right)=\psi\left(t\right)\left(e_{j}\right)\quad\text{for a.e. }t.\]
Intersecting these \(k\) conull set, we get that \(\phi=\psi\) a.e. and thus they coincide in \(L^{0}\left(\left[0,1\right],U\left(E\right)\right)\).
\end{proof}

Let us show that \(U\left(\mathcal{H}\right)\) is locally Wasserstein. The proof that \(O\left(\mathcal{H}\right)\) is locally Wasserstein is identical. Fix an arbitrary nonempty open set \(O\subset U\left(\mathcal{H}\right)\) for the strong topology. Let \(u\in O\), and so there are \(\xi_{1},\dotsc,\xi_{k}\in\mathcal{H}\) and \(\epsilon>0\) such that
\[W:=\{w\in U\left(\mathcal{H}\right):\left\|w\xi_{j}-u\xi_{j}\right\|<\epsilon,\,1\leq j\leq k\}\subseteq O.\]
Let \(F:=\mathrm{span}\{\xi_{1},\dotsc,\xi_{k}\}\), and consider the finite dimensional subspace \(E:=F+u\left(F\right)\). Let \(v\in U\left(E\right)\) be an isometry extending the isometry \(u\mid_{F}:F\to uF\). Fix a unitary isomorphism
\[J:\mathcal{H}\to L^{2}\left(\left[0,1\right],E\right),\]
which sends every \(\eta\in E\) to the constant function \(t\mapsto\eta\). To see why such \(J\) exists, note that we already have the unitary isomorphism \(\jmath:E\to C_{E}\), \(\jmath:\eta\mapsto\left(t\mapsto\eta\right)\), where \(C_{E}\subset L^{2}\left(\left[0,1\right],E\right)\) is the space of \(E\)-valued constant functions. Then \(C_{E}^{\perp}\) is an infinite dimensional separable Hilbert space and, since \(\mathcal{H}\) is infinite dimensional while \(E\) is finite dimensional, also \(E^{\perp}\) is an infinite dimensional separable Hilbert space. Therefore, we can choose a unitary isomorphism \(E^{\perp}\to C_{E}^{\perp}\), and complete \(\jmath\) on the orthogonal complement, obtaining the desired \(J\). Consider then the Wasserstein group
\[H:=L^{0}\left(\left[0,1\right],U\left(E\right)\right),\]
and by \cref{lem:L0unitemb} we have the continuous homomorphism
\[\iota:H\to U\left(\mathcal{H}\right),\quad\iota\left(\phi\right):=J^{-1}M_{\phi}J.\]
We finally claim that \(\iota\left(H\right)\cap O\neq\emptyset\). Let \(\phi\in H\) be the constant function \(t\mapsto v\). For every \(1\leq j\leq k\), since \(\xi_{j}\in F\subseteq E\), \(J\xi_{j}\in L^{2}\left(\left[0,1\right],E\right)\) is the constant function \(t\mapsto\xi_{j}\). Therefore, \(M_{\phi}J\xi_{j}\) is the constant function \(t\mapsto v\xi_{j}=u\xi_{j}\). Since \(u\xi_{j}\in uF\subseteq E\) and \(J\) sends \(u\xi_{j}\) to the constant function \(t\mapsto u\xi_{j}\), we get
\[\iota\left(\phi\right)\xi_{j}=J^{-1}M_{\phi}J\xi_{j}=u\xi_{j},\quad1\leq j\leq k.\]
Thus \(\iota\left(\phi\right)\in W\subseteq O\), and so \(\iota\left(H\right)\cap O\neq\emptyset\).

\subsection{The Cameron--Martin affine group}

Let \(\mathcal{H}\) be an infinite dimensional separable real Hilbert space. Let the Cameron--Martin affine group
\[\mathcal{H}\rtimes O\left(\mathcal{H}\right),\]
viewed as the group of affine isometries of \(\mathcal{H}\), with multiplication and action given by
\[\left(h,U\right)\left(k,V\right):=\left(h+Uk,UV\right),\quad \left(h,U\right).x:=h+Ux.\]
We show that \(\mathcal{H}\rtimes O\left(\mathcal{H}\right)\) is locally Wasserstein. For every \(c\in\mathcal{H}\), we have a natural isomorphism
\[G_{c}:=\left\{\left(c-Uc,U\right):U\in O\left(\mathcal{H}\right)\right\}\cong O\left(\mathcal{H}\right),\]
identifying the stabilizer of \(c\) with \(O\left(\mathcal{H}\right)\) via \(U\mapsto\left(c-Uc,U\right)\). Hence, \(G_{c}\) is locally Wasserstein.

\smallskip

We now use the second formulation in \cref{def:appWass}. First observe that every affine isometry belongs to a product of two such stabilizers. Indeed, for \(\xi\in\mathcal{H}\), the element \(s_{\xi}:=\left(2\xi,-I\right)\) fixes \(\xi\), and hence \(s_{\xi}\in G_{\xi}\). Therefore, for every \(\left(h,U\right)\in\mathcal{H}\rtimes O\left(\mathcal{H}\right)\) we have
\[\left(h,U\right)=s_{h/2}\left(0,-U\right)\in G_{h/2}G_{0}.\]
Let \(O\subseteq\mathcal{H}\rtimes O\left(\mathcal{H}\right)\) be a nonempty open set, and choose \(\left(h,U\right)\in O\). Find open neighborhoods \(s_{h/2}\in O_{1}\subseteq G_{h/2}\) and \(\left(0,-U\right)\in O_{0}\subseteq G_{0}\), in the subspace topologies, with \(O_{1}O_{0}\subseteq O\). Since \(G_{h/2}\) and \(G_{0}\) are locally Wasserstein, each of \(O_{1}\) and \(O_{0}\) meets a finite product of continuous images of Wasserstein groups. Therefore, \(O\) itself meets a finite product of continuous images of Wasserstein groups.

\smallskip

The Cameron--Martin affine group provides the nonsingular analogue of the Gaussian action considered by Glasner--Tsirelson--Weiss \cite[\S4]{glasner2005automorphism}:

\begin{exm}[Nonsingular Gaussian near action]\label{exm:CamMar}
Let \(\mathcal{H}=\ell_{\mathbb{R}}^{2}\). Consider the product space \(\mathbb{R}^{\infty}\) with its coordinate mappings \(\zeta_{j}:\mathbb{R}^{\infty}\to\mathbb{R}\), \(j\geq 1\). Let \(\gamma^{\infty}\) be the product of countably many copies of the standard Gaussian measure \(\gamma\) on \(\mathbb{R}\). There is a standard construction of Gaussian process \(\left(W_{h}:h\in\mathcal{H}\right)\) with coordinates in \(L^{2}\left(\gamma^{\infty}\right)\), that is determined by the covariances
\[\mathbb{E}_{\gamma^{\infty}}\left[W_{h}W_{k}\right]=\left\langle h,k\right\rangle_{\mathcal{H}},\quad h,k\in\mathcal{H}.\]
Then \(h\mapsto W_{h}\) identifies \(\mathcal{H}\) isometrically with the first chaos of \(L^{2}\left(\gamma^{\infty}\right)\), that is the closed linear span of \(\{\zeta_{j}:j\geq 1\}\); see \cite[Chs.~1--2]{Janson1997}. The usual probability preserving Gaussian near action of \(O\left(\mathcal{H}\right)\), considered by Glasner--Tsirelson--Weiss, is determined by the corresponding action on the first chaos: for \(U\in O\left(\mathcal{H}\right)\), let \(R_{U}\in\mathrm{Aut}\left(\mathbb{R}^{\infty},\gamma^{\infty}\right)\) be the automorphism class defined by
\[W_{h}\circ R_{U}=W_{U^{-1}h},\quad h\in\mathcal{H}.\]
This is the near action of the automorphism group of the Gaussian measure in \cite[\S4]{glasner2005automorphism}.

\smallskip

To define its nonsingular analog, for \(h\in\mathcal{H}\) let \(\alpha_{h}\) denote the Borel transformation of \(\mathbb{R}^{\infty}\) given by \(\alpha_{h}\left(x\right)=x+h\) with addition coordinate-wise. By the Cameron--Martin theorem, \(\alpha_{h}\) is nonsingular and
\[\frac{d\alpha_{h\ast}\gamma^{\infty}}{d\gamma^{\infty}}=\exp\big(W_{h}-\left\Vert h\right\Vert^{2}/2\big);\]
see \cite[Ch.~2, \S4]{Bogachev1998}. Therefore, the formula
\[\tau_{\left(h,U\right)}:=\alpha_{h}\circ R_{U},\quad \left(h,U\right)\in\mathcal{H}\rtimes O\left(\mathcal{H}\right),\]
defines a nonsingular near action \(\tau:\mathcal{H}\rtimes O\left(\mathcal{H}\right)\to\mathrm{Aut}^{\ast}\left(\mathbb{R}^{\infty},\gamma^{\infty}\right)\) of the Cameron--Martin affine group.
\end{exm}

\subsection{The isometry group of the Urysohn space}

Let \(\mathbb{U}\) be the Urysohn metric space, which is by definition the ultrahomogeneous universal complete separable metric space; see \cite[Ch.~5]{Pestov2006}. It is a Polish space, hence its isometry group \(\operatorname{Iso}\left(\mathbb{U}\right)\) is a Polish group with the pointwise convergence topology (see \cite[Prop.~5.2.1]{Pestov2006}). Vershik proved that \(\operatorname{Iso}\left(\mathbb{U}\right)\) contains a dense locally finite subgroup \cite[Thm.~5.3.8]{Pestov2006}, and Pestov upgraded this by showing that \(\mathrm{Iso}\left(\mathbb{U}\right)\) is a L\'{e}vy group using a skeleton of finite groups of isometries \cite[Thm.~5.3.10]{Pestov2006}, \cite{pestov2007isometry}. In the following we will show that \(\mathrm{Iso}\left(\mathbb{U}\right)\) is locally Wasserstein, using a result of Melleray--Tsankov.\footnote{The first arXiv version of this paper contained another proof, using Uspenskij’s \(g\)-embedding theorem \cite[Prop.~5.2.7]{Pestov2006}.} For the rest of this discussion set
\[G:=\operatorname{Iso}\left(\mathbb{U}\right).\]

\smallskip

Let \(\operatorname{Hom}\left(\mathbb{Z},G\right)\) be the space of homomorphisms \(\mathbb{Z}\to G\), equipped with the product Polish topology inherited from \(G^{\mathbb{Z}}\). By \cite[Lem.~6.2(i)]{melleray2013generic} applied to \(\mathbb{Z}\), the set
\[\big\{\pi\in\operatorname{Hom}\left(\mathbb{Z},G\right):\overline{\pi\left(\mathbb{Z}\right)}\cong L^{0}\left(\left[0,1\right],S^{1}\right)\big\}\]
is dense in \(\operatorname{Hom}\left(\mathbb{Z},G\right)\). Since the evaluation map \(\operatorname{Hom}\left(\mathbb{Z},G\right)\to G\), \(\pi\mapsto\pi\left(1\right)\), forms a homeomorphism of Polish spaces, this means that the set
\[\big\{g\in G:\overline{\left\{g^{n}:n\in\mathbb{Z}\right\}}\cong L^{0}\left(\left[0,1\right],S^{1}\right)\big\}\]
is dense in \(G\). Thus every nonempty open subset of \(G\) intersects a closed subgroup isomorphic to \(L^{0}\left(\left[0,1\right],S^{1}\right)\). By the second formulation in \cref{def:appWass}, \(G=\operatorname{Iso}\left(\mathbb{U}\right)\) is locally Wasserstein.

\subsection{General full groups}\label{sct:fullnonamen}

Let \(\mathcal{R}\) be a nonsingular countable Borel equivalence relation on a standard nonatomic probability space \(\left(X,\mu\right)\), and recall its full group \(\left[\mathcal{R}\right]\) as in \cref{sct:fullamen}. Here we no longer assume that \(\mathcal{R}\) is amenable, neither that it is ergodic. Therefore, according to \cite[Thm.~5.7]{giordano2007some}, \(\left[\mathcal{R}\right]\) may no longer be a L\'{e}vy group, and still we prove that it is a locally Wasserstein group.

\smallskip

To this end we use the three involution factorization of full groups due to Ryzhikov~\cite[p.~823]{ryzhikov1993factorization} (see also \cite[Prop.~2.6,\,Thm.~5.8]{miller2004full}), by which every \(T\in\left[\mathcal{R}\right]\) can be written as the composition \(T=O_{1}O_{2}O_{3}\) for \(O_{i}\in\left[\mathcal{R}\right]\) with \(O_{i}^{2}=\mathrm{Id}\), \(i=1,2,3\). Consequently, using the second formulation in \cref{def:appWass}, in order to show that \(\left[\mathcal{R}\right]\) is locally Wasserstein it suffices to show that for every involution \(O\in\left[\mathcal{R}\right]\) there exists a Wasserstein group \(H\) and a continuous homomorphism \(\iota:H\to\left[\mathcal{R}\right]\) such that \(O\in\operatorname{Im}\iota\).

\smallskip

Fix an involution \(\operatorname{Id}\neq O\in\left[\mathcal{R}\right]\), and find a Borel set \(B\subset X\) such that \(\operatorname{supp}\left(O\right)=B\sqcup O\left(B\right)\) (see \cite[Lem.~2.12]{hoareau2025polish}). Define a probability measure on \(B\) by
\[\beta\left(\cdot\right):=\frac{\mu\left(\cdot\right)+\mu\left(O\left(\cdot\right)\right)}{\mu\left(\mathrm{supp}\left(O\right)\right)},\]
and consider the Wasserstein group
\[H:=L^{0}\left(\left(B,\beta\right),\mathbb{Z}/2\mathbb{Z}\right).\]
Note that \(H\) is nothing but the measure algebra of \(\left(B,\beta\right)\): an element of \(H\) is identified with a Borel set \(C\subseteq B\) modulo \(\beta\)-null sets, and the product in the measure algebra, symmetric difference, corresponds to the sum modulo \(2\) in \(H\), in that \(\mathbf{1}_{C\triangle C'}=\mathbf{1}_{C}+\mathbf{1}_{C'}\mod 2\). Define
\[\iota:H\to\left[\mathcal{R}\right],\quad\iota\left(C\right)\left(x\right)=\begin{cases}
O\left(x\right) & x\in C\sqcup O\left(C\right)\\
x & \text{otherwise}
\end{cases}.\]
This is the flip by the involution \(O\) on the partial domain \(C\sqcup O\left(C\right)\). One verifies that
\[\iota\left(C\triangle C'\right)=\iota\left(C\right)\iota\left(C'\right),\]
showing that \(\iota\) is a group homomorphism, and that the uniform metric satisfies
\[d\left(\iota\left(C\right),\iota\left(C'\right)\right):=\mu\left(\iota\left(C\right)\neq\iota\left(C'\right)\right)=\mu\left(\left(C\triangle C'\right)\sqcup O\left(C\triangle C'\right)\right)=\mu\left(\operatorname{supp}\left(O\right)\right)\cdot\beta\left(C\triangle C'\right),\]
showing that \(\iota\) is continuous. Finally, we have that \(\iota\left(B\right)=O\), and so \(O\in\operatorname{Im}\iota\). This shows that \(\left[\mathcal{R}\right]\) is locally Wasserstein. Combining \cite[Thm.~5.7]{giordano2007some} with the above argument, we obtain the following:

\begin{cor}\label{cor:locWassnonLev}
Let \(\mathcal{R}\) be an ergodic nonsingular nonamenable countable Borel equivalence relation on a standard nonatomic probability space; e.g. the orbit equivalence relation of the Bernoulli shift
\[F_{2}\curvearrowright\big(\left\{ 0,1\right\}^{F_{2}},\bigotimes\nolimits_{F_{2}}\left(1/2,1/2\right)\big)\]
of the free group. Then \(\left[\mathcal{R}\right]\) is a locally Wasserstein group which is not a L\'{e}vy group.
\end{cor}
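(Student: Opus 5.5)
The corollary has two parts, and I will prove them separately: first that \(\left[\mathcal{R}\right]\) is locally Wasserstein, then that it is not a L\'{e}vy group.

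\emph{Locally Wasserstein.} This part is already covered by the argument of \cref{sct:fullnonamen}, which assumes neither amenability nor ergodicity. I will recall its chain of reasoning. By Ryzhikov's three-involution factorization, every \(T\in\left[\mathcal{R}\right]\) is a product \(O_{1}O_{2}O_{3}\) of involutions. Each involution \(O\) lies in the image of a continuous homomorphism \(\iota:L^{0}\left(\left(B,\beta\right),\mathbb{Z}/2\mathbb{Z}\right)\to\left[\mathcal{R}\right]\). The source group is Wasserstein, by the \(L^{0}\)-computation of \cref{app:Wass} applied with the compact target \(\mathbb{Z}/2\mathbb{Z}\); the measure \(\beta\) is a nonatomic probability measure on \(B\), because \(\mu\) is nonatomic. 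Hence \(\left[\mathcal{R}\right]\) is a product \(\iota_{1}(H_{1})\iota_{2}(H_{2})\iota_{3}(H_{3})\), so it is generated by these images. The second formulation of \cref{def:appWass} is then satisfied trivially: every open set meets such a product, since the product is the whole group.

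One degenerate case needs a remark. If some \(O_{i}\) is the identity, its factor can be dropped. Alternatively, the identity is the image of the trivial element of any such \(H\).

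\emph{Not L\'{e}vy.} Here I will invoke Giordano--Pestov \cite[Thm.~5.7]{giordano2007some}. For an ergodic nonsingular countable Borel equivalence relation, \(\left[\mathcal{R}\right]\) with the uniform topology is L\'{e}vy if and only if \(\mathcal{R}\) is amenable. The main point to check is that the hypotheses of that theorem match our setting. This is the step I expect to be the main obstacle, as it is bookkeeping about hypotheses rather than new mathematics. I would check three things: that the topology used there is the uniform topology, i.e.\ the one we use on \(\left[\mathcal{R}\right]\); that ergodicity is assumed there; and that the nonsingular (not only measure-preserving) case is included. Given all this, nonamenability of \(\mathcal{R}\) yields that \(\left[\mathcal{R}\right]\) is not L\'{e}vy.

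\emph{The example.} Finally I will verify that the Bernoulli shift of \(F_{2}\) meets the hypotheses. The product measure is nonatomic and preserved, hence nonsingular. The action is free almost everywhere, because for each \(g\neq e\) the set of points fixed by \(g\) has measure zero. It is ergodic, indeed mixing. Its orbit equivalence relation is nonamenable: for a free probability-preserving action, amenability of the orbit relation is equivalent to amenability of the acting group, and \(F_{2}\) is nonamenable.
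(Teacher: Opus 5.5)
Your proposal is correct and follows the paper's own proof. The locally Wasserstein part is the general full-group argument of \cref{sct:fullnonamen}: Ryzhikov's three-involution factorization together with the continuous homomorphisms \(\iota: L^{0}((B,\beta),\mathbb{Z}/2\mathbb{Z})\to[\mathcal{R}]\). The failure of the L\'{e}vy property is exactly \cite[Thm.~5.7]{giordano2007some}.

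One wording point: \([\mathcal{R}]\) is not literally a single product \(\iota_{1}(H_{1})\iota_{2}(H_{2})\iota_{3}(H_{3})\), since the \(H_{i}\) and \(\iota_{i}\) depend on \(T\). What holds is that each \(T\) lies in such a product, and that is all the second formulation of \cref{def:appWass} requires.
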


\section{A geometric obstruction to local compactness}\label[appendix]{app:geomlcsc}

Let \(G\) be a Polish group with a right-invariant compatible metric \(d\), and suppose it satisfies the condition of \cref{mthm2}: there is a sequence \(K_{1}\leq K_{2}\leq\dotsm\leq G\) of compact subgroups with dense union, each \(K_{n}\) admits a finite chain of compact subgroups
\[\{e\}=H_{n,0}\leq H_{n,1}\leq\dotsm\leq H_{n,M_{n}}=K_{n},\]
such that the quotient radii \(\rho_{n,i}=\sup_{g\in H_{n,i}}d\left(e,gH_{n,i-1}\right)\) satisfy
\[L:=\sup\nolimits_{n\geq 1}\sum\nolimits_{i=1}^{M_{n}}\rho_{n,i}<+\infty\quad\text{and}\quad\varrho_{n}:=\max_{1\leq i\leq M_{n}}\rho_{n,i}\to 0\text{ as }n\to\infty.\]

The conclusion of \cref{mthm2} says that \(G\) is a Wasserstein group and in particular, by \cref{mthm1}, it cannot be locally compact, unless it is the trivial group. We give a more direct argument to this conclusion. Thus, assume in addition that \(G\) is locally compact and we argue that \(G\) is trivial.

\begin{claim}
\(G\) is compact.
\end{claim}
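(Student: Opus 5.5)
The plan is to show that, once $n$ is large, every $K_{n}$ sits inside one fixed compact set. Since $\bigcup_{n}K_{n}$ is dense, this forces $G$ to be compact.

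\emph{Step 1: coset representatives give a factorization.} Fix $n$ and $g\in K_{n}=H_{n,M_{n}}$. As in the proof of \cref{lem:martcont} (or directly from the definition of $\rho_{n,i}$ and compactness of $H_{n,i-1}$), choose $s_{M_{n}}\in gH_{n,M_{n}-1}$ with $d(e,s_{M_{n}})\leq\rho_{n,M_{n}}$. Then write $g=s_{M_{n}}g'$ with $g'\in H_{n,M_{n}-1}$ and iterate down the chain. This gives
\[g=s_{M_{n}}s_{M_{n}-1}\dotsm s_{1},\qquad s_{i}\in H_{n,i},\quad d(e,s_{i})\leq\rho_{n,i}.\]
The chain starts at $\{e\}$, so the process ends after finitely many steps. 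The basic estimate comes from right-invariance: $d(e,ab)\leq d(e,b)+d(b,ab)=d(e,b)+d(e,a)$. Hence for any consecutive block $s_{j}\dotsm s_{i}$,
\[d(e,s_{j}\dotsm s_{i})\leq\sum\nolimits_{t=i}^{j}\rho_{n,t}.\]

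\emph{Step 2: group the factors into a bounded number of blocks.} Use local compactness to fix $r>0$ such that the closed ball $V:=\overline{B}_{d}(r)$ is compact. Take $n$ large enough that $\varrho_{n}\leq r/2$. Greedily cut the sequence $\rho_{n,1},\dotsc,\rho_{n,M_{n}}$ into consecutive blocks, each with sum at most $r$ and, except the last, sum greater than $r/2$. This is possible because every single term is at most $r/2$. The number of blocks is then at most $k:=\lfloor 2L/r\rfloor+1$, which does not depend on $n$. By Step 1 each block product lies in $V$. Hence $g\in V^{k}$, using $e\in V$ to pad with identities. So $K_{n}\subseteq V^{k}$ for all large $n$.

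\emph{Step 3: conclude.} Since the $K_{n}$ increase, $\bigcup_{n}K_{n}\subseteq V^{k}$. The set $V^{k}$ is compact, being the continuous image of $V^{k}$ under multiplication. It is therefore closed, and density of $\bigcup_{n}K_{n}$ gives $G=V^{k}$, so $G$ is compact.

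The key point, and the only place the hypotheses really interact, is Step 2. One cannot simply use the bound $d(e,g)\leq L$, because balls of radius $L$ in a compatible metric need not be relatively compact; think of a truncated metric. The condition $\varrho_{n}\to0$ is exactly what allows the long product to be chopped into uniformly boundedly many pieces, each inside the fixed compact ball of radius $r$. The finiteness of the chain starting at $\{e\}$ guarantees that the factorization in Step 1 actually terminates.
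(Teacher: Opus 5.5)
Your proof is correct and follows essentially the same route as the paper: you factor $g\in K_{n}$ as $s_{M_{n}}\cdots s_{1}$ with $d(e,s_{i})\leq\rho_{n,i}$, then group the factors into a number of blocks bounded independently of $n$, each with sum at most the radius of a fixed compact ball (possible since $\varrho_{n}\to0$), and conclude $K_{n}\subseteq V^{k}$ for large $n$, hence $G=V^{k}$ by density. Your explicit greedy cutting, where every block except the last has sum exceeding $r/2$, spells out the block count that the paper states more briefly.
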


Using that \(G\) is locally compact, choose \(\epsilon>0\) such that \(C:=\overline{B}_{d}\left(e,\epsilon\right)\) is compact. Let \(M:=\left\lceil 2L/\epsilon\right\rceil+1\), and choose \(n_{o}\geq 1\) such that \(\varrho_{n}<\epsilon/2\) for all \(n\geq n_{o}\). Divide the sequence \(M_{n},\dotsc,1\) into successive blocks \(b,\dotsc,a\) subject to the condition \(\rho_{n,b}+\dotsm+\rho_{n,a}\leq\epsilon\). There are at most \(M\) such blocks: otherwise, since each of the first \(M\) blocks contributes more than \(\epsilon/2\), the total sum is \(M\cdot\epsilon/2>L\), which is impossible.

\smallskip

Fix \(n\geq n_{o}\), and we show that \(K_{n}\subseteq C^{M}\). First, an arbitrary \(g\in K_{n}\) can be written as \(g=s_{M_{n}}\dotsm s_{1}\) with \(d\left(e,s_{i}\right)\leq\rho_{n,i}\) for \(1\leq i\leq M_{n}\). To do this, start with \(g_{M_{n}}:=g\). Since \(g_{M_{n}}\in H_{n,M_{n}}\) is \(\rho_{n,M_{n}}\)-distant from \(H_{n,M_{n}-1}\), there is \(h\in H_{n,M_{n}-1}\) with \(d\left(e,g_{M_{n}}h\right)\leq\rho_{n,M_{n}}\). Set \(s_{M_{n}}=g_{M_{n}}h\) and \(g_{M_{n}-1}:=h^{-1}\). Proceeding inductively, gives that \(g=s_{M_{n}}\dotsm s_{1}\) as required. Now for every block \(b,\dotsc,a\) within \(M_{n},\dotsc,1\) in the above division, by right-invariance and the triangle inequality
\[d\left(e,s_{b}\dotsm s_{a}\right)\leq d\left(e,s_{b}\right)+\dotsm+d\left(e,s_{a}\right)
\leq \rho_{n,b}+\dotsm+\rho_{n,a}\leq\epsilon,\]
which means that \(s_{b}\dotsm s_{a}\in C\). This shows that \(g\in C^{M}\), and so \(K_{n}\subseteq C^{M}\) for all \(n\geq n_{o}\). Since the \(K_{n}\)'s are increasing with dense union, it follows that \(G\subseteq C^{M}\), and hence \(G\) is compact.

\begin{claim}
\(m_{n}\Rightarrow m\) weakly, where \(m_{n}\) and \(m\) are the Haar probability measures of \(K_{n}\) and \(G\).
\end{claim}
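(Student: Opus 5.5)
We will prove this by compactness of \(P(G)\) together with uniqueness of Haar measure. By the previous claim \(G\) is compact metrizable, so the space \(P(G)\) of Borel probability measures with the weak topology is compact. Each \(m_{n}\) is supported on \(K_{n}\subseteq G\), so we regard it as an element of \(P(G)\). It therefore suffices to show that every weak limit point \(\lambda\) of \((m_{n})_{n\geq1}\) equals the Haar measure \(m\). Then every subsequence has a further subsequence converging to \(m\), and hence \(m_{n}\Rightarrow m\).

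So fix a subsequence \(m_{n_{k}}\Rightarrow\lambda\). First, \(\lambda\) is left \(\mathcal{K}\)-invariant, where \(\mathcal{K}=\bigcup_{n}K_{n}\). For \(h\in\mathcal{K}\), say \(h\in K_{N}\), and every \(n\geq N\), left invariance of the Haar measure \(m_{n}\) of \(K_{n}\) gives \(h_{\ast}m_{n}=m_{n}\). Left translation \(x\mapsto hx\) is continuous on \(G\), so \(h_{\ast}\) is weakly continuous on \(P(G)\). Passing to the limit along \(n_{k}\) gives \(h_{\ast}\lambda=\lambda\).

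Next we pass from \(\mathcal{K}\) to \(G\). The map \(G\times P(G)\to P(G)\), \((g,\nu)\mapsto g_{\ast}\nu\), is continuous; this is exactly the argument recalled before \cref{clm:weakconv}, applied to the compact \(G\)-space \(X=G\) with the left translation action. Hence the stabilizer of \(\lambda\) is a closed subgroup of \(G\). It contains the dense subgroup \(\mathcal{K}\), so it is all of \(G\). Thus \(\lambda\) is a left-invariant Borel probability measure on the compact group \(G\), and by uniqueness of Haar measure \(\lambda=m\).

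There is no real obstacle here. The only point requiring care is that weak convergence of \(m_{n}\) must be taken in \(P(G)\), which is compact, rather than in \(P(K_{n})\). Compactness of \(G\) is precisely what the previous claim supplies, and it is what makes the limit points exist and be probability measures, with no mass escaping.
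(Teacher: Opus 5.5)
Your proof is correct and follows the same route as the paper: relative compactness of \(\{m_{n}\}\) in the space of probability measures on the compact group \(G\), invariance of every cluster point under \(\bigcup_{n\geq1}K_{n}\), extension of that invariance to all of \(G\) by density, and uniqueness of Haar measure. You simply make explicit the details the paper leaves implicit, namely the weak continuity of \(h_{\ast}\), the closedness of the stabilizer, and the subsequence argument.
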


Since \(G\) is compact, the family \(\left\{ m_{n}:n\geq1\right\}\) is relatively compact in the weak topology on the space of probability measures on \(G\). Every cluster point is \(\bigcup_{n\geq1}K_{n}\)-invariant, and by density it also \(G\)-invariant, so it is \(m\). Since this holds for an arbitrary cluster point, the claim follows.

\begin{claim}
Every \(1\)-Lipschitz function \(f:G\to\mathbb{R}\) is constant, and so \(G\) is trivial.
\end{claim}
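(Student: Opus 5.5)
We are in the situation of the two previous claims: \(G\) is compact, its Haar probability measure is \(m\), and \(m_{n}\Rightarrow m\). The plan is to show that the Haar measures of the \(K_{n}\) concentrate, with an explicit rate coming from the martingale chains, and then to pass this concentration to the limit \(m\). Concentration under a measure with full support will force every \(1\)-Lipschitz function to be constant.

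Fix a \(1\)-Lipschitz \(f:G\to\mathbb{R}\), and apply the martingale decomposition from the proof of \cref{lem:martcont} to \(f\mid_{K_{n}}\) with the chain \(H_{n,0}\leq\dotsm\leq H_{n,M_{n}}\). We obtain
\[f-\int_{K_{n}}f\,dm_{n}=\sum\nolimits_{i=1}^{M_{n}}\Delta_{i}f,\qquad\left\|\Delta_{i}f\right\|_{\infty}\leq\rho_{n,i}.\]
The increments are orthogonal in \(L^{2}\left(m_{n}\right)\), which is the cross-term vanishing argument used for \eqref{eq:MTID}. Hence
\[\operatorname{Var}_{m_{n}}\left(f\right)=\sum\nolimits_{i}\left\|\Delta_{i}f\right\|_{L^{2}\left(m_{n}\right)}^{2}\leq\sum\nolimits_{i}\rho_{n,i}^{2}\leq\varrho_{n}\sum\nolimits_{i}\rho_{n,i}\leq L\varrho_{n}\to0.\]
This is the key step. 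Only the \(\ell^{2}\)-sum is needed here, and it is controlled by the product of the \(\ell^{1}\)-sum and the scale. Alternatively one could apply \cref{prop:MSS} to the density constructed in \cref{lem:setToDensity}, but the variance route is more direct.

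Next we pass to the limit. Since \(f\) and \(f^{2}\) are continuous on the compact group \(G\), the weak convergence \(m_{n}\Rightarrow m\) gives
\[\operatorname{Var}_{m}\left(f\right)=\lim_{n\to\infty}\operatorname{Var}_{m_{n}}\left(f\right)=0.\]
So \(f\) is \(m\)-a.e. constant. Haar measure on a compact group has full support, because every nonempty open set has positive measure, and \(f\) is continuous. Therefore \(f\) is constant everywhere.

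Finally, apply this to \(f=d\left(e,\cdot\right)\), which is \(1\)-Lipschitz. Since \(f\) is constant and \(f\left(e\right)=0\), we get \(d\left(e,g\right)=0\) for every \(g\in G\). Hence \(G=\{e\}\).

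The only delicate point is the variance bound. It requires the \(\|\cdot\|_{\infty}\) estimate on the increments from \cref{lem:martcont}, which is valid a.e. and depends on the restriction of \(d\) to \(K_{n}\), together with the inequality \(\sum_{i}\rho_{n,i}^{2}\leq\varrho_{n}\sum_{i}\rho_{n,i}\). Both are routine.
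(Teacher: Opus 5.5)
Your proposal is correct and follows essentially the same route as the paper. Both arguments bound \(\operatorname{Var}_{m_{n}}(f)\) by \(\sum_{i}\rho_{n,i}^{2}\leq L\varrho_{n}\) using orthogonal martingale increments with the \(L^{\infty}\) bounds from \cref{lem:martcont}, pass to the limit via \(m_{n}\Rightarrow m\) applied to \(f\) and \(f^{2}\), and conclude with \(f=d(e,\cdot)\). Your explicit appeal to the full support of Haar measure just spells out a step the paper leaves implicit.
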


Fix \(n\geq1\). For \(0\leq i\leq M_{n}\), let \(\mathcal{I}_{n,i}\) be the sub-\(\sigma\)-algebra of left \(H_{n,i}\)-invariant sets, and define
\[f_{n,i}:K_{n}\to\mathbb{R},\quad f_{n,i}:=\mathbb{E}_{m_{n}}\left[f\mid_{K_{n}}\mid\mathcal{I}_{n,i}\right].\]
Then \(f_{n,0}=f\mid_{K_{n}}\) and \(f_{n,M_{n}}=\mathbb{E}_{m_{n}}\left[f\right]\). Consider the martingale differences \(\Delta_{n,i}:=f_{n,i-1}-f_{n,i}\) for \(1\leq i\leq M_{n}\). As in the proof of \cref{lem:martcont}, these martingale differences are orthogonal in \(L^{2}\left(m_{n}\right)\). Additionally, by the first part of \cref{lem:martcont} applied to \(f\mid_{K_{n}}\),
\[\left\Vert\Delta_{n,i}\right\Vert_{L^{\infty}\left(m_{n}\right)}=\left\Vert f_{n,i-1}-f_{n,i}\right\Vert_{L^{\infty}\left(m_{n}\right)}
\leq\rho_{n,i},\quad1\leq i\leq M_{n}.\]
Then we obtain
\[\left\Vert f\mid_{K_{n}}-\mathbb{E}_{m_{n}}\left[f\right]\right\Vert_{L^{2}\left(m_{n}\right)}^{2}=\left\Vert f_{n,0}-f_{n,M_{n}}\right\Vert_{L^{2}\left(m_{n}\right)}^{2}=\sum\nolimits_{i=1}^{M_{n}}\left\Vert\Delta_{n,i}\right\Vert_{L^{2}\left(m_{n}\right)}^{2}\leq\sum\nolimits_{i=1}^{M_{n}}\rho_{n,i}^{2}\leq\varrho_{n}L\to 0\text{ as }n\to\infty.\]
Since \(m_{n}\Rightarrow m\) while \(f\) and \(f^{2}\) are continuous bounded, it follows that \(\left\Vert f-\mathbb{E}_{m}\left[f\right]\right\Vert_{L^{2}\left(m\right)}^{2}=0\) (since in general \(\left\Vert \phi-\mathbb{E}\left[\phi\right]\right\Vert _{2}^{2}=\mathbb{E}\left[\phi^{2}\right]-\mathbb{E}\left[\phi\right]^{2}\)). Thus an arbitrary \(1\)-Lipschitz function \(f\) is constant \(m\)-a.e. Finally, since in particular the \(1\)-Lipschitz function \(f\left(\cdot\right)=d\left(e,\cdot\right)\) is \(m\)-a.e. constant, \(G\) is trivial. 

\section*{Acknowledgments}

The authors are indebted to Zemer Kosloff for his advice and constant support, and to Emanuel Milman and Yair Shenfeld for sharing with us from their expertise. The authors are grateful to Fabien Hoareau and Emmanuel Roy, who independently suggested \cref{cor:locWassnonLev} to us.

N. A--R is thankful to Michael Bj\"{o}rklund for interesting conversations around the topics of this work. He also owes a great deal to Emmanuel Roy for introducing him to the work of Glasner--Tsirelson--Weiss, as well as for many insightful discussions on the Borel lifting problem.

This work is part of the Ph.D thesis of G.P. conducted at the Hebrew University under the supervision of Or Landesberg. G.P. would like to thank Or for his support and encouragement.

\bibliographystyle{acm}
\bibliography{References}

\end{document}